\newtheorem{theorem}{Theorem}[section]
\newtheorem{lemma}[theorem]{Lemma}
\newtheorem{corollary}[theorem]{Corollary}
\newtheorem{definition}[theorem]{Definition}
\newtheorem{remark}[theorem]{\it Remark}
\newtheorem{example}[theorem]{Example}
\newtheorem{proposition}[theorem]{Proposition}
\def\Gr{\mathrm{Gr}}
\def\GL{\mathrm{GL}}
\def\SL{\mathrm{SL}}
\def\SU{\mathrm{SU}}
\def\U{\mathrm{U}}
\def\t{\mathrm}
\def\C{\mathbb{C}}
\def\R{\mathbb{R}}
\def\Z{\mathbb{Z}}
\def\Z{\mathbb{Z}}
\def\tree{\mathcal{T}}
\def\q{/\!\!/}
\def\qb{\! \big/\hspace{-.15cm}\big/}
\def\ql{\backslash \!\! \backslash}
\def\qlb{\hspace{.05cm} \big\backslash \hspace{-.3cm} \big\backslash}
\def\hc{\operatorname{hc}}
\def\sc{\operatorname{sc}}
\def\Proj{\mathrm{Proj} \,}
\newcommand{\Spec}{\operatorname{Spec}}
\newcommand{\End}{\operatorname{End}}
\newcommand{\As}{\operatorname{As}}
\newcommand{\Genh}{G_{\operatorname{enh}}}
\newcommand{\Kenh}{K_{\operatorname{enh}}}
\newcommand{\Aenh}{A_{G_{\operatorname{enh}}}}
\newcommand{\A}{\mathbb{A}}
\newcommand{\abs}{\operatorname{abs}}
\newcommand{\an}{\operatorname{an}}
\newcommand{\alg}{\operatorname{alg}}
\newcommand{\gr}{\operatorname{gr}}
\renewcommand{\Re}{\mathfrak{R}\!\operatorname{e}}
\renewcommand{\Im}{\mathfrak{I}\!\operatorname{m}}
\newcommand{\Hom}{\operatorname{Hom}}
\newcommand{\Tr}{\operatorname{Tr}}
\newcommand{\Sym}{\operatorname{Sym}}
\newcommand{\Id}{\operatorname{Id}}
\title{Contraction of Hamiltonian $K$-spaces}
\author{Joachim Hilgert} 
\address[Joachim Hilgert]{Institut f\"ur Mathematik, 
 Universit\"at Paderborn,
33095 Paderborn, Germany}
\email{hilgert@math.uni-paderborn.de}
\author{Christopher Manon}
\address[Christopher Manon]{Department of Mathematics,
George Mason University,
Fairfax, VA 22030, USA}
\email{cmanon@gmu.edu}
\author{Johan Martens}
\address[Johan Martens]{School of Mathematics and Maxwell Institute, The University of Edinburgh, Peter Guthrie Tait Road, Edinburgh EH9 3FD, United Kingdom}
\email{johan.martens@ed.ac.uk}
\date{\today}
\thanks{}
\begin{document}

\begin{abstract}
In the spirit of recent work of Harada-Kaveh and Nishinou-Nohara-Ueda, we study the symplectic
geometry of Popov's horospherical degenerations of complex algebraic varieties with the action of a complex linearly reductive group. We formulate an intrinsic symplectic contraction of a Hamiltonian space, which is a surjective, continuous map onto a new Hamiltonian space that is a symplectomorphism on an explicitly defined dense open subspace.  This map is given by a precise formula, using techniques from the theory of symplectic reduction and symplectic implosion. 
We then show, using the Vinberg monoid, that the gradient-Hamiltonian flow for a horospherical degeneration of an algebraic variety gives rise to this contraction from a general fiber to the special fiber. We apply this construction to branching problems in representation theory, and finally we show how the Gel'fand-Tsetlin integrable system can be understood to arise this way.
\end{abstract}

\maketitle

\tableofcontents

\section{Introduction}

Flat degeneration is a powerful tool for studying algebraic varieties.  Heuristically, this method finds success because many constructions of flat degenerations, e.g. Gr\"obner bases and SAGBI (subalgebra analog to Gr\"obner basis for ideals) bases, furnish the special fiber with an action by an algebraic torus, opening the door to the world of convex bodies, and methods from combinatorics.    In recent work of Harada-Kaveh \cite{HK} and Nishinou-Nohara-Ueda \cite{NNU}, flat degenerations are also effectively applied to questions of a symplectic nature.  Given a smooth, projective variety $X \subset \mathbb{P}(M)$, with symplectic form induced from the Fubini-Study K\"ahler form on the ambient space, and a flat family $\mathcal{X} \subset \mathbb{P}(M) \times \C$, with $\pi: \mathcal{X} \to \C$, $\pi^{-1}(c) \cong X, c \neq 0$ and $\pi^{-1}(0) = X_P \subset \mathbb{P}(M)$ a toric variety with an action of a torus $T_P$ with momentum polytope $P$, Harada and Kaveh show the following: 

\begin{enumerate}
\item there exists a continuous, surjective map $\Phi: X \to X_P$;\\
\item there exists a dense, open subspace $X^o \subset X$, stable under a compact torus $\mathbb{T}_P \subset T_P$, such that $\Phi|_{X^o}$ is a symplectomorphism (isomorphism of symplectic manifolds) onto $\Phi(X^o)$.
\end{enumerate}

\noindent
 Harada and Kaveh's map $\Phi$ makes the intuition that $X$ and $X_P$ have ``very similar" geometry precise in the symplectic category; indeed, they share a dense, open integrable system on  $X^o$.  Furthermore, it follows that the momentum map of $X^o$ extends continuously to $X$ by way of $\Phi: X \to X_P$.  This construction need not only be applied to toric degenerations, indeed any degeneration to a variety with a torus action will produce a \emph{contraction map}, and a dense, open (not necessarily integrable) Hamiltonian system on $X$. From now on, we call $\Phi$ the contraction map, and we say that a degeneration of an algebraic variety, viewed as a symplectic space, possesses a contraction map when a map to the special fiber exists with the properties above.  Degenerations with contraction maps provide a large family of new integrable systems, and proving that a given dense, open integrable system in a symplectic manifold results from a degeneration is a way to establish that its momenta extend continuously to the ambient space. 

In \cite{HK} and \cite{NNU}, the contraction map is defined by utilizing a powerful construction of Ruan \cite{R}.  Ruan proves existence of continuous maps between hypersurfaces in a K\"ahler manifold by way of a gradient flow.  There are two drawbacks to this method: firstly the resulting map is hard to compute (indeed, the relevant theorem of \cite{HK} on $\Phi$ is mainly an existence result), and secondly, in its present form, the theory applies primarily to smooth projective varieties, and their torus quotients.  The purpose of this paper is to study a family of not necessarily smooth, not necessarily compact varieties which possess flat degenerations with explicitly computable contraction maps. 

In place of smoothness and compactness, we assume our varieties $X$  are semi-projective (i.e. projective over an affine variety) and have an action by a connected complex linearly reductive group $G$ (which is linearized for a relative polarization).  Popov \cite{P} defines a flat degeneration of any such variety to its so-called \emph{horospherical contraction} $X \Rightarrow X^{\hc}.$  The latter is also a $G$-variety, and moreover comes with an additional action by a maximal torus $T \subset G$.  Let $U_-, U_+ \subset G$ be opposite maximal unipotent subgroups adapted to $T$, the horospherical contraction $X^{\hc}$ can be understood as the following GIT (geometric invariant theory)-quotient:

\begin{equation}\label{fromone}
X^{\hc} = \big[X\q U_- \times U_+ \ql G\big]\Big/\!\!\!\!\Big/ T.\\
\end{equation}

\noindent
Here the quotients $X\q U_-$ and $U_+ \ql G$ both come with a residual $T$-action, and the quotient by $T$ is defined through
the diagonal of these actions. 

We study the same gradient flow as \cite{HK} and \cite{NNU}, and show that a contraction map not only always exists, but moreover can be described explicitly.  In fact, it can entirely be formulated in terms of the Hamiltonian geometry of $X$, without referring to the degeneration $X\Rightarrow X^{\hc}$, which is why we refer to it as the \emph{symplectic contraction map}, the image of a which is a new symplectic space, canonically determined by the Hamiltonian geometry of the original $X$.

What allows us to do this is the so-called Vinberg monoid $S_G$ associated to $G$, in particular its Hamiltonian geometry.  The monoid $S_G$ can be understood to be the universal horospherical degeneration of $G$, but as the name indicates its total space is also a monoid -- in fact Vinberg's motivation for introducing it was a universal property in the category of reductive monoids which it exhibits. For our purposes this monoid structure enables us to bring differential-geometric decomposition theorems for Lie groups into play, allowing us to handle the flow much more concretely.

To describe the symplectic contraction, we now fix a maximal compact subgroup $K \subset G$, with maximal torus $\mathbb{T} \subset T$. 
We carry out the horospherical contraction construction in the symplectic category using the Hamiltonian analogue of the quotient by a maximal unipotent subgroup: the symplectic implosion operation of Guillemin, Jeffrey, and Sjamaar, \cite{GJS}.  This operation replaces a Hamiltonian $K$-space $X$ with a (singular) Hamiltonian $\mathbb{T}$-space $EX$.  The group $G$ is replaced by the
cotangent bundle $T^*K$ of the group $K$, which comes with the right $E_{\mathcal{R}}(T^*K)$, respectively left $E_{\mathcal{L}}(T^*K)$ implosions with respect to the right and left actions of $K$ -- see Section \ref{prelim} for more details.  For $X$ a Hamiltonian $K$-space, the symplectic contraction $X^{\sc}$ is defined as the following symplectic reduction:

\begin{equation}\label{fromtwo}
X^{\sc} = \big[EX \times E_{\mathcal{L}}(T^*K)\big]\Big/\!\!\!\!\Big/_{\!\! \! 0}\mathbb{T}.\\
\end{equation}

\noindent
We show that $X^{\sc}$ is naturally homeomorphic to $X^{\hc}$, given its analytic topology.  The contraction
$X^{\sc}$ is a singular Hamiltonian $(K\times \mathbb{T})$-space.    We relate the geometry of $X^{\sc}$ to $X$ with our first result (see Section \ref{gen}). 

\begin{theorem}\label{mainthm}
For a Hamiltonian $K$-space $X$, there is a surjective, continuous, proper map $\Phi_X: X \to X^{\sc}$ which
intertwines the Hamiltonian $K$-actions, in particular $\mu(x) = \mu(\Phi_X(x))$ for any point $x \in X$ -- here $\mu$ denotes the momentum maps for the $K$ action on either $X$ or $X^{\sc}$. 
Furthermore, the map $\Phi_X$ restricts to a symplectomorphism on a dense, open subspace $X_I \subset X$ onto
a $\mathbb{T}$-stable subspace of $X^{\sc}$. 
\end{theorem}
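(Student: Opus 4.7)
My strategy is to construct $\Phi_X$ explicitly via a Mostow-type decomposition of $X$, reading the target pointwise in the quotient presentation of $X^{\sc}$, and then to verify each stated property against this formula. The key geometric input is that every $K$-orbit of $X$ meets $\mu^{-1}(\mathfrak{t}_+^*)$, which lets us ``canonically'' factor a point of $X$ into an implosion coordinate and a group coordinate, while the passage through $E_\mathcal{L}(T^*K)$ encodes where the $K$-orbit has been rotated to.

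\emph{Construction.} For $x\in X$, pick $k\in K$ and $y\in \mu^{-1}(\mathfrak{t}_+^*)$ with $x=k\cdot y$, and set
\[
\Phi_X(x)\;=\;\Big[\,[y]_{EX},\;[(k,\mu(y))]_{E_\mathcal{L}(T^*K)}\,\Big]\in X^{\sc},
\]
where $(k,\xi)\in K\times\mathfrak{k}^*\cong T^*K$ uses the left trivialisation. The identity $\mu(y)=\xi$ places the pair in the zero level of the $\mathbb{T}$-moment map that defines $X^{\sc}$, using that the residual $\mathbb{T}$-moment on $E_\mathcal{L}(T^*K)$ is the left-trivialised $\xi$, and on $EX$ is $\mu(y)$, so the difference vanishes. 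Well-definedness: the ambiguity in $(y,k)$ is exactly $(t\cdot y,\,kt^{-1})$ for $t$ in the stabiliser of $y$, which in the interior of $\mathfrak{t}_+^*$ is $\mathbb{T}$ and on walls is a larger subgroup $K_\sigma\supset\mathbb{T}$; the first case is absorbed by the $\mathbb{T}$-quotient, and the second is absorbed because the implosion construction collapses exactly the $K_\sigma/\mathbb{T}$-orbits at the relevant points of both $EX$ and $E_\mathcal{L}(T^*K)$.

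\emph{Topological properties and equivariance.} Continuity of $\Phi_X$ follows from choosing continuous local sections of the Mostow decomposition on open subsets of $X$ (available via Kempf--Ness type arguments) and from the continuity of the implosion and reduction maps; continuity at singular strata then reduces to a matching of the implosion collapses described above, essentially the universal property of implosion. Surjectivity is immediate since a representative $([y'],[(k',\xi')])$ in the zero level forces $\xi'=\mu(y')\in\mathfrak{t}_+^*$, whence $k'\cdot y'\in X$ maps to it. Properness follows from compactness of $K$ combined with properness of $y\mapsto [y]$ onto $EX$. For momentum-equivariance, the residual $K$-moment on $X^{\sc}$ is inherited from the right $K$-action on $T^*K$, whose left-trivialised moment at $(k,\xi)$ is $\mathrm{Ad}^*(k)\xi$; substituting $\xi=\mu(y)$ gives $\mathrm{Ad}^*(k)\mu(y)=\mu(k\cdot y)=\mu(x)$ by $K$-equivariance of $\mu$.

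\emph{Symplectomorphism on the dense open.} Let $X_I$ be the $K$-saturation of $\mu^{-1}(\mathrm{int}(\mathfrak{t}_+^*))$; this is open, dense (by the principal orbit-type theorem applied to the $K$-moment), and $\mathbb{T}$-stable. On $X_I$ the Mostow decomposition exhibits a locally trivial principal $\mathbb{T}$-bundle presentation $\mu^{-1}(\mathrm{int}(\mathfrak{t}_+^*))\times_\mathbb{T} K\cong X_I$; meanwhile, over $\mathrm{int}(\mathfrak{t}_+^*)$ the implosion $EX$ coincides with $\mu^{-1}(\mathrm{int}(\mathfrak{t}_+^*))/\mathbb{T}$ as a symplectic manifold (implosion only modifies the structure on walls), and analogously $E_\mathcal{L}(T^*K)$ restricts to the smooth symplectic model $K\times_\mathbb{T}\mathrm{int}(\mathfrak{t}_+^*)$. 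This reduces the statement to a direct verification that the Marsden--Weinstein reduction by $\mathbb{T}$ of the product of these two smooth pieces recovers $X_I$ with its original form. This is where I expect the \textbf{main obstacle}: one must check that the symplectic form assembled from implosion plus reduction matches the original symplectic form on $X_I$, not just as sets but as symplectic manifolds. The cleanest route is a pointwise tangent-space calculation, decomposing $T_xX$ along the Mostow fibration into the ``$\mathfrak{t}$-horizontal'', ``$\mathfrak{k}/\mathfrak{t}$'', and ``symplectic slice'' summands, and matching them under $d\Phi_X$ with the corresponding summands of the reduced tangent space, using the shifting-trick identity $X\cong (X\times T^*K)\q_0 K$ as a template for how $T^*K$ supplies the missing $K$-direction. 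Once this matches at a single point of $X_I$, $K\times\mathbb{T}$-equivariance and density propagate it to all of $X_I$.
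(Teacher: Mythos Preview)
Your direct formula for $\Phi_X$ is essentially the one the paper writes down in Section~\ref{gen}, so the overall shape is right, but there are genuine gaps.

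First, your description of implosion over the open chamber is wrong: when $\mu(y)\in\Delta^o$ the stabiliser is $\mathbb{T}$, hence $[K_I,K_I]$ is trivial and implosion does \emph{nothing} there. Thus $EX$ over the interior is $\mu^{-1}(\Delta^o)$ itself, not $\mu^{-1}(\Delta^o)/\mathbb{T}$, and likewise for $E_{\mathcal{L}}(T^*K)$; the $\mathbb{T}$-reduction producing $X^{\sc}$ is a separate, subsequent step. As written, your symplectomorphism check would compare spaces of the wrong dimension. Second, you take $X_I$ to be the $K$-saturation of $\mu^{-1}(\operatorname{int}\mathfrak{t}_+^*)$, but the moment image need not meet the open chamber at all; the correct dense open is $K\mu^{-1}(\Delta_I)$ for $\Delta_I$ the \emph{principal face}, and one needs the Cross Section Theorem to know $[K_I,K_I]$ acts trivially on $\mu^{-1}(\Delta_I)$. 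Third, continuity via ``continuous local sections of the Mostow decomposition'' cannot work: such sections fail precisely at the walls, which is where continuity is the issue.

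The paper's organising idea, which you are missing, is to reduce everything to the universal case $X=T^*K$. One proves surjectivity, properness, and continuity of $\Phi_{T^*K}$ by hand (Proposition~\ref{Gcontract}: properness by chasing compact sets back through the quotient and implosion maps, then continuity from properness on bounded pieces plus scaling), and then transfers all properties to arbitrary $X$ via the identifications $X\cong K\,{}_0\!\ql(X\times T^*K)$ and $X^{\sc}\cong K\,{}_0\!\ql(X\times(T^*K)^{\sc})$, so that $\Phi_X$ is simply induced by $\Id_X\times\Phi_{T^*K}$. For the symplectomorphism on $X_I^o$ the paper avoids tangent-space calculations entirely: it sets $Y=\mu^{-1}(\Delta_I)\times K$, writes down two submersions $\pi_1:Y\to X_I^o$ and $\pi_2:Y\to (X_I^o)^{\sc}$, checks each is constant on the other's fibres (so $\Phi_X|_{X_I^o}$ and the inverse $a$ are smooth), and then compares the two symplectic forms by pulling both back to $Y$, where they visibly agree as restrictions of $\omega+\omega_0$ on $X\times T^*K$.
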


The equivalence relation $\sim_c$ on $X$ defined by the map $\Phi_X$ is described naturally using the momentum map $\mu$. Two points $x, y \in X$ map to the same point in $X^{\sc}$ if and only if $\mu(x) = \mu(y)$ and $kx = y$ for some $k \in [K_{\mu(x)}, K_{\mu(x)}],$ where $K_{\mu(x)} \subset K$ is the stabilizer subgroup of $\mu(x)$.  As a consequence, the space $X^{\sc}$ can be realized topologically as the quotient space $X/\sim_c$.

  The highest face $\Delta_I \subset \Delta$ of the Weyl chamber hit by the momentum map of $X$ is called the principal face, and we call the set $X_I=K\mu^{-1}(\Delta_I) \subset X$ the principal subspace.  This subspace comes by way of the cross-section theorem of Lerman, Meinrenken, Tolman, and Woodward \cite{LMTW}. 
In order to define the action of  an element $t\in \mathbb{T}$ on $X_I$ we first select, for a point $x\in X_I$, an $h \in K$ such that $\mu(h x) \in \Delta_I$.  With this we can then define the following action:

\begin{equation}
t\star x = hth^{-1}x.\\
\end{equation}

\noindent
We prove this action is well-defined using the cross-section theorem in Section \ref{mainsec}.  Theorem~\ref{mainthm}
then shows that this action is Hamiltonian, and that its momentum map $\mu_{\mathbb{T}}$ extends continuously
to $X$.  We also show in Section \ref{mainsec} that the momentum image $\mu_{\mathbb{T}}\circ \Phi_X(X)$ coincides
with the Kirwan polyhedron $\mu(X) \cap \Delta$ of the $K$-action on $X$ (recall that $X^{\sc}$ is a $(K\times\mathbb{T})$-space, here $\mu_{\mathbb{T}}$ is the moment map for the action of $\mathbb{T}$ as the second factor of $K\times \mathbb{T}$).

Going back to horospherical contractions of semi-projective varieties, we then show that the gradient flows associated to them now indeed give rise to the symplectic contractions as in Theorem \ref{mainthm}.  To demonstrate this, we first establish the result (using the Vinberg monoid) for the horospherical contractions of the group $G$ itself.  Using this, we can then extend the result to horospherical contractions of general semi-projective $G$-varieties.

The rest of the paper is devoted to applying our results to the study of branching problems in the representation theory of the group $G$. For a map of connected semi-simple complex groups, $\phi: H \to G$, the branching problem is the computation of the decompositions of the irreducible $G$ representations into irreducible $H$ representations: 

\begin{equation}
M_{\lambda} = \bigoplus_{\eta \in \mathfrak{X}_H} \Hom_H(M_{\eta}, M_{\lambda})\otimes M_{\eta}.\\
\end{equation}

\noindent
The branching problem associated to $\phi$ is geometrically encapsulated in an affine branching variety $X(\phi)$, see Section \ref{branching}. This variety comes with an action by the product of maximal tori $T_H\times T_G$.  The following is proved in \cite{M}:

\begin{theorem}\label{branch}
For every factorization $\pi: H \to F, \psi: F \to G$ of $\phi = \psi \circ \pi$, there is a cone $C(\pi, \psi) = \Delta_F^{\vee}$ of valuations
on the coordinate ring $\C[X(\phi)]$, such that the associated graded algebra of a generic element of this cone is the invariant
ring $\C[X(\psi) \times X(\pi)]^{T_F} = \C[(X(\psi)\times X(\pi))\q T_F].$
\end{theorem}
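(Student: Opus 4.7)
The plan is to identify $\C[X(\phi)]$ with a subring of $\C[X(\psi)\times X(\pi)]$ cut out by an $F$-invariance condition, then construct a family of valuations parametrised by $\Delta_F^\vee$ that horospherically degenerates the $F$-action to a $T_F$-action.

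First I would set up the core representation-theoretic identity. Applying the branching decompositions $M_\lambda|_F = \bigoplus_{\kappa} \Hom_F(M_\kappa,M_\lambda)\otimes M_\kappa$ and then $M_\kappa|_H = \bigoplus_\eta \Hom_H(M_\eta,M_\kappa)\otimes M_\eta$, and comparing with $M_\lambda|_H$, gives Frobenius-type isomorphisms
\begin{equation*}
\Hom_H(M_\eta,M_\lambda)\;\cong\;\bigoplus_{\kappa\in\mathfrak{X}_F}\Hom_F(M_\kappa,M_\lambda)\otimes\Hom_H(M_\eta,M_\kappa).
\end{equation*}
Summing over $\lambda\in\mathfrak{X}_G$ and $\eta\in\mathfrak{X}_H$ and using the description of the affine branching varieties as $\C[X(\psi)]=\bigoplus_{\kappa,\lambda}\Hom_F(M_\kappa,M_\lambda)$ and $\C[X(\pi)]=\bigoplus_{\eta,\kappa}\Hom_H(M_\eta,M_\kappa)$, this identifies $\C[X(\phi)]$ with the subalgebra of $\C[X(\psi)\times X(\pi)]$ obtained by imposing the appropriate $F$-invariance between the two $\kappa$-indices (concretely, invariance under the diagonal of a maximal unipotent $U_F\subset F$ together with a matching $T_F$-weight condition, exactly as in equation~\eqref{fromone}).

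Next, for any integral cocharacter $\chi\in\Delta_F^\vee$ I would define a filtration on $\C[X(\phi)]$ by
\begin{equation*}
v_\chi(f)\;=\;\min\bigl\{\langle\chi,\kappa\rangle : f \text{ has a nonzero component in the $\kappa$-summand above}\bigr\},
\end{equation*}
and verify multiplicativity. This uses the fact that under the algebra structure of $\C[X(\psi)\times X(\pi)]$, products of elements of $T_F$-weights $\kappa_1,\kappa_2$ live in weights dominated by $\kappa_1+\kappa_2$ in the Cartan order, and the top Cartan-product component is nonzero. Since $\chi$ is dominant, $\langle\chi,-\rangle$ is monotone for this partial order, so the leading terms of products do not cancel and $v_\chi(fg)=v_\chi(f)+v_\chi(g)$. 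The set of $\chi$ for which this works sweeps out the entire cone $\Delta_F^\vee$, giving the claimed $C(\pi,\psi)$.

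Finally, for $\chi$ in the relative interior of $\Delta_F^\vee$ the pairing $\langle\chi,-\rangle$ is injective on the monoid $\mathfrak{X}_F^+$ of dominant weights, so the filtration $v_\chi$ separates $T_F$-weights and $\gr_{v_\chi}\C[X(\phi)]$ is, as a $(T_G\times T_H)$-graded vector space, the full direct sum $\bigoplus_{\kappa,\lambda,\eta}\Hom_F(M_\kappa,M_\lambda)\otimes\Hom_H(M_\eta,M_\kappa)$. In the associated graded, the $U_F$-invariance condition that cut out $\C[X(\phi)]$ is relaxed to a condition only on matching $T_F$-weights of the two factors, which is precisely the defining condition of $\C[X(\psi)\times X(\pi)]^{T_F}=\C[(X(\psi)\times X(\pi))\q T_F]$; this is the algebraic shadow of the horospherical contraction with respect to the $F$-action on $X(\psi)\times X(\pi)$.

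The main obstacle I expect is the bookkeeping in the first step: matching the conventions for the branching variety (whose precise definition involves choices of Borel subgroups and unipotent quotients in $H$, $F$, and $G$) so that the Frobenius identity lifts to an identification of algebras, not merely graded vector spaces. A secondary subtlety is the verification of multiplicativity of $v_\chi$; this ultimately rests on the nonvanishing of the Cartan projector $M_{\kappa_1}\otimes M_{\kappa_2}\to M_{\kappa_1+\kappa_2}$, but requires care because products in $\C[X(\psi)]$ and $\C[X(\pi)]$ involve contractions of $\Hom$-spaces which must be shown to be compatible with the chosen leading-term convention.
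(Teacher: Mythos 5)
Your overall intuition---filter $\C[X(\phi)]$ by the $F$-weight $\kappa$ appearing in the Frobenius decomposition, show multiplication only shifts $\kappa$ downward in the dominance order, and read off the associated graded---is the right shape, and it is also what the paper does (via \cite{M} and Proposition~\ref{branching-valuations}). But there is a genuine error in your first step, and a sign error in the second, that need to be fixed before this would stand.

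The claim that the Frobenius decomposition ``identifies $\C[X(\phi)]$ with a subalgebra of $\C[X(\psi)\times X(\pi)]$'' is false, and it collapses the theorem. The identity
$\Hom_H(M_\eta,M_\lambda)\cong\bigoplus_\kappa\Hom_F(M_\kappa,M_\lambda)\otimes\Hom_H(M_\eta,M_\kappa)$
is only an isomorphism of $T_H\times T_F\times T_G$-graded \emph{vector spaces} between $\C[X(\phi)]$ and $\C[X(\psi)\times X(\pi)]^{T_F}$. The two multiplications are genuinely different: in $\C[X(\psi)\times X(\pi)]^{T_F}$ the product is $\kappa$-graded (the Cartan product $M_{\kappa_1}\otimes M_{\kappa_2}\to M_{\kappa_1+\kappa_2}$ lands exactly in degree $\kappa_1+\kappa_2$), while in $\C[X(\phi)]$ the product of the pure-$\kappa_1$ and pure-$\kappa_2$ pieces has a nonzero leading term in degree $\kappa_1+\kappa_2$ plus a spread of lower-order terms $\kappa<\kappa_1+\kappa_2$. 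If $\C[X(\phi)]$ were already a subalgebra of $\C[X(\psi)\times X(\pi)]$ there would be nothing to degenerate; the $\kappa$-decomposition is a filtration on $\C[X(\phi)]$, not a grading. (Relatedly, the sentence ``under the algebra structure of $\C[X(\psi)\times X(\pi)]$, products of elements of $T_F$-weights $\kappa_1,\kappa_2$ live in weights dominated by $\kappa_1+\kappa_2$'' attributes the spreading to the wrong algebra: in $\C[X(\psi)\times X(\pi)]$ the product is concentrated in $\kappa_1+\kappa_2$; the spreading happens in $\C[X(\phi)]$, and establishing it is the whole content of the argument.) The ``$U_F$-invariance plus matching $T_F$-weight'' description of the purported subalgebra is also not well-posed: there is no residual $U_F$-action on $X(\psi)\times X(\pi)$; the unipotent quotients have already been taken.

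What you should prove instead is exactly what the paper makes transparent by inserting an explicit $F$ factor: write $\C[X(\phi)]=\C[H\q U_H\times F\times G\q U_G]^{H\times F}$ (the ``inefficient but useful'' description), let $h\in\Delta_F^\vee$ be regular, and equip the middle factor $\C[F]=\bigoplus_\kappa\Hom(M_\kappa,M_\kappa)$ with Popov's filtration $\mathcal{F}^h_{\leq m}=\sum_{\kappa(h)\leq m}\Hom(M_\kappa,M_\kappa)$. Theorem~\ref{ahcontract} already shows this is a valuation on $\C[F]$ whose associated graded is $\C[\As(F)]$; since the filtration is $G_H\times F\times G_G$-linear, it descends to $\C[X(\phi)]$ and the associated graded there is $\C[H\q U_H\times\As(F)\times G\q U_G]^{H\times F}=\C[X(\psi)\times X(\pi)]^{T_F}$. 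This is precisely the step you flag as the ``main obstacle'' (lifting the Frobenius identity from graded vector spaces to algebras); the paper's route sidesteps it entirely by letting Popov do the work on $\C[F]$.

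Finally, the valuation should be a top-term valuation, $v_\chi(f)=\max\{\langle\chi,\kappa\rangle:\text{the $\kappa$-component of $f$ is nonzero}\}$, not $\min$: the filtration is by $\mathcal{F}_{\leq m}$, multiplicativity $v_\chi(fg)=v_\chi(f)+v_\chi(g)$ comes from the nonvanishing of the Cartan projector onto the \emph{top} component $\kappa_1+\kappa_2$, and $\chi$ regular in $\Delta_F^\vee$ guarantees no lower-order cross term can exceed $\langle\chi,\kappa_1+\kappa_2\rangle$. With $\min$ the leading term of a product is not controlled and multiplicativity fails.
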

\noindent We shall use the term \emph{cone} throughout to mean a finitely generated convex polyhedral cone in a real vector space.

For maximal compact subgroups $L \subset H$, $K \subset G$, the space $X(\phi)$ can be realized as a Hamiltonian $(H\times K)$-space.  In particular, $X(\phi)$ comes equipped with a symplectic structure inherited from $E_{\mathcal{R}}(T^*L) \times E_{\mathcal{R}}(T^*K)$ by way of a symplectic reduction, see Proposition \ref{affine-branching-reduction}.
This allows one to study the branching problem using symplectic geometry.  We apply Theorem \ref{mainthm} to prove the following symplectic analogue to Theorem \ref{branch} (see Section \ref{secondrefsection}).

\begin{theorem}\label{mainbranch}
For $\phi: L \to K$ a map of compact semi-simple Lie groups, and a factorization $\pi: L \to J, \psi: J \to K$, there is a corresponding
surjective, continuous, proper contraction map $\Phi_{\pi, \psi}: X(\phi) \to [X(\psi) \times X(\pi)]\q_{\! 0} \mathbb{T}_J$.  This map
restricts to a symplectomorphism on a dense open subset $X^o(\phi) \subset X(\phi)$, identifying this subspace as a Hamiltonian 
$(\mathbb{T}_L \times \mathbb{T}_J \times \mathbb{T}_K)$-space.  The momentum map $\mu_{\mathbb{T}_L \times \mathbb{T}_J \times \mathbb{T}_K}: X^o(\phi) \to \mathfrak{t}_L^* \times \mathfrak{t}_J^* \times \mathfrak{t}_K^*$ extends to a continuous map
on all of $X(\phi),$ with image the cone $P(\psi, \pi) \subset \Delta_L \times \Delta_J \times \Delta_K$ spanned by those triples $(\mu, \eta, \lambda)$
such that $M_{\mu} \subset M_{\eta} \subset M_{\lambda}$.
\end{theorem}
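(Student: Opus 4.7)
The strategy is to realize the branching variety $X(\phi)$ as a Hamiltonian $J$-space via the factorization $\phi = \psi \circ \pi$, apply Theorem \ref{mainthm} to this $J$-action, and identify the resulting symplectic contraction with the GIT quotient $[X(\psi) \times X(\pi)] \q_{\!0} \mathbb{T}_J$. By Proposition \ref{affine-branching-reduction}, $X(\phi)$ is a symplectic reduction of $E_{\mathcal{R}}(T^*L) \times E_{\mathcal{R}}(T^*K)$ carrying commuting Hamiltonian $L$- and $K$-actions. The maps $\pi$ and $\psi$ pull these back to two $J$-actions on $X(\phi)$, whose appropriate diagonal combination yields a single Hamiltonian $J$-action commuting with the ``residual'' $(L\times K)$-action. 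Theorem \ref{mainthm} then furnishes a surjective, continuous, proper contraction map $\Phi_{\pi, \psi}: X(\phi) \to X(\phi)^{\sc}$ which restricts to a symplectomorphism on the principal subspace $X^o(\phi) := X(\phi)_I$.

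The second step is to identify $X(\phi)^{\sc}$ with $[X(\psi) \times X(\pi)] \q_{\!0} \mathbb{T}_J$. For this we invoke Theorem \ref{branch}: the horospherical contraction $X(\phi)^{\hc}$ for the $J$-action has coordinate ring $\C[X(\psi) \times X(\pi)]^{T_J}$, which is precisely the coordinate ring of the GIT quotient $[X(\psi) \times X(\pi)] \q T_J$. Combining this algebraic identification with the earlier result of the paper that $X^{\sc} \cong X^{\hc}$ as topological spaces, and noting that symplectic reduction at $0$ by $\mathbb{T}_J$ is topologically the GIT quotient by $T_J$, yields the desired homeomorphism. The $(\mathbb{T}_L \times \mathbb{T}_J \times \mathbb{T}_K)$-structure on $X(\phi)^{\sc}$ is then obtained by combining the restriction of the residual $(L\times K)$-action to the maximal tori $\mathbb{T}_L \times \mathbb{T}_K$ with the additional $\mathbb{T}_J$-action intrinsic to the symplectic contraction construction, see \eqref{fromtwo}.

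Finally, for the momentum image: by the description of the momentum polytope of $X^{\sc}$ given at the end of Section \ref{mainsec}, the image of $\mu_{\mathbb{T}_L \times \mathbb{T}_J \times \mathbb{T}_K} \circ \Phi_{\pi, \psi}$ coincides with the weight cone of the $(\mathbb{T}_L \times \mathbb{T}_J \times \mathbb{T}_K)$-graded coordinate ring $\C[X(\psi) \times X(\pi)]^{T_J}$. A Peter--Weyl decomposition of this ring shows the nonzero weight triples are precisely those $(\mu, \eta, \lambda)$ for which $\Hom_J(M_\eta, M_\mu) \neq 0$ and $\Hom_J(M_\eta, M_\lambda) \neq 0$, i.e.\ the triples satisfying $M_\mu \subset M_\eta \subset M_\lambda$. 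Continuity of the extension of the momentum map from $X^o(\phi)$ to all of $X(\phi)$ is immediate from Theorem \ref{mainthm} applied to the $(\mathbb{T}_L \times \mathbb{T}_J \times \mathbb{T}_K)$-action on $X(\phi)^{\sc}$.

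The main obstacle I expect is verifying that the identification of $X(\phi)^{\sc}$ with $[X(\psi) \times X(\pi)] \q_{\!0} \mathbb{T}_J$ is compatible not merely as topological spaces but as Hamiltonian $(\mathbb{T}_L \times \mathbb{T}_J \times \mathbb{T}_K)$-spaces: each torus factor's action on one side must be matched with the correct action on the other. Unpacking the branching-variety definitions of $X(\pi)$ and $X(\psi)$, together with the precise $T_J$-invariant decomposition underlying Theorem \ref{branch}, requires careful bookkeeping of gradings and equivariance, and this is the most delicate part of the argument.
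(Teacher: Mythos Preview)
There is a genuine gap at your first step. The space $X(\phi)$ does \emph{not} carry a Hamiltonian $J$-action, nor even full $L$- and $K$-actions, so Theorem~\ref{mainthm} cannot be applied to it with the role of $K$ played by $J$. Recall that $E_{\mathcal{R}}(T^*L)$ is an $(L \times \mathbb{T}_L)$-space and $E_{\mathcal{R}}(T^*K)$ is a $(K \times \mathbb{T}_K)$-space; the reduction in Proposition~\ref{affine-branching-reduction} uses up the left $L$-action on both factors (on the second via $\phi$), leaving only the residual $\mathbb{T}_L \times \mathbb{T}_K$-action. The map $\pi: L \to J$ goes the wrong way to pull back an $L$-action to a $J$-action, and while $\psi: J \to K$ could pull back a $K$-action, there is no residual $K$-action on $X(\phi)$: the left $K$-action on $E_{\mathcal{R}}(T^*K)$ does not commute with the diagonal $L$-action (which factors through it via $\phi$) and hence does not descend to the quotient.

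The paper avoids this by working before the quotient. Using the universal property of $T^*J$ it rewrites
\[
X(\phi) \;=\; L _{\ 0}{\!\qlb}{} \big(E_{\mathcal{R}}(T^*L) \times T^*J \times E_{\mathcal{R}}(T^*K)\big)\qb_{\! 0} J,
\]
where $L$ acts on the left of $T^*J$ through $\pi$, and $J$ acts on the right of $T^*J$ and on $E_{\mathcal{R}}(T^*K)$ through $\psi$. The branching contraction is then obtained by applying the universal map $\Phi_{T^*J}: T^*J \to (T^*J)^{\sc}$ to the middle factor; since $\Phi_{T^*J}$ is $(J \times J)$-equivariant and intertwines momentum maps (Proposition~\ref{Gcontract}), it descends through both reductions to give $\Phi_{\pi,\psi}$. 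The target is identified directly with $[X(\pi) \times X(\psi)]\q_{\!0}\mathbb{T}_J$ by unwinding the definition of $(T^*J)^{\sc}$, and the dense-open symplectomorphism and momentum-image statements then follow from Propositions~\ref{Xsymp} and~\ref{branch-cone-image} rather than from an appeal to Theorem~\ref{branch}. Once the construction is set up at this pre-quotient level, the bookkeeping of the three torus actions that you flag as the delicate point becomes transparent, and your second and third steps go through essentially as written.
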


In Section \ref{GTsystem} we use a version of Theorem \ref{mainbranch} (Proposition \ref{chaincontract}) to give a construction of the celebrated Gel'fand-Tsetlin integrable system in a coadjoint orbit $\mathcal{O}_{\lambda}$ of the unitary group $\U(m)$.  This system was first constructed by Guillemin and Sternberg in \cite{GSt} in order to investigate issues of quantization on complex flag manifolds, and it is the focus of Nishinou, Nohara, and Ueda's work in \cite{NNU}.   A similar system was constructed by Hausmann and Knutson \cite{HaKn} on weight varieties of the Grassmannian variety $Gr_2(\C^n)$ and generalized by Howard, Millson, and the second author in \cite{HMM}. This system is shown to be obtained from a flat algebraic degeneration and is related to an integrable system on moduli spaces of Euclidean polygons. In the examples of Section \ref{branching} we explain how our construction produces the systems studied in \cite{HK} and \cite{HMM} and relates these systems to the tropical Grassmannian variety of Speyer and Sturmfels \cite{SpSt}. Several aspects of recent work of Lane \cite{JLane} on the Hamiltonian geometry of Thimm's trick are related to our results.  In particular, one can view our contraction map $\Phi_X: X \to X^{\sc}$ as a continuous extension of Thimm's trick from the principal subspace $X_I \subset X$ to all of $X$.

\subsection*{Acknowledgements} We would like to thank Megumi Harada, Kiumars Kaveh, Frances Kirwan, Allen Knutson and Reyer Sjamaar  for useful conversations.  This project was started during the workshop \emph{Okounkov Bodies and Applications}, organized by Megumi Harada, Kiumars Kaveh and  Askold Khovanskii at the Mathematisches Forschungsinstitut Oberwolfach in May 2014; we are very grateful to the organizers and the MFO for an inspiring program and excellent working conditions. 

 It is our understanding that at least some of the ideas presented here were known to Allen Knutson, who discussed them in a workshop at the Fields Institute in June 2001, and mentioned it in a MathOverflow answer \cite{MOKnutson}.
 
CM was partially supported by NSF Grant DMS-1500966.

\section{Preliminaries}\label{prelim}

\subsection{Notation}

\begin{enumerate}
\item $G$ -- a connected complex linearly reductive group.\\
\item $T \subset G$ -- a maximal algebraic torus of $G$, which we will also denote by $T_G$ when confusion is possible.  Sometimes we will also use the \emph{abstract} maximal torus (cfr. \cite[p. 137]{chrginz}), denoted by $T_G^{\abs}$.  \\
\item $U_-, U_+ \subset G$ -- opposite maximal unipotent subgroups of $G$.  If confusion is possible we will denote the subgroup $U_+$ of a group $H$ by $U_H$.\\ 
\item $K$ -- a connected compact Lie group, a compact form of $G$ when relevant: $K_{\mathbb{C}}=G$.\\ 
\item $\mathbb{T} \subset K$ -- a maximal torus of $K$.\\ 
\item $\mathfrak{k}, \mathfrak{k}^*$ -- the Lie algebra of $K$ and its dual.  We abbreviate the co-adjoint representation of $K$ on $\mathfrak{k}^*$ by denoting $\operatorname{Ad}^*(g)(v)$ as $gv$.  \\
\item $\langle -, -\rangle$ -- a $K$-invariant inner product on $\mathfrak{k}.$\\
\item $\mathfrak{t}, \mathfrak{t}^*$ -- the Lie algebra of $\mathbb{T}$ and its dual.\\
\item $\Delta$, $\Delta^{\vee}$ -- a Weyl chamber and its dual Weyl chamber for $G$ or $K$, determined by the choice of $U_+$.  When necessary we shall indicate the relevant group by $\Delta_G, \Delta_K^{\vee}$ to avoid confusion.\\
\item $\mathfrak{X}_G,\mathfrak{X}_G^+$ -- the lattice of weights of $G$, and its submonoid of dominant weights.\\
\item $M_{\lambda}$ -- the irreducible representation of $G$ (or $K$) associated to a dominant weight $\lambda \in \mathfrak{X}^+_G.$\\
\item $\mathcal{L},\mathcal{R}$ -- actions of a group $G$ or $K$ on itself, given by $\mathcal{L}_g(h)=gh$ and $\mathcal{R}_g(h)=hg^{-1}$.  We shall also denote the induced actions of $K$ on $T^*K$, as well as the extensions of these actions to monoids containing $G$ as group of units, by the same symbols.  We shall occasionally abuse terminology and refer to these actions as the \emph{left}- and \emph{right}-actions, even though all actions occuring in this paper are left actions in the conventional sense.\\
\item $X\Rightarrow X_0$ -- degeneration of an algebraic variety $X$ to $X_0$, i.e. a flat morphism $\pi:\mathcal{X}\rightarrow \mathbb{C}$ such that $\pi^{-1}(1)\cong X$ and $\pi^{-1}(0) \cong X_0$.  The base $X_0$ will typically be an affine toric variety, and the family will be trivial over the main torus orbit (e.g. $\mathbb{C}\setminus\{0\}$ if $X_0=\mathbb{C}$). 
\end{enumerate}

\subsection{Decompositions}
We shall begin by briefly recalling the matrix and Lie group decompositions we shall use.  All of these are standard, see e.g. \cite{Kna, HN}, but we shall particularly use them in the Hamiltonian setting to serve our purposes.  In what follows $G$ will be a connected complex linearly reductive group, with a chosen compact form $K$.
\subsubsection{Polar decomposition and momentum maps}
Let $M$ be a complex finite-dimensional Hermitian vector space.  Then any linear operator $B\in\End(M)$ can be written as $B=UP$, where $U$ is unitary, and $P$ is positive semi-definite.  In this decomposition $P$ is always unique (and equal to $\sqrt{B^*B}$), and if $B$ is invertible then $U$ is unique as well.  

One could equally well write a decomposition $B=\widetilde{P}\widetilde{U}$, with $\widetilde{P}=\sqrt{BB^*}$ positive semi-definite, and $\widetilde{U}$ unitary, but we shall stick to the common convention.  

Moreover, if we equip $\End(M)$ with the K\"ahler form \begin{equation}\label{formEnd}\omega_{\End(M)}(A,B)=-\Im\big(\Tr(AB^*)\big),\end{equation} then we have as momentum map for the $\mathcal{R}$-action of $\U(M)$ on $\End(M)$:
$$\mu(A)=iA^*A,$$ where we have identified $\mathfrak{u}(M)$ and $\mathfrak{u}^*(M)$ through $\langle A,B\rangle=-\Tr(AB)$.  In combination with the polar decomposition this allows us to write down a preferred section $s$ for $\mu$.  Indeed, if $B\in\mu(\End(M))$, i.e. if $-iB$ is  positive semi-definite, then we can just put \begin{equation}\label{section}s(B)=\sqrt{-iB}.\end{equation}

\subsubsection{Cartan decompositions} \label{CartDecomp}
The global Cartan decomposition gives a diffeomorphism
\begin{equation}\label{globalcartan}K\times \mathfrak{k}\overset{\cong}{\longrightarrow} G:(k,x)\mapsto ke^{ix}.\end{equation} 
The Cartan decomposition is compatible with the polar decomposition, in the following sense: for every unitary representation $\phi:K\rightarrow \End(M)$ (which induces a complex representation of $G$, also denoted by $\phi$) we have that 
the polar decomposition of $ke^{ix}$ is given by $$U=\phi(k)\ \ \ \text{ and }\ \ \ P=\phi(e^{ix})=e^{i\, d\phi(x)}.$$
A straightforward corollary of the global Cartan decomposition is the $KAK$-decomposition: every element of a $G$ can be written as $g=k_1ak_2$, where $k_1, k_2\in K$ and $a\in A$, the (real-analytic) abelian connected subgroup of $G$ whose Lie algebra $\mathfrak{a}$ occurs in the Iwasawa decomposition of $G$.  If confusion is possible we shall denote this $A$ factor of $G$ as $A_G$.

\subsection{Symplectic structures on $G$ and $T^*K$}\label{sympG}

As the group $G$ and its symplectic counterpart $T^*K$ play such a central role in our exposition, we summarize some basic properties and conventions about them here.

We choose a maximal compact subgroup $K \subset G$ with maximal compact torus $\mathbb{T} \subset T$. 
We identify $\mathfrak{k}$ with the $\mathcal{L}$-invariant vector fields on $K$, which in turn induces an identification $T^*K\cong K \times \mathfrak{k}^*$. Using this we have \begin{equation*}\mathcal{L}_h(k,v)=(hk,v)\ \ \ \text{ and }\ \ \ \mathcal{R}_h(k,v)=(kh^{-1},hv).\end{equation*}   The momentum maps for the $\mathcal{L}$- and $\mathcal{R}$-actions of $K$ on $T^*K$ (equipped with its canonical symplectic structure) are given by \begin{equation*}\mu_{\mathcal{L}}(k,v)=-kv\ \ \ \text{ and }\ \ \ \mu_{\mathcal{R}}(k,v)=v\end{equation*} respectively.  Note that these actions and their momentum maps are intertwined by the symplectic involution $\iota$, given by $\iota(k,v)=(k^{-1}, -kv)$.

We also want to consider $G$ as a K\"ahler space, with a corresponding equivariant symplectomorphism $G\cong T^*K$.  There are a number of ways that this can be done, e.g. one could choose a $K$-invariant inner product to identify $\mathfrak{k}\cong \mathfrak{k}^*$, and then use the Cartan decomposition (see Section \ref{CartDecomp}) to obtain $$G\cong K\times\mathfrak{k}\cong K\times \mathfrak{k}^*\cong T^*K.$$

However, this is not what we shall use here.  Rather, we shall use identifications $G\cong T^*K$ obtained through  faithful representations $M$ of $G$ that realize $G$ as closed subvarieties of $\End(M)$.  Indeed, as explained in \cite[Appendix A]{MT}, for any symplectic structure on $G$, such that the $\mathcal{L}$- and $\mathcal{R}$-actions are Hamiltonian with momentum maps respectively $\mu_{\mathcal{L}}$ and $\mu_{\mathcal{R}}$, and such that there exists a $(K\times K)$-equivariant projection $\Pi:G\rightarrow K$ whose fibers are Lagrangian, there exists a unique symplectomorphism \begin{equation}\label{GTK}G\cong K\times \mu_{\mathcal{R}}(G)\subset T^*K:g\mapsto(\Pi(g),\mu_{\mathcal{R}}(g))\end{equation} that intertwines the momentum maps for both $\mathcal{L}$- and $\mathcal{R}$-actions.  Moreover, if we choose a faithful representation $G\hookrightarrow \End(M)$ (where $M$ has a $K$-invariant Hermitian product) and restrict the K\"ahler form (\ref{formEnd}) to $G$, the projection $\Pi$ onto the first factor under the global Cartan decomposition (\ref{globalcartan}) has the desired properties.  Finally, if the faithful representation actually realizes $G$ as a closed subvariety of $\End(M)$, then it follows from \cite[Theorem 4.9]{Sj2} and the algebraic Peter-Weyl decomposition of $G$ that $\mu_{\mathcal{R}}:G\rightarrow \mathfrak{k}^*$ is surjective, hence (\ref{GTK}) gives us an identification $G\cong T^*K$.

\subsection{Hamiltonian $K$-spaces}\label{hamiltonianspaces}

The occurrence of the imploded cotangent bundles $E_{\mathcal{R}}(T^*K)$ and $E_{\mathcal{L}}(T^*K)$ requires us to work with general Hamiltonian $K$-spaces.  We let $X$ be a connected Hausdorff topological space with a locally finite decomposition $X = \coprod_{\sigma \in \Sigma} X_{\sigma}$ into connected manifolds, each equipped with a  symplectic form $\omega_{\sigma}.$ A Hamiltonian $K$-action on $X$ is a continuous, decomposition preserving action which is smooth on each $X_{\sigma}$, together with a continuous, equivariant map $\mu: X \to \mathfrak{k}^*$ which restricts to a momentum map on each $X_{\sigma} \subset X$ with respect to $\omega_{\sigma}.$ For the purposes of this paper we will also require that $X$ has a unique top piece $X^o$, such that $X = \overline{X^o}.$

For any Hamiltonian $K$-space $X$, there is an isomorphism $a:  \big( X\times T^*K \big)\q_{\! 0} K \cong X$, cfr. \cite[Lemma 4.8]{GJS}.  Here $K$ acts diagonally on $X\times T^*K$, using the $\mathcal{L}$-action on $T^*K$.  The map $a$ is computed on $\mu^{-1}(0) = \big\{ \left(x, (k, v)\right) \big| \mu(x) = kv\big\}$ by sending $\left(x, (k, v)\right)$ to $k^{-1}x$.  If we also consider the map $b: X \to X \times T^*K$  which sends a point $x$ to $\big(x,(1, \mu(x))\big)$ (and whose image is in the level-set of the momentum map for the diagonal $K$-action), we see that $a$ is the inverse to $b$ composed with the projection map onto $\left(X\times T^*K\right)\q_{\! 0}K$.

Like Hamiltonian manifolds, Hamiltonian $K$-spaces have symplectic reductions, defined to be the topological space $X\q_{\! 0} K = \mu^{-1}(0)/K.$ This space decomposes into the (possibly singular) symplectic reductions $X_{\sigma}\q_{\! 0} K$, which further decompose into manifolds by \cite{SjL}. Each $X_{\sigma}\q_{\! 0} K$ has a unique open, dense top component to its decomposition, so $X\q_{\! 0} K$ possesses a dense open piece, which comes with its reduced smooth symplectic structure.  For a product group $K \times L$,  it is straightforward to check (see \cite[Section 4]{SjL}) that reduction in stages holds for Hamiltonian $(K\times L)$-spaces, in particular $X\q_{\! 0} K$ is a Hamiltonian $L$-space.  Reduction at non-zero levels of the momentum map is performed by the shifting trick $X\q_{\lambda} K = \big(X \times \mathcal{O}_{\lambda}\big) \q_{\! 0} K$, as in the smooth case, where $\mathcal{O}_{\lambda}$ is the co-adjoint orbit through $\lambda$.

We let $\Delta = \cup_{I} \Delta_I$ be the decomposition of the Weyl chamber into relatively open faces with $\Delta^o$ the interior, and we let $K_I \subset K$ be the compact subgroup which fixes all $w \in \Delta_I$. For any Hamiltonian $K$-space we will consider the decomposition by momentum image: for any $I$, let $X_I \subset X$ be the inverse image of $K\Delta_I \subset \mathfrak{k}^*.$ Each of these subspaces has a $K$-action, and can be stratified into $K$-stable manifolds by intersecting $X_I$ with the components of the stratification $X_{\sigma} \subset X$ to obtain $X_{I, \sigma} = X_I \cap X_{\sigma}$ .   For a Hamiltonian $K$-manifold we say that the face $\Delta_I \subset \Delta$ is the principal face if it is the highest face under inclusion with a non-empty intersection with $\mu(X)$.  The principal face is the subject of the Cross Section Theorem, see \cite{LMTW} and \cite{GJS}. 

\begin{theorem}[Cross Section Theorem]
For any Hamiltonian $K$-manifold, $(X, \omega, \mu)$ with principal face $\Delta_I$, the subspace $X_I$ is smooth and dense in $X$.  Furthermore, the group $[K_I, K_I]$ acts trivially on $X_I,$ and $X_I$ is symplectomorphic to $K\times_{K_I} \mu^{-1}(\Delta_I)$. 
\end{theorem}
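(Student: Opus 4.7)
The plan is to combine the Marle-Guillemin-Sternberg (MGS) local normal form with Kirwan's nonabelian convexity theorem, essentially along the lines of Lerman-Meinrenken-Tolman-Woodward.

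First I would fix $\xi$ in the relative interior of $\Delta_I$, so $K_\xi=K_I$, and pick $x\in\mu^{-1}(\xi)$. The MGS local normal form produces a $K$-equivariant symplectomorphism between a neighborhood of the orbit $Kx$ in $X$ and a neighborhood of the zero section in the model $K\times_{K_x}\bigl((\mathfrak{k}_I/\mathfrak{k}_x)^*\oplus V\bigr)$, where $V$ is the symplectic slice at $x$. Standard slice arguments then yield a connected $K_I$-invariant open neighborhood $U\subset\mathfrak{k}_I^*$ of $\xi$ such that $K\cdot U$ is open in $\mathfrak{k}^*$ and $K\cdot U\cong K\times_{K_I}U$. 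The transversality built into the MGS model implies that $Y:=\mu^{-1}(U)$ is a $K_I$-invariant symplectic submanifold with moment map $\mu|_Y$, and gives a $K$-equivariant Hamiltonian identification $\mu^{-1}(K\cdot U)\cong K\times_{K_I}Y$.

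The principal face hypothesis now pins down $\mu(Y)$. By Kirwan's convexity theorem the polytope $\mu(X)\cap\Delta$ is convex, and the assumption that $\Delta_I$ is the highest face meeting it forces $\mu(X)\subset K\cdot\overline{\Delta_I}$. For $y\in Y$ we therefore have $\mu(y)\in U\cap K\cdot\overline{\Delta_I}$. Since $\overline{\Delta_I}$ sits inside $\mathfrak{z}(\mathfrak{k}_I)^*$ and is pointwise $K_I$-fixed, and $U$ is a slice for the coadjoint action, this intersection equals $U\cap\overline{\Delta_I}\subset\mathfrak{z}(\mathfrak{k}_I)^*$. Hence the $[\mathfrak{k}_I,\mathfrak{k}_I]^*$-component of $\mu|_Y$ vanishes identically; by the defining equation of the moment map together with non-degeneracy of $\omega|_Y$, the fundamental vector field of every element of $[\mathfrak{k}_I,\mathfrak{k}_I]$ vanishes on $Y$. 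Since $[K_I,K_I]$ is connected, it acts trivially on $Y$.

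The remaining assertions fall out. Inside $Y$, the set $\mu^{-1}(\Delta_I)\cap Y=\mu^{-1}(U\cap\Delta_I)$ is the open locus where $\mu$ hits the open face, hence a smooth symplectic submanifold of $Y$. Covering $\Delta_I$ by such slices as $\xi$ varies gives the global smooth structure on $\mu^{-1}(\Delta_I)$, and the slice isomorphism globalizes to $X_I\cong K\times_{K_I}\mu^{-1}(\Delta_I)$, with the trivial $[K_I,K_I]$-action inherited from $Y$. Density of $X_I$ in $X$ follows once again from convexity: the complement $\mu^{-1}(K\cdot\partial\overline{\Delta_I})$ projects to the relative boundary of the Kirwan polytope, and through the MGS model is a locally closed subset of $X$ of strictly smaller dimension, hence nowhere dense. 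The main technical point, to my mind, is ensuring the vanishing of the $[K_I,K_I]$-moment map \emph{everywhere on} $Y$, not just on $\mu^{-1}(\Delta_I)$ itself; this global control is supplied precisely by the convexity theorem via $\mu(X)\subset K\cdot\overline{\Delta_I}$, and without it the cross-section $Y$ could contain points with nonzero $[\mathfrak{k}_I,\mathfrak{k}_I]^*$-component and the action would fail to be trivial on all of $Y$.
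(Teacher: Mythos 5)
The paper does not prove the Cross Section Theorem; it states it and cites Lerman--Meinrenken--Tolman--Woodward \cite{LMTW} and Guillemin--Jeffrey--Sjamaar \cite{GJS}. Your proposal reconstructs the LMTW line of argument: MGS local normal form to produce local cross-sections, Kirwan's convexity theorem to force $\mu(X)\subset K\cdot\overline{\Delta_I}$, and the moment-map identity plus non-degeneracy of $\omega|_Y$ to kill the $[\mathfrak{k}_I,\mathfrak{k}_I]$-fundamental vector fields on the cross-section. That part of the logic is sound, provided the slice $U$ is taken small enough that $U\cap K\cdot\overline{\Delta_I}=U\cap\overline{\Delta_I}$ (which needs $U$ to absorb the nearby points of $\overline{\Delta_I}$ that the orbits through $U$ hit; this is the standard care one must take and you gesture at it correctly).

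The weak link is the density claim. You assert that $\mu^{-1}(K\cdot\partial\overline{\Delta_I})$ is ``a locally closed subset of strictly smaller dimension'' because it projects to the relative boundary of the Kirwan polytope. But a moment map is not in general a submersion, and lower-dimensional image does not automatically produce a lower-dimensional (let alone nowhere dense) preimage; one can easily imagine a map collapsing an open set onto a boundary facet. The correct route exploits the structure you have already built: on each local cross-section $Y=\mu^{-1}(U)$, after quotienting by the trivially-acting $[K_I,K_I]$, the map $\mu|_Y$ is a moment map for the compact torus $Z_I=K_I/[K_I,K_I]$ with values in $\mathfrak{z}(\mathfrak{k}_I)^*$. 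By the local normal form for torus moment maps (or the openness onto image established by Atiyah--Guillemin--Sternberg and its local version), the preimage of the open face, $\mu^{-1}(U\cap\Delta_I)=Y\cap\mu^{-1}(\Delta_I)$, is open and dense in $Y$. Sweeping by $K$ then gives density of $X_I=K\cdot\mu^{-1}(\Delta_I)$ in $\mu^{-1}(K\cdot U)$, and covering $X$ by such saturated neighborhoods finishes the job. Without this step the density is not established; with it, your proof matches the cited references.
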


Though we will not use it, we remark for completeness that the Cross Section Theorem can be extended to all faces, see \cite[Theorem 6.1]{Mein}.
If $X$ is a general Hamiltonian $K$-space, we may apply the Cross Section Theorem to $X^o$, producing $X^o_{ I} \subset X^o$, the principal subspace.  As $X^o_I$ is dense in $X^o$, it follows that $X^o_I$ is dense in $X$.  As a consequence, the momentum image $\mu(X^o_{ I})$ is dense in $\mu(X)$, so for any other face $\Delta_J \subset \Delta$ with $X_J \neq \emptyset,$ we must have $\Delta_J \subset \Delta_I.$  In this sense, a Hamiltonian $K$-space has a principal face, and a version of the Cross Section Theorem holds.

\subsection{GIT and reduction}

Throughout the paper we will work with Hamiltonian $K$-spaces which also have the structure of semi-projective (i.e. projective over an affine) algebraic $G$-varieties, and we need to know that both notions of contraction --    horospherical contractions from (\ref{fromone}) and symplectic contraction from (\ref{fromtwo}) -- agree on such a space.   To be precise, we consider closed, irreducible $G$-varieties $X\subset M\times \mathbb{P}(W)$, where both $M$ and $W$ are $G$-representations equipped with unitary $K$-representation structures. 
The inner products $\langle -, -\rangle$ on $M$ and $W$ induce
a symplectic form $\omega = -\Im\langle-, -\rangle_M+\omega_{\operatorname{FS},W}$ on $M\times \mathbb{P}(W)$, where the latter summand is the Fubini-Study form on $\mathbb{P}(W)$, and a momentum map $\mu_{M\times \mathbb{P}(W)}$.   Note that both projective and affine varieties are special cases hereof.  
Such an $X$ is canonically stratified into smooth, $G$-stable components, each of which inherits a symplectic form and a Hamiltonian $K$-action.  There is a dense, open top component $X^o \subset X$, the complement of the singular locus, hence this equips $X$ with the structure of a Hamiltonian $K$-space. All topological statements should be understood to hold with respect to the analytic topology (though most often are still true for the Zariski topology).

We refer the reader to \cite{Mum} for the definitions of the GIT-quotient $X\q_{\mathbf{L}} G$ of a $G$-variety
with respect to a $G$-linearized relatively ample line bundle $\mathbf{L}.$  We will suppress the linearized bundle $\mathbf{L}$ when
it is clear from context.  We frequently take GIT-quotients of affine varieties $X$ with respect to a torus $T$ and
a choice of character $\eta: T \to \C^*$ which linearizes the action of $T$ (i.e. lifts the action of $T$ on $X$ to the trivial line bundle over $X$ in a possibly non-trivial way).  In this case we write $X\q_{\eta} T.$

For what follows we refer the reader to \cite{SjL} and the book \cite{Mum}.    A point $v \in X$ is said to be analytically semistable ($v \in X^{\an}$) if $\overline{Gv} \cap \mu^{-1}(0) \neq \emptyset$ (here $\mu$ is the restriction of the momentum map for the action of $K$ on $M\times \mathbb{P}(W)$ to $X$).  A point $v \in X$ is said to be algebraically semistable ($v \in X^{\alg}$) with respect to the trivial bundle on $M$, if there is an invariant section which does not vanish at $v$.  For closed, $G$-stable subvarieties $X$, it follows that $X^{\alg} = X \cap \left(M\times \mathbb{P}(W)\right)^{\alg},$ and $X^{\an} = X \cap \left(M\times \mathbb{P}(W)\right)^{\an}$.

For both $X^{\an}$ and $X^{\alg}$ there is a notion of the extended orbit equivalence relation, where $x, y \in X$ are identified if $\overline{Gx} \cap \overline{Gy}\cap X^{\an/\alg} \neq \emptyset.$ We let $X^{\an}\q G$ denote the quotient space by this relation.  The same relation is used to define the GIT-quotient, $X^{\alg}\q G.$  When the momentum map is  \emph{admissible} (see \cite{Sj1}, \cite{Sj2}), the inclusion $\mu^{-1}(0) \subset X^{\an}$ induces a homeomorphism $X^{\an}\q G = \mu^{-1}(0)/K.$ A homeomorphism between the GIT-quotient $X^{\alg}\q G$ and the symplectic reduction $\mu^{-1}(0)/K$ can then be established by showing $X^{\alg} = X^{\an}.$

\subsection{Symplectic implosion}

Our construction of the contraction $X^{\sc}$ makes use of the concept of symplectic implosion, due to Guillemin, Jeffrey, and Sjamaar, \cite{GJS}.  For a Hamiltonian $K$-space $X$, the implosion $EX$ is constructed as the image of $\mu^{-1}(\Delta)$ under the equivalence relation defined by taking the quotient of each subspace $\mu^{-1}(\Delta_I) \subset \mu^{-1}(\Delta)$ by $[K_I, K_I],$ where $K_I$ is the stabilizer (under the co-adjoint representation) of $\Delta_I$.

The implosion(s) of the cotangent bundle $T^*K$ play a universal role in this theory.  We denote the implosion
of $T^*K$ by the right, respectively left $K$-actions by $E_{\mathcal{R}}(T^*K)$ and $E_{\mathcal{L}}(T^*K)$. These are isomorphic Hamiltonian
$(K\times \mathbb{T})$-spaces under the symplectic involution $\iota: T^*K \to T^*K$.  A point in $E_{\mathcal{R}}(T^*K)$ is a pair $(k, w)$
such that $w \in \Delta_I$ and $k \in K/[K_I, K_I].$  The momentum map for the $\mathbb{T}$-action on this space is $\mu_{\mathbb{T}}(k, w) = w.$  A point in $E_{\mathcal{L}}(T^*K)$ is a pair $(k, v)$ such that $-k v \in -\Delta_I \subset -\Delta$, modulo the left action of $[K_I, K_I].$ The residual left $\mathbb{T}$-action on this space has momentum map $\mu_{\mathcal{L}}(k, v) = -k v.$  For any Hamiltonian $K$-space, there are isomorphisms of Hamiltonian $\mathbb{T}$-spaces,

\begin{equation}
EX = K _{\ 0}{\! \ql} \big( X \times E_{\mathcal{R}}(T^*K)\big) = \big(X \times E_{\mathcal{L}}(T^*K)\big)\q_{\! 0} K.\\
\end{equation}

\noindent
For this reason, $E_{\mathcal{R}}(T^*K)$ is referred to as the universal imploded cross-section.  

\begin{remark}In \cite[\S 6]{GJS}, the symplectic implosion $E_{\mathcal{L}}(T^*K)$ is identified with the affine variety $G\q U$, also known as the \emph{basic affine space}, equipped with a suitable K\"ahler metric (to be precise, this is done for $K$ simple and simply connected, but the same argument can be shown to hold in general).  We shall not directly need this identification, see Remark \ref{noneedforGJS6}.
\end{remark}

\section{Horospherical contraction and the Vinberg monoid}

\subsection{Valuations, filtrations, and flat families}

Throughout the paper we will make use of discrete valuations on the coordinate rings of the varieties we consider (for general background regarding this see e.g. \cite[Chapter 9]{AtiMcD}).  For a domain $A$ over $\C$ we require any valuation to satisfy $v(a + b) \geq \max\{v(a), v(b)\},$  with $v(0) = -\infty$ and $v(c) = 0$ for any $c \in \C.$ Any such valuation $v:A \to \Z$ defines an increasing filtration on $A$ by setting $\mathcal{F}^v_{\leq m} = \big\{a\, \big|\, v(a) \leq m\big\} \subset A.$  The filtrations that come from this construction are distinguished by the property that their associated graded algebras $\gr_v(A)$ are also domains -- this is a consequence of the equation $v(ab) = v(a) + v(b).$ 

Moreover, given an increasing filtration $\mathcal{F}$ on a domain $A$ with this property, it is easy to check that $v_{\mathcal{F}}(a) = \min\big\{m\, \big|\, a \in \mathcal{F}_{\leq m}\big\}$ defines a valuation on $A$, and that this construction is inverse to $v \mapsto \mathcal{F}^v.$ With this in mind, we will use the terminology of valuations and filtrations interchangeably. 

Let $v:A \to \Z$ be a valuation as above with $v(A) \subset \Z_{\geq 0}$. The associated Rees algebra $R_v(A) = \bigoplus_{m \in \Z}\mathcal{F}^v_{\leq m}$ is a $\Z$-graded algebra over $\C$ with the following properties: 

\begin{enumerate}
\item $R_v(A)$ is a flat $\C[t]$-algebra, where $t: R_v(A) \to R_v(A)$ maps a graded component $\mathcal{F}^v_{\leq m}$ to its isomorphic copy in $\mathcal{F}^v_{\leq m+1}$;
\item $\frac{1}{t}R_v(A) \cong A[t, \frac{1}{t}]$;
\item $R_v(A)/t R_v(A) \cong \gr_v(A)$ as graded rings.\\
\end{enumerate}

We will be working with semi-projective varieties (recall that these are varieties that are projective over an affine --- both affine and projective varieties being examples).   These can all be characterised as $\Proj \left(\bigoplus_i A_i\right)$, for some graded algebra $\bigoplus_i A_i$ (which will be a domain as the varieties are irreducible), and are projective over $\Spec(A_0)$.  The above discussion carries over to this setting: the valuation will give $\mathcal{F}^v_{i,\leq m}\subset A_i$ for each $i$, and we put $R_v(A)= \bigoplus_{i,m} \mathcal{F}_{i,\leq m}^v$.  We consider this as a graded ring by the $i$-grading, and taking $\Proj$ of this gives us a family over $\mathbb{C}$.  This is a flat degeneration $\Proj\left(\bigoplus_i A_i\right)\Rightarrow \Proj \left(\bigoplus_i \left(\bigoplus_m \mathcal{F}^v_{i,\leq m}/ \mathcal{F}_{i,\leq m-1}^v \right)\right)$.

\subsection{The coordinate rings $\C[G]$ and $\C[G\q U_+]$, and algebraic horospherical contraction}\label{horcontr}

We fix a linearly reductive complex group $G$ with maximal torus ${T}$ and Weyl chamber $\Delta \subset \mathfrak{t}^*.$  We will construct the algebraic horospherical contraction $G^{\hc}$ first, and then use this to construct the contraction $X^{\hc}$ for any semi-projective algebraic variety with a rational $G$-action. The group $G$ is an affine complex variety, and its coordinate ring $\C[G]$ comes with a well-known isotypical decomposition, an algebraic version of the Peter-Weyl theorem: 

\begin{equation}
\C[G] = \bigoplus_{\lambda \in \mathfrak{X}^+_G} M_{\lambda} \otimes M^*_{\lambda}.\\
\end{equation}

Recall that the dominant weights in $\Delta$  come with a partial ordering, where $\eta < \lambda$ when $\lambda - \eta$ can be expressed as a sum of positive roots. Multiplication in the algebra $\C[G]$ is then computed as follows.  First one identifies $M_{\lambda}\otimes M^*_{\lambda}$ with the vector space $\Hom(M_\lambda , M_\lambda)$. 
The tensor product $\Hom(M_{\lambda}, M_{\lambda}) \otimes \Hom(M_{\eta}, M_{\eta})$ is then expanded as follows:

\begin{equation}
\Hom(M_{\lambda}, M_{\lambda}) \otimes \Hom(M_{\eta}, M_{\eta}) = \bigoplus_{\mu, \mu' < \lambda + \eta} \Hom(N_{\lambda, \eta}^{\mu}, N_{\lambda, \eta}^{\mu'}) \otimes \Hom(M_{\mu}, M_{\mu'}).\\
\end{equation}

Here $N_{\lambda, \eta}^{\mu}$ is the multiplicity space for the representation $M_{\mu}$ appearing in the tensor product $M_{\lambda} \otimes M_{\eta}$.  
One projects onto the spaces where $\mu' = \mu,$ then maps each component of this decomposition into $\C[G]$ by sending $\psi \otimes f \in \Hom(N_{\lambda, \eta}^{\mu}, N_{\lambda, \eta}^{\mu}) \otimes \Hom(M_{\mu}, M_{\mu})$ to $\Tr(\psi)f \in  \Hom(M_{\mu}, M_{\mu})$, where $\Tr(\psi)$ denotes the trace.  The multiplicity space $N_{\lambda, \eta}^{\lambda + \eta}$ is $1$-dimensional, 
therefore this direct sum decomposition has a uniquely determined ``top" component, $\Hom(M_{\eta + \lambda}, M_{\eta + \lambda}).$ 

Now we identify the isotypical space $\Hom(M_{\eta}, M_{\eta}) \subset \C[G]$ with $M_{\eta} \otimes M_{\eta}^*$, and we let $U_+ \subset G$ be the maximal unipotent subgroup corresponding to the chosen Weyl chamber $\Delta.$   The (non-reductive) GIT-quotient $G\q U_+ = \Spec(\C[G]^{U_+})$
plays a key role in the description of the horospherical contraction.  The invariants in an isotypical component $M_{\eta} \otimes M_{\eta}^* \subset \C[G]$ are those tensors of the form $v\otimes v_{\eta^*},$ where $v_{\eta^*}$ is the highest weight vector of $M_{\eta}^*.$ As a result, $\C[G]^{U_+}$ is identified with the direct sum of the irreducible representations of $G.$

\begin{equation}
\C[G\q U_+] = \bigoplus_{\eta \in \mathfrak{X}_G^+} M_{\eta}.\\
\end{equation}

\noindent
This algebra is equipped with the Cartan multiplication operation, which is computed on a tensor product by projection onto the highest weight component, $M_{\eta} \otimes M_{\lambda} \to M_{\eta + \lambda}$. In particular $\C[G\q 
U_+]$ is graded by the dominant weights $\eta,$ and has the structure of a rational $G\times T$ algebra.

We now summarize the results of Popov \cite{P} (see also the books of Grosshans \cite{Gr} or Timashev \cite{Tima}, and \cite{M}). One can place a partially ordered filtration on $\C[G]$ by porting over the ordering by the dominant weights on the components $\Hom(M_{\lambda}, M_{\lambda})$.  This is turned into a filtration by non-negative integers by choosing a dominant coweight $h \in \Delta^{\vee}$ in the dual Weyl chamber (we shall moreover choose $h$ to be regular, i.e. to lie in the interior of $\Delta^{\vee}$, though this is strictly speaking not necessary).   The subspace $\mathcal{F}^h_{\leq m} \subset \C[G]$ is then the sum of the $\Hom(M_{\lambda}, M_{\lambda})$ for which $\lambda(h) \leq m.$  Viewing weights $\lambda \in \mathfrak{t}^*$
as functionals, the coweights $h \in \Delta^{\vee} \subset \mathfrak{t}$ are those elements for which $\alpha(h) \geq 0$, for any positive root $\alpha.$ 
It follows that any $h \in \Delta^{\vee}$ gives a linear ordering on $\mathfrak{X}_G$ which respects the partial ordering on dominant weights, and furthermore if $h \in \Delta^{\vee}$ is chosen to be regular then $\lambda < \eta$ implies that $\lambda(h) < \eta(h).$  

\begin{theorem}[Popov]\label{ahcontract}
The filtration $\mathcal{F}^h$ on $\C[G]$ is $G\times G$-stable, with associated graded algebra a domain, in particular each $h \in \Delta^{\vee}$ defines a $G\times G$-stable valuation on $\C[G]$.  When $h \in \Delta^{\vee}$ is chosen to be regular, the associated graded algebra is isomorphic to $\C[G\q U_+ \times U_- \ql G]^{T},$ where the $T$-action is the diagonal action defined through the residual $T \times T$ action on $G\q U_+ \times U_- \ql G$.
\end{theorem}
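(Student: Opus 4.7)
My plan is to proceed in three stages. The $G\times G$-stability of each $\mathcal{F}^h_{\leq m}$ is immediate because each isotypical summand $\Hom(M_\lambda, M_\lambda) \subset \C[G]$ is itself $G\times G$-stable, so any sum of such summands is $G\times G$-stable. To verify that $\mathcal{F}^h$ is a ring filtration, I use the explicit multiplication rule recalled just before the theorem: the product $\Hom(M_\lambda, M_\lambda) \cdot \Hom(M_\eta, M_\eta)$ lands in $\bigoplus_{\mu \leq \lambda + \eta} \Hom(M_\mu, M_\mu)$. Since $h \in \Delta^{\vee}$ pairs non-negatively with every positive root, $\lambda + \eta - \mu$ being a non-negative sum of positive roots forces $\mu(h) \leq \lambda(h) + \eta(h)$, yielding $\mathcal{F}^h_{\leq m} \cdot \mathcal{F}^h_{\leq n} \subseteq \mathcal{F}^h_{\leq m+n}$.

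Next I compute the associated graded under the regularity hypothesis on $h$. Regularity means $\mu < \lambda + \eta$ (strict in the root-sum order) implies $\mu(h) < (\lambda+\eta)(h)$, so after passing to $\gr_{\mathcal{F}^h}(\C[G])$ only the top component $\Hom(M_{\lambda+\eta}, M_{\lambda+\eta})$ of the product survives. As a graded vector space $\gr_{\mathcal{F}^h}(\C[G]) = \bigoplus_\lambda \Hom(M_\lambda, M_\lambda)$, with the induced product given factorwise by Cartan projection onto the top-weight component $M_\lambda \otimes M_\eta \twoheadrightarrow M_{\lambda + \eta}$ and similarly on the duals.

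To match this with $\C[G\q U_+ \times U_- \ql G]^T$ I recall that $\C[G\q U_+] = \bigoplus_\lambda M_\lambda$ with Cartan multiplication, and the symmetric analysis for the left-$U_-$-invariants gives $\C[U_- \ql G] = \bigoplus_\mu M_\mu^*$. Both carry residual $T$-actions whose weights are opposite on matching pairs, so the diagonal $T$-invariants of the tensor product collapse the double sum to $\mu = \lambda$, giving $\bigoplus_\lambda M_\lambda \otimes M_\lambda^* \cong \bigoplus_\lambda \Hom(M_\lambda, M_\lambda)$. Multiplication, computed factorwise by Cartan projection, matches the graded product just described. The domain property then comes for free: $G\q U_+$ and $U_- \ql G$ are irreducible affine varieties, so their tensor product coordinate ring is a domain, and the $T$-invariants of a domain remain a domain.

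The main obstacle is the bookkeeping around the $T$-weight conventions: one must check that the residual $T$-weights on $\C[G\q U_+]$ and $\C[U_- \ql G]$ are genuinely opposite (mediated by the longest Weyl element $w_0$), so that the diagonal-$T$-invariants select exactly the pairs $\mu = \lambda$ rather than some twisted matching. This is routine but delicate, and one must be consistent about which side of $G$ is acted on by $U_+$ versus $U_-$ and how this translates into the weight gradings on either factor.
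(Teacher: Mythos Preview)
The paper does not supply its own proof of this theorem: it is stated as a summary of Popov's results, with pointers to \cite{P}, \cite{Gr}, \cite{Tima}, and \cite{M}, and the text moves directly on to defining $G^{\hc}$. So there is no in-paper argument to compare against.

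Your proposal is the standard proof and is correct in outline. The $G\times G$-stability and the filtration property follow exactly as you say from the multiplication rule spelled out in the paragraphs preceding the theorem, and regularity of $h$ is precisely what kills the lower components in the associated graded. The identification of $\gr_{\mathcal{F}^h}(\C[G])$ with the $T$-invariants in $\C[G\q U_+]\otimes \C[U_-\ql G]$ via matching Cartan multiplications is also right, and your domain argument (invariants in a domain are a domain) is clean. The weight-convention bookkeeping you flag is indeed the only place to be careful: the residual $T$-grading on $\C[G\q U_+]=\bigoplus_\lambda M_\lambda$ is by the highest weight of $M_\lambda^*$, namely $\eta^*=-w_0\eta$, and the corresponding grading on $\C[U_-\ql G]$ is the opposite one, so the diagonal invariants do pick out exactly the matched pairs. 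Once that is made precise your argument is complete.
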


\noindent
We will denote $\Spec\left(\C[G\q U_+ \times U_- \ql G]^{T}\right)$ as $G^{\hc}$, and refer to it as the horospherical contraction of $G$.  It is straightforward to verify that for a product group, $(G\times H)^{\hc} = G^{\hc} \times H^{\hc}$.

The $G\times G$-stability of the horospherical contraction for $G$ allows a straightforward construction for any semi-projective $G$-variety $X$. 
For $X$ affine, one considers the canonical identification of the coordinate algebra $\C[X]$ with the algebra of $G$-invariants $\C[X \times G]^G,$
where the action by $G$ is defined by the diagonal action on $X$ with the left action on $G.$  If we let $\mathcal{F}_X^h$ be the filtration on $\C[X]$ induced
by the inclusion $\C[X] \subset \C[X \times G]$, the $G\times G$ stability of $\mathcal{F}^h$ gives the following:

\begin{equation}
\gr_{\mathcal{F}_X^h}(\C[X]) = \gr_{\mathcal{F}_X^h}(\C[X \times G]^G) = \big(\C[X] \otimes \gr_{\mathcal{F}^h} \C[G]\big)^G = \C[X \times G^{\hc}]^G.\\
\end{equation}

In this way we obtain Popov's horospherical contraction $X^{\hc} = \Spec( \C[X \times G^{\hc}]^G)$ of an affine variety.  For $X$ semi-projective, with chosen linearization $\mathbf{L}$, one performs the same operation on the homogeneous coordinate ring $R_{\mathbf{L}} = \bigoplus_{m \geq 0} H^0(X, \mathbf{L}^{\otimes m})$:
\begin{equation}\label{generalhorcontr}
X^{\hc} = G \tensor[_{\mathbf{L}}]{\ql}{}\big(X \times G^{\hc}\big) =  \Proj\big(R_{\mathbf{L}} \otimes \C[G^{\hc}]\big)^G.\\
\end{equation} 
\noindent
The quotient variety $G\q U_+$ is universal with respect to GIT  $U_+$-quotients of $G$-varieties in the sense that $X\q_{\mathbf{L}} U_+$ can be computed as $G\tensor[_{\mathbf{L}}]{\ql}{}\big(X \times G\q U_+\big).$  This allows us to compute $X^{\hc}$ by a product $G \times T$-  GIT-quotient, or by bringing in $X\q_{\mathbf{L}} U_-$ and using a GIT-quotient by $T$:
\begin{equation}
X^{\hc} = G \tensor[_{\mathbf{L}}]{\ql}{} \big(X \times G^{\hc}\big) =  G \tensor[_{\mathbf{L}}]{\ql}{} \big(X \times G\q U_- \times U_+ \ql G\big)\q T = \big(X\q_{\mathbf{L}} U_- \times U_+ \ql G\big)\q T\\ 
\end{equation}
Just as in the case $X = G$, the horospherical contraction $X^{\hc}$ has an action of the maximal torus $T$.  The isotypical decomposition of the coordinate ring $\C[X]$ (or $R_{\mathbf{L}}$) is identical to that of the contraction, however the multiplication operation in the coordinate rings of the contractions has an additional grading by dominant weights.  In this way horospherical contraction adds additional torus symmetries to a $G$-variety while maintaining the characteristics of the space preserved under $G$-stable flat degeneration. 
The ability to define the horospherical contraction of a variety by way of the universal properties of the group variety $G$ makes both the definition and computation of this property more tractable (note, howeover, that the horospherical contraction of a variety can also be defined without reference to $G^{\hc}$, by similarly placing a filtration on the coordinate ring of the variety, and then switching to the associated graded).

\subsection{Construction of the Vinberg monoid $S_G$}

Let $G$ be a connected complex linearly reductive group.  The Vinberg monoid $S_G$ canonically associated with it (also known as the \emph{enveloping semigroup of} $G$) is a monoid object in the category of varieties.  Its group of units is the linearly reductive group (\emph{enh} standing for enhanced)$$\Genh=(G\times T_G^{\abs}) /Z_G,$$ where $T_G^{\abs}$ is the (abstract) maximal torus of $G$ and $Z_G$ is the center of $G$ (sitting anti-diagonally in $G\times T_G^{\abs}$).  We will also use the induced compact form $\Kenh=(K\times \mathbb{T}_K)/Z_G$ for $\Genh$, where $\mathbb{T}_K$ is the corresponding compact form of $T_G^{\abs}$.  
The Vinberg monoid can be described as a variety by specifying its coordinate ring $\mathbb{C}[S_G]$ as a subring of $\mathbb{C}[\Genh]$, in particular using the Peter-Weyl decomposition (as a $\Genh\times \Genh$-representation, not as an algebra) of the latter. Indeed, for $\lambda\in\mathfrak{X}_G^+$,  we denote the matrix-coefficients for the irreducible representation with highest weight $\lambda$ as $\mathbb{C}[G]_{\lambda}$. Then we have $$\mathbb{C}[\Genh]=\bigoplus_{(\lambda,\mu)\in \mathfrak{X}^+_{\Genh}} \mathbb{C}[\Genh]_{(\lambda,\mu)} $$ where $\mathfrak{X}_{\Genh}^+$ is the monoid of dominant weights of $\Genh$:
$$\mathfrak{X}^+_{\Genh}=\left\{(\lambda,\mu)\in \mathfrak{X}_G^+\times \mathfrak{X}_G\,\Big|\, \mu-\lambda \in \mathfrak{W} 
\right\},$$  where $\mathfrak{W}=\langle\ \alpha_i\ |\ i=1,\ldots,r\ \rangle$ is the root lattice of G, for $\alpha_i$  the simple positive roots.  
The Vinberg monoid is then defined to be the affine variety with \begin{equation}\label{vinbdef}\mathbb{C}[S_G]=\bigoplus_{(\lambda,\mu)\in \mathfrak{X}_{\Genh}^+\cap Q_G}  \mathbb{C}[\Genh]_{(\lambda,\mu)},\end{equation} where $Q_G$ is the cone in the real vector space $\mathfrak{X}_{\Genh}\otimes_{\mathbb{Z}}\mathbb{R}=\mathfrak{t}^*_{\Kenh}$ given by 
\begin{equation}\label{thecone}Q_G =\left\{ (\lambda,\mu)\in\Delta^{\vee}_{\Kenh}\subset\mathfrak{t}^*_{\Kenh}\, \Big|\, \mu-\lambda=\sum_{i}m_i\alpha_i\, \text{ with all }m_i\in[0,\infty)\right\}. \end{equation}  Vinberg shows \cite{Vi1} that the variety so defined is indeed a monoid with group of units $S_G^{\times}=\Genh$ as described.  

\begin{remark}Note that Vinberg's construction works in general for algebraically closed fields of characteristic zero, and $S_G$ has been constructed for algebraically closed fields of arbitrary characteristic (where the Peter-Weyl decomposition fails to hold) by Rittatore \cite{Rit} using the theory of spherical embeddings.  Though we are not aware of a published version of a more general discussion, the construction should moreover go through for split reductive groups over arbitrary fields.
\end{remark}

Of particular relevance for us is the abelization morphism: by taking the affine GIT-quotient $S_G\ \q \ G\times G$ one obtains an affine space that is characterized as the toric variety for the torus $T_G^{\abs}/Z_G$ with cone the positive Weyl chamber.  We will denote this as $\A_G$ (though it really only depends on the root system of $G$), with morphism $\pi_G:S_G\rightarrow \A_G$.  Vinberg shows that $\pi_G$ is flat, with integral fibers.  The central fiber $\pi_G^{-1}(0)$, a sub-semigroup of $S_G$, was dubbed the \emph{asymptotic semigroup of} $G$ by Vinberg (denoted $\As(G)$), and studied in \cite{Vi2} -- it is nothing more than the horospherical contraction $G^{\hc}$ we already used.

\subsection{Hamiltonian geometry of $S_G$}
We want to consider $S_G$ as a (stratified) symplectic space.  We shall do this by embedding $S_G$ into a matrix space (or rather a direct sum thereof), in the same vein as the discussion for $G$ in Section \ref{sympG}.  This is always possible (see e.g. \cite[Remark, page 169]{Vi1}): take a finite collection of generators $\rho_i$ of the monoid of weights inside $Q_G$ that was used to define $\mathbb{C}[S_G]$ above.  Each corresponding irreducible representation of $\Genh$ extends to all $S_G$, and if we combine them we realize $S_G$ as a closed submonoid of $\bigoplus_i \End(M_{\rho_i})$, equivariant for the $\mathcal{L}$- and $\mathcal{R}$-actions of $\Genh$.  
We can now equip each summand $\End (M_{\rho_i})$ with a K\"ahler structure as in (\ref{formEnd}), which is invariant under $\Kenh$. 
The embedding \begin{equation}\label{embedding}S_G\hookrightarrow \bigoplus_i \End (M_{\rho_i})\end{equation} thus endows $S_G$ with a K\"ahler structure.  
We now  also recall from \cite[Appendix B]{MT} that, for any such K\"ahler structure, we obtain a preferred section $s:\mu(S_G)=\Kenh Q_G\rightarrow S_G$ of the momentum map for the $\mathcal{R}$-action of $\Kenh$ on $S_G$, using the polar decomposition for matrices and the corresponding section as in (\ref{section}).

A direct but important consequence for us of this is that we get a unique symplectic version of the Vinberg monoid:
\begin{lemma}
There exists a stratified symplectic space, unique up to symplectomorphism, such that any choice of generators for the monoid $Q_G\cap \mathfrak{X}_{\Genh}$ induces a symplectomorphism with $S_G$ (with the K\"ahler form induced by the embedding).
\end{lemma}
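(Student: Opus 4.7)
The plan is to establish existence and uniqueness separately, with existence being immediate: for any generating set $\{\rho_i\}$ of the monoid $Q_G \cap \mathfrak{X}_{\Genh}$, the embedding (\ref{embedding}) together with the pullback of the (direct sum of) K\"ahler forms from (\ref{formEnd}) endows $S_G$ with the structure of a stratified K\"ahler space, the stratification coming from the natural $\Genh \times \Genh$-orbit decomposition of the Vinberg monoid, with each stratum inheriting a smooth K\"ahler form from the ambient direct sum of matrix spaces.

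For uniqueness, I would consider two generating sets $R, R'$ yielding K\"ahler forms $\omega_R, \omega_{R'}$ on $S_G$ with polar-decomposition sections $s_R, s_{R'}\colon \Kenh Q_G \to S_G$ of their respective $\mathcal{R}$-momentum maps $\mu_{\mathcal{R}}^R, \mu_{\mathcal{R}}^{R'}$. Both momentum maps have the same image $\Kenh Q_G \subset \mathfrak{k}_{\Kenh}^*$, which is determined intrinsically by the representation theory of $\Genh$. The plan is to construct a $(\Kenh \times \Kenh)$-equivariant stratified symplectomorphism $\psi\colon (S_G, \omega_R) \to (S_G, \omega_{R'})$ defined by $\psi(g) = \mathcal{R}_k(s_{R'}(v))$ whenever $g = \mathcal{R}_k(s_R(v))$ with $v = \mu_{\mathcal{R}}^R(g)$. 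Well-definedness requires that the $\mathcal{R}$-stabilizer of $s_R(v)$ equals that of $s_{R'}(v)$: both coincide with the stabilizer of $v$ under the coadjoint action, as a consequence of the unique positive-definite square root characterization underlying the polar-decomposition sections (which also ensures that $s_R$ and $s_{R'}$ are equivariant with respect to the action of the relevant isotropy).

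The main obstacle is verifying $\psi^*\omega_{R'} = \omega_R$ on each stratum. I would apply a stratum-by-stratum analogue of the uniqueness result from \cite[Appendix A]{MT} recalled in Section \ref{sympG}: on a given $\Genh \times \Genh$-stratum of $S_G$, any $(\Kenh \times \Kenh)$-invariant K\"ahler structure with prescribed momentum maps and admitting a polar-decomposition section transverse to the generic $\mathcal{R}$-orbit is uniquely determined, since the value of the symplectic form on vectors tangent to the image of the section is computed explicitly from the derivatives of $\mu_{\mathcal{R}}$, and is then extended to the whole stratum by $(\Kenh \times \Kenh)$-equivariance. Identifying $\omega_R$ with $\psi^*\omega_{R'}$ in this way on each stratum produces a stratumwise symplectomorphism. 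The subtle step will be checking compatibility across strata along closure relations, so that the stratumwise maps glue into a global stratified symplectomorphism; this follows from the continuity of $\psi$ inherited from the continuity of the sections $s_R, s_{R'}$ on $\Kenh Q_G$ and from the fact that the Vinberg monoid's stratification is controlled by the face structure of $Q_G$, along which the polar-decomposition section behaves continuously.
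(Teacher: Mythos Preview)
Your overall strategy---defining the comparison map via the polar-decomposition sections and $\Kenh$-equivariance---is the same as the paper's, but there are two technical slips and one significant simplification you are missing.

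First, the decomposition $g = \mathcal{R}_k(s_R(v))$ with $v = \mu_{\mathcal{R}}^R(g)$ does not parametrise $S_G$. Since $\mu_{\mathcal{R}}$ is $\mathcal{R}$-equivariant, $g = s_R(v)k^{-1}$ forces $\mu_{\mathcal{R}}(g) = k\cdot v$, so $k$ would have to lie in the coadjoint stabiliser of $v$; generically this only produces a torus orbit, not all of $S_G$. The polar decomposition reads $g = U\cdot s_R(\mu_{\mathcal{R}}(g))$ with $U\in\Kenh$, i.e.\ one must use the $\mathcal{L}$-action to move the section around, as the paper does in writing $x = k\,s_{S_G}(\mu(x))$.

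Second, even after this fix, your well-definedness argument is not right: the $\mathcal{L}$-stabiliser of $s_R(v)$ in $\Kenh$ is \emph{not} the coadjoint stabiliser of $v$ (already on $\Genh$ the former is trivial while the latter contains the maximal torus). What is actually needed is that the $\Genh\times\Genh$-orbit through $s_R(v)$ is determined by the face of $Q_G$ containing $v$, independently of the embedding; this is what the paper extracts from \cite[\S 0.6]{Vi1}, and it is enough to match the two stabilisers.

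Third, and this is the main point: your stratum-by-stratum verification of $\psi^*\omega_{R'}=\omega_R$ invokes an unproved ``stratum analogue'' of \cite[Appendix A]{MT} for each $\Genh\times\Genh$-orbit and then worries about gluing. The paper avoids all of this. Since $\Genh$ is dense in $S_G$ and its $\mu_{\mathcal{R}}$-image (the orbit of $Q_G$ minus certain essential faces) is the same for both embeddings, the restriction of the comparison map to $\Genh$ is exactly the situation covered by \cite[Appendix A]{MT} for the group $\Genh$: both K\"ahler structures identify $\Genh$ canonically with an open subset of $T^*\Kenh$, so the map is a symplectomorphism there. The lower strata then follow by continuity. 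You should replace the stratum-by-stratum plan with this density argument.
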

We shall abuse notation and also refer to this symplectic space as $S_G$.  Remark that the above lemma is restricted to the symplectic structure - the K\"ahler structure is not unique.
\begin{proof}It suffices to write down a canonical symplectomorphism $S_G\rightarrow \widetilde{S_G}$, where $S_G,\widetilde{S_G}$ are two copies of $S_G$ equipped with symplectic structures from different embeddings (\ref{embedding}). By \cite[Theorem 4.9]{Sj2}, the \emph{image} of the momentum map of $\Kenh$ acting (by $\mathcal{R}$) on $S_G$ (though not the momentum map itself) is entirely determined by the weights occuring in $\mathbb{C}[S_G]$.  By the very construction of the Vinberg monoid, this is given by the  union of $\Kenh$-coadjoint orbits of elements in the cone $Q_G$, which is independent of the choice of the $\rho_i$.   Moreover, it also follows from the discussion in \cite[\S 0.6]{Vi1} that $G\times G$ orbits in $S_G$ always get mapped to (coadjoint orbits of) the same faces of $Q_G$.  Hence we can define the desired map using the corresponding sections $s_{S_G}$: $$\Xi: S_G\rightarrow \widetilde{S_G}: x=ks_{S_G}(\mu(x))\mapsto ks_{\widetilde{S_G}}(\mu(x)).$$  This clearly defines a homeomorphism, and we just need to show it is a symplectomorphism.
From this, it suffices to remark that from \cite[\S 0.6]{Vi1} it also clear what the image under this momentum map of $\Genh\subset S_G$ is -- namely, the $\Kenh$ orbit (through the co-adjoint action) of $Q_G$ minus the essential faces other than $O_{\Omega,\Omega}$ (we refer to \cite{Vi1} for this notation).  This in turn, by \cite[Appendix A]{MT}, uniquely maps $\Kenh$ into $T^*\Kenh$.  This shows that $\Xi$ is a symplectomorphism when restricted to $\Genh\subset S_G$.  The rest now follows from continuity.
\end{proof}

Remark that $G\subset S_G$ under this correspondence is $(K\times K)$-equivariantly isomorphic to $T^*K$, from the discussion in Section \ref{sympG}.  

\begin{remark}Note that the properties above -- the existence of an intrinsic symplectic structure, plus a section of the momentum map -- hold true for arbitrary (normal) complex reductive monoids, but we will only be concerned with $S_G$.
\end{remark}

\subsection{Horospherical contraction}
Vinberg's motivation for studying $S_G$ came from the study of reductive monoids, but we will regard $\pi_G:S_G\rightarrow \A_G$ mainly as a degeneration of $G$.  Indeed, it can be regarded as a universal horospherical contraction of $G$:

\begin{proposition}\label{basechange}
Every horospherical contraction of $G$ is induced by $S_G$ through a base change 
\begin{center}
\begin{tikzcd}
\A^1\times_{\A_G} S_G \ar{r} \ar{d}{\pi_h} & S_G\ar{d}{\pi_G}
 \\ \A^1\ar{r} & \A_G,
\end{tikzcd}
\end{center}
in the sense that the family corresponding to the filtration $\mathcal{F}^h$ via the Rees algebra construction is isomorphic to the family $\pi_h$.
\end{proposition}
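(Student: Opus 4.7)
The plan is to compare coordinate rings directly, identifying $\C[\A^1 \times_{\A_G} S_G] = \C[t] \otimes_{\C[\A_G]} \C[S_G]$ with the Rees algebra $R_h(\C[G])$ via the Peter-Weyl decomposition (\ref{vinbdef}).

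First I would unpack the abelianization. Taking $(G \times G)$-invariants in (\ref{vinbdef}) kills all $\C[\Genh]_{(\lambda, \mu)}$ with $\lambda \neq 0$, so $\C[\A_G] = \bigoplus_{\nu} \C[\Genh]_{(0, \nu)}$ over $\nu = \sum m_i\alpha_i$ with $m_i \geq 0$, each summand one-dimensional; choose a generator $\xi_\nu$. The ring homomorphism $\C[\A_G] \to \C[t]$ induced by $h$ sends $\xi_\nu \mapsto t^{\langle \nu, h\rangle}$, with nonnegative exponent because $h \in \Delta^{\vee}$. Since the Peter-Weyl tensor $V_{(\lambda, \mu)} \otimes \C_{(0, \nu)} \cong V_{(\lambda, \mu+\nu)}$ translates into multiplication $\xi_\nu \cdot (-): \C[\Genh]_{(\lambda, \mu)} \xrightarrow{\cong} \C[\Genh]_{(\lambda, \mu + \nu)}$ being a $(G \times G)$-isomorphism, iterating exhibits $\C[S_G]$ as free over $\C[\A_G]$ with basis $\bigoplus_\lambda \C[\Genh]_{(\lambda, \lambda)}$, canonically identified with $\C[G] = \bigoplus_\lambda \C[G]_\lambda$ as $(G \times G)$-representations.

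Next I would define the comparison map $\Psi: R_h(\C[G]) \to \C[t] \otimes_{\C[\A_G]} \C[S_G]$ by sending $f \in \C[G]_\lambda$, viewed in the $m$-th graded piece $\mathcal{F}^h_{\leq m}$ (necessarily $m \geq \langle \lambda, h\rangle$), to $\Psi(f) = t^{m - \langle \lambda, h\rangle} \otimes \widetilde f$, where $\widetilde f \in \C[\Genh]_{(\lambda, \lambda)}$ is the canonical lift. This is $\C[t]$-linear by construction and bijective: both sides are free $\C[t]$-modules of the same rank, with bases $\bigoplus_\lambda \C[G]_\lambda$ sitting in matching degree $\langle \lambda, h\rangle$ (the basic copy of $\C[G]_\lambda$ appears first in $\mathcal{F}^h_{\leq \langle \lambda, h\rangle}$ in $R_h$, while $\C[\Genh]_{(\lambda, \lambda)}$ has $\mathbb{G}_m$-grading $\langle \lambda, h\rangle$ since its left $T_G^{\abs}$-weight is $\lambda$).

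The main check is multiplicativity. For $x \in \C[G]_{\lambda_1}$ and $y \in \C[G]_{\lambda_2}$, the product $\widetilde x \cdot \widetilde y$ in $\C[S_G]$ decomposes, via Clebsch-Gordan for $V_{\lambda_1} \otimes V_{\lambda_2}$ together with $\C_{\lambda_1} \otimes \C_{\lambda_2} = \C_{\lambda_1 + \lambda_2}$ on the $T_G^{\abs}$-factor, as $\sum_{\lambda \leq \lambda_1 + \lambda_2} z_\lambda$ with each $z_\lambda \in \C[\Genh]_{(\lambda, \lambda_1 + \lambda_2)}$ corresponding canonically to the $\lambda$-isotypical component $(xy)_\lambda \in \C[G]_\lambda$ of the standard product $xy$. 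Using the relation $\xi_{\lambda_1 + \lambda_2 - \lambda} = t^{\langle \lambda_1 + \lambda_2 - \lambda, h\rangle}$ in the fiber product to rewrite $z_\lambda = t^{\langle \lambda_1 + \lambda_2 - \lambda, h\rangle} \widetilde{(xy)_\lambda}$, a direct calculation gives $\Psi(x) \Psi(y) = \sum_\lambda t^{m_1 + m_2 - \langle \lambda, h\rangle} \otimes \widetilde{(xy)_\lambda} = \Psi(xy)$, the element $xy$ on the right being viewed in $\mathcal{F}^h_{\leq m_1 + m_2}$. I expect the main obstacle to be the bookkeeping: matching the $\mathbb{G}_m$-grading on the fiber product (coming from $\A^1$ together with the induced $h$-action on $S_G$) to the grading on $R_h$, and translating the Peter-Weyl multiplication on $\C[S_G]$ --- which produces nontrivial shifts in the $T_G^{\abs}$-weight $\mu$ --- into the Rees-algebra product with its explicit $t$-factors. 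The conceptual key is that multiplication by $\xi_\nu$ in $\C[S_G]$ becomes $t^{\langle \nu, h\rangle}$ in the fiber product, precisely implementing the ``$t$-shift'' of the Rees construction.
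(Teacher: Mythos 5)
Your proof is correct and takes essentially the same approach as the paper's: identify the morphism $\C[\A_G]\to\C[t]$ via $\chi^{\alpha_i}\mapsto t^{\alpha_i(h)}$ and then compare $\C[t]\otimes_{\C[\A_G]}\C[S_G]$ with the Rees algebra of $\mathcal{F}^h$. The paper simply asserts the resulting isomorphism of graded rings and declares the square co-cartesian, whereas you spell out the comparison map $\Psi$ and verify multiplicativity directly; this is a legitimate filling-in of the details the paper leaves implicit, not a different route.
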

\begin{proof} Any cocharacter $h:\mathbb{G}_m\rightarrow T_G$ which lies in $\Delta^{\vee}$ induces a monoid morphism $\mathbb{A}^1\rightarrow \A_G$ (sending $0$ to $0$ if $h$ is regular), which we shall use for the base-change.  If we write the generators of the affine coordinate ring $\mathbb{C}[\A_G]$ corresponding to the simple positive roots $\alpha_i$ as $\chi^{\alpha_i}$ then the induced morphism of affine coordinate rings is given by 
$$\mathbb{C}[\chi^{\alpha_i}] \rightarrow \mathbb{C}[t]:\chi^{\alpha_i} \mapsto t^{\alpha_i(h)}.$$
It now suffices to remark that  $$\left(\bigoplus_{(\lambda,\mu)\in \mathfrak{X}^+_{\Genh}\cap Q_G} \mathbb{C}[\Genh]_{(\lambda,\mu)} \right) \bigotimes_{\mathbb{C}[\chi^{\alpha_i}]} \mathbb{C}[t] \ \cong \bigoplus_n \left( \bigoplus_{\lambda(h)\leq n} \mathbb{C}[G]_{\lambda} \right),$$ from which it follows that 
\begin{center}
\begin{tikzcd}
\bigoplus_n \left( \bigoplus_{\lambda(h)\leq n} \mathbb{C}[G]_{\lambda} \right) & \mathbb{C}[S_G]\ar{l}
 \\ \mathbb{C}[t] \ar{u} & \mathbb{C}[\chi^{\alpha_i}] \ar{u} \ar{l} .
\end{tikzcd}
\end{center}
is co-cartesian.
\end{proof}
\begin{remark}
In fact, though we will not use this explicitly, the same is still true if one also looks at \emph{partial} contractions, i.e. those contractions obtained by the same recipe as the horospherical contraction, but using a non-regular $h$.  These would correspond to base-changes of $S_G$ given by monoid morphisms $\A^1\rightarrow \A_G$ that no longer necessarily send $0\in\A^1$ to $0\in\A_G$, but rather to another idempotent in $\A_G$.
Also the rest of the discussion in this paper goes through in this case, utilising the parabolic symplectic implosions discussed by Kirwan in \cite{Ki}.
\end{remark}

\section{The symplectic contraction map}\label{mainsec}

In this section we define the contraction map $\Phi: T^*K \to (T^*K)^{\sc}$, which takes the place of the flat degeneration in Theorem \ref{ahcontract}.  This map is shown to be surjective, continuous, proper, and a symplectomorphism on a dense open subspace. 
We then use a universal property of $(T^*K)^{\sc}$ to construct a contraction map $\Phi_X: X \to X^{\sc}.$

\subsection{The universal contraction $(T^*K)^{\sc}$}

The contraction $(T^*K)^{\sc}$ is the following symplectic reduction:
\begin{equation}
(T^*K)^{\sc} = \big(E_{\mathcal{R}}(T^*K) \times E_{\mathcal{L}}(T^*K)\big)\q_{\! 0} \mathbb{T}.\\
\end{equation}

\noindent
The momentum map $\mu_{\mathbb{T}}: E_{\mathcal{R}}(T^*K) \times E_{\mathcal{L}}(T^*K) \to \mathfrak{t}^*$ of this action takes a pair $\big((k, w),(h, v)\big)$
to the difference $w - hv$.  It follows that $(T^*K)^{\sc}$ is identified with the pairs $\big((k, w),(h, v)\big)$ with $w = h v$, modulo the equivalence relation $$\big((k, w),(h, v)\big) \sim  \big((kt^{-1}, w),(th, v)\big).$$  The space $(T^*K)^{\sc}$ comes with a Hamiltonian $(K\times \mathbb{T} \times  K)$-action.  The $\mathbb{T}$-component of this action is computed as follows: 
\begin{equation}
t \big((k, w),(h, v)\big) = \big((kt^{-1}, w),(h, v)\big) = \big((k, w),(t^{-1}h, v)\big),  \ \ \ \ \ \ \mu_{\mathbb{T}}\big((k, w),(h, v)\big) = w = h v.\\
\end{equation}

\subsection{The map $\Phi$}\label{defcontr}

We now define the map $\Phi_{T^*K}: T^*K \to (T^*K)^{\sc}.$  For a point $(k, v) \in T^*K$, we consider the coadjoint orbit $\mathcal{O}_v\subset \mathfrak{k}^*.$  This orbit intersects $\Delta$ in precisely one point, $w \in \mathcal{O}_v \cap \Delta.$  The coadjoint orbit is isomorphic to $K_I \backslash K,$ where $K_I$ is the stabilizer of $w \in \Delta_I \subset \Delta,$ so we may pick an element $h \in K$ such that $h v = w,$ well-defined up to the left action of $K_I$. We (pre)define $\Phi_{T^*K}$ as follows:
\begin{equation}
\Phi_{T^*K}(k, v) = \big((kh^{-1}, h  v),(h, v)\big) \in E_{\mathcal{R}}(T^*K) \times E_{\mathcal{L}}(T^*K).\\
\end{equation}

\noindent
This map is not well-defined, because the choice of $h$ is unique only up to the $K_I$ action, whereas
the implosion class is modulo $[K_I, K_I].$  However, $(h v) - h  v = 0,$ so the image of $\Phi_{T^*K}$ lies in the momentum pre-image of the diagonal $\mathbb{T} \subset \mathbb{T}^2$ action described above.   This means that we may pass to the quotient $$\Phi_{T^*K}: T^*K \to \big(E_{\mathcal{R}}(T^*K) \times E_{\mathcal{L}}(T^*K)\big)\q_{\! 0} \mathbb{T} = (T^*K)^{\sc}.$$  The space $[K_I, K_I] \backslash K_I$ is covered by the inclusion $\mathbb{T} \subset K_I$. It follows that for any $h \in K_I$, we may find a $t \in \mathbb{T}$ with $t h, h^{-1} t^{-1} \in [K_I, K_I]$.  Therefore $\Phi_{T^*K}$ is well-defined as a map to $(T^*K)^{\sc}.$  By construction, $\Phi_{T^*K}$ intertwines the Hamiltonian $(K\times K)$-action on $T^*K$ with the Hamiltonian $(K\times K)$-action on $(T^*K)^{\sc},$ and preserves the left and right momentum maps.

\begin{proposition}\label{Gcontract}
The map $\Phi_{T^*K}$ is surjective, continuous, and proper. 
\end{proposition}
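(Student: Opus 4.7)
The plan has three parts, one per claim.

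\medskip\noindent\textbf{Surjectivity.} Given a representative $((a,w),(b,v))$ of a class in $(T^*K)^{\sc}$, so $w = bv \in \Delta$, I would exhibit $(ab, v) \in T^*K$ as a preimage. Since $bv \in \Delta$, the element $b$ is a valid choice of the auxiliary $h$ in the definition of $\Phi_{T^*K}$, and direct substitution gives $\Phi_{T^*K}(ab, v) = [(ab \cdot b^{-1}, bv),(b,v)] = [(a,w),(b,v)]$.

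\medskip\noindent\textbf{Continuity.} This is the principal obstacle: no globally continuous section $v \mapsto h(v)$ satisfying $h(v)v \in \Delta$ exists, since the dimension of the fiber (the coset $h K_v \subset K$) jumps across Weyl walls. My plan is a sequential argument exploiting compactness of $K$. Take $(k_n, v_n) \to (k, v)$ in $T^*K$ and choose auxiliary elements $h_n \in K$ with $h_n v_n = w_n := \mathcal{O}_{v_n} \cap \Delta$. Continuity of the orbit-space quotient $\mathfrak{k}^* \to \mathfrak{k}^*/K \cong \Delta$ gives $w_n \to w := \mathcal{O}_v \cap \Delta$, and compactness of $K$ ensures that every subsequence of $(h_n)$ admits a further subsequence $h_{n_j} \to h^{\ast} \in K$. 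Passing to the limit in $h_{n_j} v_{n_j} = w_{n_j}$ yields $h^{\ast} v = w$, so $h^{\ast}$ is itself a valid auxiliary element for the limit point $(k, v)$. Continuity of the implosion quotients $\mu_{\mathcal{R}}^{-1}(\Delta) \to E_{\mathcal{R}}(T^*K)$ and $\mu_{\mathcal{L}}^{-1}(-\Delta) \to E_{\mathcal{L}}(T^*K)$, together with continuity of the symplectic reduction quotient by $\mathbb{T}$, then gives
$$\Phi_{T^*K}(k_{n_j}, v_{n_j}) \to [(k(h^{\ast})^{-1}, w),(h^{\ast}, v)] = \Phi_{T^*K}(k, v),$$
the final equality being the well-definedness of $\Phi_{T^*K}$ (independence of the valid auxiliary choice) established in the paragraph preceding the proposition. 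Since $(T^*K)^{\sc}$ is Hausdorff, as a symplectic reduction of the Hausdorff space $E_{\mathcal{R}}(T^*K) \times E_{\mathcal{L}}(T^*K)$ by the compact torus $\mathbb{T}$, any sequence every subsequence of which admits a further subsequence converging to a common limit must itself converge to that limit. This yields $\Phi_{T^*K}(k_n, v_n) \to \Phi_{T^*K}(k, v)$.

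\medskip\noindent\textbf{Properness.} The map $\Phi_{T^*K}$ preserves the $\mathcal{R}$-momentum map by construction: under $T^*K \cong K \times \mathfrak{k}^*$, one has $\mu_{\mathcal{R}}(\Phi_{T^*K}(k,v)) = v = \mu_{\mathcal{R}}(k,v)$. Hence for any compact $C \subset (T^*K)^{\sc}$, the image $B := \mu_{\mathcal{R}}(C) \subset \mathfrak{k}^*$ is compact, and $\Phi_{T^*K}^{-1}(C) \subset K \times B$, a compact set. By continuity, $\Phi_{T^*K}^{-1}(C)$ is closed in $K \times B$, hence compact.
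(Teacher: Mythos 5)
Your proof is correct, and while the surjectivity step coincides with the paper's (exhibiting $(ab,v)$ as the preimage of a chosen representative), the other two parts take a genuinely different and arguably cleaner route. The paper proves properness \emph{before} continuity, tracing the preimage of a compact set $C_1$ through three successive quotient maps (by $\mathbb{T}$, by $[K_I,K_I]$-implosion on each factor) and then relating $\Phi^{-1}_{T^*K}(C_1)$ to the image of a compact set under the map $a((g,w),(h,v))=(gh,v)$; continuity is then deduced from the established properness via a rescaling-and-pasting argument over $T^*K_{\leq M}$ and $T^*K_{=1}$. You instead establish continuity first by a direct sequential argument (exploiting compactness of $K$ to extract a convergent auxiliary $h^\ast$, the well-definedness of $\Phi_{T^*K}$ across the ambiguity in $h$, and the subsequence criterion in a Hausdorff target), and then derive properness almost for free from the identity $\mu_{\mathcal R}\circ\Phi_{T^*K}=\mu_{\mathcal R}$ together with compactness of $K$. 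Both routes are valid, but yours localizes the hard work in the continuity step, where the subsequence argument neatly handles the discontinuity of any candidate section $v\mapsto h(v)$, and your properness proof is a one-line momentum-map observation rather than a chase through the quotient tower. The order of your proof (continuity, then properness) is also the more standard one, avoiding the somewhat delicate step in the paper where a continuity statement is deduced from a properness statement for a map not yet known to be continuous.
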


\begin{proof}

We choose a representative $\big((g, w)(h, v)\big) \in T^*K \times T^*K$ for a point in $\big(E_{\mathcal{R}}(T^*K)\times E_{\mathcal{L}}(T^*K)\big)\q_{\! 0}\mathbb{T}$.   We claim that $(gh, v) \in T^*K$ maps to this point under $\Phi_{T^*K}.$  We may choose any $h' \in K$ such that $h' v = w$, so we choose $h.$ The image $\Phi_{T^*K}(gh, v)$ is then equal to $\big((ghh^{-1}, h v),(h, v)\big) = \big((g, w),(h, v)\big),$ hence it follows that $\Phi$ is surjective. 

We let $C_1 \subset  (T^*K)^{\sc}$ be a compact subset, and we consider the inverse
image $C_2 \subset E_{\mathcal{R}}(T^*K) \times E_{\mathcal{L}}(T^*K)$.  The quotient map is proper, and $\mu^{-1}(0)$ is closed, so $C_2$ is likewise compact.  Now we take another inverse image $C_3 \subset \mu_R^{-1}(\Delta) \times \mu_L^{-1}(-\Delta) \subset T^*K \times T^*K$; as implosion is proper, $C_3 \subset T^*K \times T^*K$ is compact.  The set $C_3$ lies in the set $\mathcal{K} \subset  T^*K \times T^*K$ of $\big((g, w), (h, v)\big)$
with $hv = w \in \Delta$, and is precisely the set of pairs whose equivalence classes lie in $C_1$.  Now we consider the map $a: T^*K \times T^*K \to T^*K: \big((g, w),( h, v)\big) \mapsto (gh , v)$, as in Section \ref{hamiltonianspaces}.  By the surjectivity construction above, the compact image $a(C_3)$ covers $C_1$ under $\Phi_{T^*K}$, and we claim that it coincides with the inverse image of $C_1.$    We suppose that $(k, v) \in \Phi_{T^*K}^{-1}(C_1)$; from this it follows that there is some $h \in K$ with $h v = w$.  We let $g = kh^{-1}$, then by construction $\big((g,w),(h, v)\big) = \Phi_{T^*K}(k, v)$, and $a\big((g, w),( h, v)\big) = (k, v)$. The map $\Phi_{T^*K}$ is therefore proper.

Let $T^*K_{\leq M}$ and $(T^*K)^{\sc}_{\leq M}$ by the subspaces for which $|v| \leq M$, and define variants for $\geq, =$ accordingly. Pick some real number $M > 1$.   By properness $\Phi_{T^*K}$ restricts to a continuous map on $T^*K_{\leq M}$ and $T^*K_{= 1}$, and therefore also on $\R_{> 0}\times (T^*K)_{= 1} \cong T^*K_{> 0}$.  It follows by pasting that $\Phi_{T^*K}$ is continuous on all of $T^*K$. 
\end{proof}

\subsection{$\Phi_{T^*K}$ and sections of momentum maps}
Finally,  it will be useful to re-phrase the map $\Phi_{T^*K}:T^*K\rightarrow (T^*K)^{\sc}$ in terms of sections of the momentum map.  First observe that for $T^*K$  we use a preferred  section of the momentum map for the $\mathcal{R}$ action of $K$ on $T^*K$:

$$s_{T^*K}:\mathfrak{k}^*=\mu_{\mathcal{R}}(T^*K)\rightarrow T^*K: \lambda\mapsto (1,\lambda).$$   This particular choice is compatible with all other choices of sections that we will use throughout the paper, in particular with the section of the $\mathcal{R}$-action of $\Genh$ on $S_G$ that is obtained through the polar decomposition (see the discussion in \cite[Appendix B]{MT}).

We can now do the same for $(T^*K)^{\sc}=\big(E_{\mathcal{R}}(T^*K)\times E_{\mathcal{L}}(T^*K)\big) \q_{\! 0} \mathbb{T}$.  Indeed we simply put 
$$s_{(T^*K)^{\sc}}: \mathfrak{k}^*\mapsto (T^*K)^{\sc}: \lambda\mapsto \left((h^{-1},h\lambda),(h,\lambda)\right),$$
where $h$ is such that $h \lambda$ lies in $\Delta$.  With this in mind we simply have 
\begin{multline}\label{Phithroughsection}
\Phi_{T^*K}:T^*K\cong K\times\mathfrak{k}^* \rightarrow (T^*K)^{\sc}:(k,\lambda)=\mathcal{L}_k s_{T^*K}(\lambda) \\ \mapsto\Phi_{T^*K}\left(\mathcal{L}_ks_{T^*K}(\lambda)\right)=\mathcal{L}_k\Phi_{T^*K}(s_{T^*K}(\lambda))=\mathcal{L}_ks_{(T^*K)^{\sc}}(\lambda).
\end{multline}

\subsection{The general case}\label{gen}

For a Hamiltonian $K$-space $X$ with contraction $X^{\sc},$ we construct a surjective, continuous, proper contraction map $\Phi_X: X \to X^{\sc}.$  We show that $X_I$ is symplectomorphic to $X_I^{\sc}$ when $\Delta_I$ is the principal face of $X$, placing a Hamiltonian $(K\times \mathbb{T})$-structure on $X_I \subset X.$ In the introduction we defined the contraction $X^{\sc}$ of $(X, \omega, \mu)$ as the diagonal symplectic reduction of the product $EX \times E_{\mathcal{L}}(T^*K)$ by $\mathbb{T}$.  This makes $X^{\sc}$ into a Hamiltonian $(K \times \mathbb{T})$-space, and it shows that the image of the $\mathbb{T}$-momentum map coincides with the momentum image for the residual $\mathbb{T}$-action on $E X$, which by \cite{GJS} is the set $\mu(X) \cap \mathfrak{t}^*.$  We connect $X$ to $X^{\sc}$ with the following map:
\begin{equation}
\Phi_X: X \to X^{\sc}: x \mapsto \big((hx),(h, \mu(x))\big).
\end{equation}

\noindent
Here $h \in K$ is chosen to ``diagonalize" $\mu(x),$ that is $h \mu(x) \in \Delta.$  If $h\mu(x)$ actually lies in the face $\Delta_I$, then any two elements $h,h'$ which do this job differ
by an element $g \in K_{I}$, $h = gh'$.  For any such $g$ we may find $t \in \mathbb{T}$ and $p \in [K_I, K_I]$
such that $g = tp$.  Combining this with the equivalence relations which define $X^{\sc}$, we have $\big(hx,(h, \mu(x))\big) = \big(gh'x,(gh', \mu(x))\big) = \big(tph'x,(tph', \mu(x))\big) = \big(ph'x,(ph', \mu(x))\big) = \big(h'x,(h', \mu(x))\big)$, so $\Phi_X$ is well-defined.

\begin{proposition}\label{extension}
For any connected Hamiltonian $K$-space $(X, \omega, \mu)$, the map $\Phi_X: X \to X^{\sc}$ is surjective, continuous, proper, and $K$-equivariant. 
\end{proposition}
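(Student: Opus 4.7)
The plan is to realize $\Phi_X$ as the descent of the product map $\operatorname{id}_X \times \Phi_{T^*K}$ under a diagonal symplectic reduction, thereby reducing the four claims to the universal case established in Proposition \ref{Gcontract}.

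First, I would set up two identifications. From the isomorphism $X \cong (X \times T^*K)\q_{\! 0} K$ recalled in Section \ref{hamiltonianspaces} (the inverse of $b$ composed with the quotient), $K$ acts diagonally through its $\mathcal{L}$-action on $T^*K$, and the residual $K$-action on the reduction (coming from the $\mathcal{R}$-action on $T^*K$) recovers the original $K$-action on $X$. Then, by reduction in stages applied to
\begin{equation*}
(T^*K)^{\sc} = \big(E_{\mathcal{R}}(T^*K) \times E_{\mathcal{L}}(T^*K)\big)\q_{\! 0} \mathbb{T}, \qquad EX = \big(X \times E_{\mathcal{R}}(T^*K)\big)\q_{\! 0} K,
\end{equation*}
together with the definition $X^{\sc} = (EX \times E_{\mathcal{L}}(T^*K))\q_{\! 0}\mathbb{T}$, one obtains a natural isomorphism $X^{\sc} \cong \big(X \times (T^*K)^{\sc}\big)\q_{\! 0} K$, again with $K$ acting diagonally via $\mathcal{L}$, and the residual $K$-action provided by $\mathcal{R}$.

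Next, I would consider the $(K \times K)$-equivariant map $F := \operatorname{id}_X \times \Phi_{T^*K}\colon X \times T^*K \to X \times (T^*K)^{\sc}$. By Proposition \ref{Gcontract} this map is continuous, surjective, and proper, and it intertwines the diagonal $\mathcal{L}$-momentum maps for $K$. It therefore descends to a continuous, $K$-equivariant map $\overline{F}\colon X \to X^{\sc}$. To confirm $\overline{F} = \Phi_X$, I would trace a point through $b(x) = (x, (1,\mu(x)))$: applying $F$ and using the recipe from \eqref{Phithroughsection} gives $(x, ((h^{-1}, h\mu(x)), (h, \mu(x))))$ for any $h$ with $h\mu(x) \in \Delta$, and acting by $h \in K$ diagonally (which is allowed in the zero level class) produces the representative $(hx, ((1, h\mu(x)),(h, \mu(x))))$, whose image under the identification of $(X \times (T^*K)^{\sc})\q_{\! 0}K$ with $X^{\sc}$ is precisely the class $\big((hx),(h,\mu(x))\big) = \Phi_X(x)$.

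The four required properties then follow by descent. $K$-equivariance is inherited from the residual $\mathcal{R}$-equivariance of $F$. Surjectivity of $\overline{F}$ follows from surjectivity of $F$ plus surjectivity of the symplectic quotient projection on the zero level. Continuity is automatic for descents of continuous equivariant maps onto Hausdorff quotients. For properness, given a compact $C \subset X^{\sc}$, its preimage in $\mu^{-1}(0) \subset X \times (T^*K)^{\sc}$ under the quotient is compact (since the zero level is closed and $K$ is compact), its preimage under $F$ is compact by properness of $F$, and the image of this compact set under the quotient $X \times T^*K \to X$ is compact and covers $\overline{F}^{-1}(C)$.

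The main technical obstacle is verifying that these reduction-in-stages and descent manipulations, which are routine for smooth Hamiltonian manifolds, remain valid in the stratified Hamiltonian $K$-space setting of Section \ref{hamiltonianspaces}, where $(T^*K)^{\sc}$ is itself singular. This is handled by the framework of Sjamaar--Lerman \cite{SjL} for singular symplectic quotients and the product reduction-in-stages statement recalled in Section \ref{hamiltonianspaces}; the remaining subtleties are that the relevant zero level sets are closed and the $K$-quotient maps proper, so that the properness chase genuinely descends.
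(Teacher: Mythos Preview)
Your proposal is correct and follows essentially the same approach as the paper: both realize $X^{\sc}$ as $K_{\ 0}{\!\ql}\big(X\times (T^*K)^{\sc}\big)$ via reduction in stages and the universal implosion identity, then descend $\operatorname{id}_X\times\Phi_{T^*K}$ through the quotient, inheriting surjectivity, continuity, properness, and $K$-equivariance from Proposition~\ref{Gcontract}. Your write-up is in fact more detailed than the paper's---you explicitly verify that the descended map agrees with the formula defining $\Phi_X$ and you spell out the properness chase, whereas the paper simply asserts that ``the stated properties of $\Phi_X$ now follow from the relevant properties of $\Phi_{T^*K}$.''
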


\begin{proof}
As defined, $X^{\sc}$ is the symplectic reduction $\big(EX \times E_{\mathcal{R}}(T^*K)\big)\q_{\! 0}\mathbb{T}$, but it can also be constructed as follows:
\begin{equation}
X^{\sc} = \big(EX \times E_{\mathcal{R}}(T^*K)\big)\q_{\! 0}\mathbb{T} = \Big(K _{\ 0}{\!\ql}{} \big(X \times E_{\mathcal{L}}(T^*K)\big) \times E_{\mathcal{R}}(T^*K)\Big)\q_{\! 0} \mathbb{T} = K _{\ 0}{\!\ql}{} \big(X \times (T^*K)^{\sc}\big).
\end{equation}
\noindent
This follows from the universal property of the imploded cotangent bundle, $EX = K _{\ 0}{\!\ql}{} \big(X \times E_{\mathcal{L}}(T^*K)\big).$ Now we can use
the $(K\times K)$-equivariance of $\Phi_{T^*K}: T^*K \to (T^*K)^{\sc}$ to construct $\Phi_X$:
\begin{equation}\label{PhiX}
\Phi_X: X \cong K _{\ 0}{\!\ql}{} \big(X \times T^*K\big)  \to K _{\ 0}{\!\ql}{} \big(X \times (T^*K)^{\sc}\big) \cong X^{\sc}.\\
\end{equation}
\noindent
The stated properties of $\Phi_X$ now follow from the relevant properties of $\Phi_{T^*K}.$
\end{proof}

Next we show that $\Phi_X$ defines a symplectomorphism on a $(K\times \mathbb{T})$-stable subspace of $X.$ 

\begin{proposition}\label{Xsymp}
For a Hamiltonian $K$-space $(X, \omega, \mu)$, let $\Delta_I \subset \Delta$ be the principal face of $\mu(X).$  The map $\Phi_X$ restricts to a symplectomorphism $\Phi_X: X_{ I}^o \to (X^o_I)^{\sc}.$ 
\end{proposition}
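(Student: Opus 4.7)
My approach is to factor $\Phi_X$ through $\Phi_{T^*K}$ using the identification $X \cong K\ql_{\!0}(X \times T^*K)$ from Section~\ref{hamiltonianspaces}. Under this identification and the corresponding $X^{\sc} \cong K\ql_{\!0}(X \times (T^*K)^{\sc})$ used in the proof of Proposition~\ref{extension}, the map $\Phi_X$ becomes the symplectic $K$-reduction of $\mathrm{id}_X \times \Phi_{T^*K}$. Since Sjamaar--Lerman reduction is functorial on the appropriate category of Hamiltonian spaces, it suffices to establish: (i) that $\Phi_{T^*K}$ is a symplectomorphism on each face-stratum $(T^*K)^o_J = K\cdot \mu_\mathcal{R}^{-1}(\Delta_J^o) \subset T^*K$, and (ii) that $X_I^o$ (respectively $(X_I^o)^{\sc}$) is identified with the $K$-reduction of the relevant subset of $X \times T^*K$ (respectively $X \times (T^*K)^{\sc}$).

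For (i), on each stratum $(T^*K)_J^o$ one has an explicit two-sided inverse to $\Phi_{T^*K}$, namely $\Psi\bigl([((k_1,v_1),(k_2,v_2))]\bigr) := (k_1 k_2, v_2)$; this is precisely the map $a$ from the surjectivity argument of Proposition~\ref{Gcontract}. Well-definedness modulo the $\mathbb{T}$-reduction (and the $[K_J,K_J]$-quotients of the two implosions) follows from the decomposition $K_J = [K_J,K_J]\cdot\mathbb{T}$, which ensures that the combined ambiguities on the target side exactly match the $K_J$-ambiguity built into the definition of $\Phi_{T^*K}$. A direct dimension count gives $2\dim K$ on both sides, so $\Phi_{T^*K}$ is a stratumwise diffeomorphism. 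For the symplectic property, $\Phi_{T^*K}$ is manifestly $(K\times\mathbb{T}\times K)$-equivariant and intertwines the two $K$-momentum maps as well as the residual $\mathbb{T}$-momentum map (cf.\ Section~\ref{defcontr} and equation~(\ref{Phithroughsection})); this, combined with the fact that the momentum-map level sets on $(T^*K)_J^o$ are single $(K\times K)$-orbits, forces $\Phi_{T^*K}$ to preserve the symplectic form via the standard uniqueness of Hamiltonian structures (Marle--Guillemin--Sternberg normal form) or equivalently a direct local computation on the cross-section slice $\mu_\mathcal{R}^{-1}(\Delta_J^o)$.

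Step (ii) is a direct unwinding of definitions. Under $X \cong K\ql_{\!0}(X\times T^*K)$, a point $x \in X_I^o$ corresponds to the class of $(x,(e,\mu(x)))$ with $\mu(x)\in K\Delta_I^o$, so that $X_I^o$ is realised as the $K$-reduction of $X^o \times (T^*K)_I^o$. Dually, the Cross Section Theorem together with the universal property $EX = K\ql_{\!0}(X\times E_\mathcal{R}(T^*K))$ identifies $(X_I^o)^{\sc}$ with the $K$-reduction of $X_I^o \times (T^*K)^{\sc}|_{\Delta_I^o}$, and the diagonal $\mathbb{T}$-reduction in the contraction matches the residual one produced from $\Phi_{T^*K}$. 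Combining (i) and (ii) yields the proposition. The main technical obstacle I anticipate is precisely the symplectic verification in (i): equivariance and momentum-map preservation pin $\Phi_{T^*K}$ down \emph{up to} a residual rigidity argument on a single slice, which must be dispatched either by the normal form theorem or a short hands-on coordinate calculation. The bookkeeping involving $[K_J,K_J]$-quotients and $\mathbb{T}$-reductions across the various implosions, while delicate for non-regular faces, is controlled once one has fixed the decomposition $K_J = [K_J,K_J]\cdot\mathbb{T}$ and does not pose a serious difficulty.
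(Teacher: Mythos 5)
There is a genuine gap in your step (i). The claim that $\Phi_{T^*K}$ restricts to a symplectomorphism on every face stratum $(T^*K)_J^o = K\mu_{\mathcal{R}}^{-1}(\Delta_J^o)$ is false whenever $\Delta_J$ is not the open chamber. Your dimension count is off: by the Cross Section Theorem $(T^*K)_J^o \cong K\times_{K_J}\mu_{\mathcal{R}}^{-1}(\Delta_J^o)$, so $\dim (T^*K)_J^o = 2\dim K - \dim K_J + \dim \Delta_J$, which is strictly less than $2\dim K$ as soon as $K_J$ is non-abelian. More concretely, the fibers of $\Phi_{T^*K}$ on $(T^*K)_J^o$ are $\{(kg, v) : g \in h^{-1}[K_J,K_J]h\}$ (with $hv\in\Delta_J$), so $\Phi_{T^*K}$ is a submersion with fibers of dimension $\dim [K_J,K_J]$, hence not a diffeomorphism, nor even a form-preserving map (its pullback of the reduced form is degenerate along the fiber directions, whereas the source carries the nondegenerate form $\omega_0$). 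Relatedly, your proposed inverse $\Psi\bigl([((k_1,v_1),(k_2,v_2))]\bigr) = (k_1 k_2, v_2)$ is not well defined: the $[K_J,K_J]$-quotient built into $E_{\mathcal{R}}(T^*K)$ lets $k_1$ move by $[K_J,K_J]$ on the right, but then $(k_1 p) k_2 \neq k_1 k_2$ in general. Since Proposition~\ref{Xsymp} must cover spaces $X$ whose principal face $\Delta_I$ is \emph{not} open, your reduction to statement (i) does not go through.

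The missing ingredient is the Cross Section Theorem's conclusion that $[K_I,K_I]$ acts \emph{trivially} on $X_I^o$: this is what absorbs the $[K_I,K_I]$-non-injectivity of $\Phi_{T^*K}$ after the $K$-reduction. The paper exploits this directly. It works with the single space $Y = \mu^{-1}(\Delta_I)\times K$ and two submersions $\pi_1(y,k)=ky$ onto $X_I^o$ and $\pi_2(y,k)=\bigl(y,(k^{-1},k\mu(y))\bigr)$ onto $(X_I^o)^{\sc}$, showing that each is constant on the fibers of the other (this is precisely where the $[K_I,K_I]$-triviality enters), so that $\Phi_X$ and $a$ are inverse diffeomorphisms. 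The symplectic identification is then read off by viewing $Y$ as a subset $Y_0$ of $\mu^{-1}(0)\subset X\times T^*K$ and pulling back the reduced forms: $a^*\omega = (\omega+\omega_0)|_{Y_0}=\pi_2^*\sigma$. Your step (ii) and the overall idea of factoring through $X\cong K\ql_{\!0}(X\times T^*K)$ are sound and close to the paper's Proposition~\ref{extension}, but to salvage the argument you would need to replace (i) by a statement about the \emph{reduced} map being a symplectomorphism, and explicitly invoke the $[K_I,K_I]$-triviality to control the fibers, rather than claim stratumwise injectivity of $\Phi_{T^*K}$ itself.
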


\begin{proof}
First we prove that $\Phi_X$ restricts to a diffeomorphism on $X_{I}^o.$ We consider the space $Y = \mu^{-1}(\Delta_I) \times K \subset X_I^o \times K$, and the following commutative triangle:  

\begin{figure}[H]

$$
\begin{xy}
(0, 32.5)*{Y} = "A";
(3.5, 30)*{} = "A1";
(-3.5, 30)*{} = "A2";
(20, 0)*{\big(X_{I}^o\big)^c.} = "B";
(20,5)*{} = "B1";
(15, 1.5)*{} = "B2";
(15, -1.5)*{} = "B3";
(-20, 0)*{X_{I}^o} = "C";
(-20,5)*{} = "C1";
(-15, 1.5)*{} = "C2";
(-15, -1.5)*{} = "C3"; 
"B1"; "A1";**\dir{-}? >* \dir{>};
"C1"; "A2";**\dir{-}? >* \dir{>};
"B2"; "C2";**\dir{-}? >* \dir{>};
"C3"; "B3";**\dir{-}? >* \dir{>};
(0, 4.1)*{\Phi_X|_{X_I^o}};
(0, -3.5)*{a};
(15 , 17)*{\pi_2};
(-15 , 17)*{\pi_1};
\end{xy}
$$\\
\label{Label1}
\end{figure}

\noindent
Here, $\pi_1(y, k) = k y,$ and  $\pi_2(y, k) = \big(y, (k^{-1}, k \mu(y))\big)$, and $a$ is as in Section \ref{hamiltonianspaces}.  Both $\pi_1$ and $\pi_2$ are submersions
onto $X^o_I$ and $(X^o_I)^{\sc}$ respectively, and both maps are constant on the other's fibers.  The commutativity of the diagram
then implies that $\phi$ and $a$ are differentiable inverses to each other.

Now we identify $Y$ with the subspace $Y_0 \subset X \times T^*K$ of those pairs of the form $\big(y, (g^{-1}, g  \mu(y))\big)$.
As constructed, $Y_0 \subset \mu^{-1}(0)$ with respect to the $K$ action on $X \times T^*K.$  Recall that with the standard
 symplectic form $\omega_0$ on $T^*K$ we have $X \cong K _{\ 0}{\!\ql}{} \big(X \times T^*K\big)$, so it follows that
$a^*(\omega) = (\omega + \omega_0) |_{Y_0}.$  Now, the symplectic form $\sigma$ on $(X^o_I)^{\sc}$ satisfies $\pi_2^*(\sigma) =  (\omega + \omega_0) |_{Y_0} = a^*(\omega|_{X^o_I}).$
\end{proof}

When $X = T^*K$, the open subset of Proposition \ref{Xsymp} is $(T^*K)^o \subset T^*K$ consisting of those pairs $(k, v)$
with $v \in K\Delta^o.$  The continuous map $\mu_{\mathbb{T}} \circ \Phi_X: X \to \mathfrak{t}^*$ restricts to give Hamiltonians for the induced $\mathbb{T}$-action on $X^o_I \subset X,$ and it follows from Proposition \ref{extension} that the image of this map is $\mu(X) \cap \mathfrak{t}^*.$ 
The residual $\mathbb{T}$-action on $X^{\sc} $ is computed as follows:
\begin{equation}
t \big(x,(h, v)\big)= \big(tx,(h, v)\big) = \big(x,(t^{-1}h, v)\big).
\end{equation}
\noindent
Given $x \in X_{I}^o,$ with $\Phi_X(x) = \big(hx,(h, \mu(x))\big)$, it follows that the $\mathbb{T}$-action is computed as below:

\begin{equation}
t \star x = a\bigg(t \big( hx,(h, \mu(x))\big)\bigg) = a\big(thx,(h, \mu(x))\big) = h^{-1}th x.
\end{equation}

\section{Gradient flows on the Vinberg monoid}\label{gradientflowmonoid}
\subsection{The work of Harada and Kaveh}
In \cite{HK}, Harada and Kaveh study a flow on the total space of a toric degeneration of a smooth projective variety. 
In particular, given a flat proper algebraic morphism $\pi:\mathcal{X}\rightarrow \mathbb{C}$ (where we denote $\pi^{-1}(0)$ as $X_0$ and  $\pi^{-1}(1)$ as $X$), they consider the \emph{gradient-Hamiltonian vector field} \begin{equation}\label{grad-hamil}V_{\pi}=-\frac{\nabla(\Re(\pi))}{|\!| \nabla(\Re(\pi))|\! |^2},\end{equation} inspired by an earlier work of Ruan \cite{R} (see also \cite{NNU}).  We will always assume that $\mathcal{X}$ is a family of semi-projective varieties as before, and then use the K\"ahler metric we have chosen to define $V_{\pi}$.   They use $V_{\pi}$ to obtain a surjective continuous (in the analytic topology) map $\phi:X\rightarrow X_0$ which extends the flow of $V_\pi$ at time $t=1$ where this is defined, and which is a symplectomorphism restricted to a dense open subset of $X$, such that the integrable system on this subset (thought of as a subset of $X_0$ now) extends continuously to all of $X$.  To be precise, in \cite{HK} Harada and Kaveh establish an existence result for $\phi$, where the continuity depends on the (real-)analicity of the gradient-Hamiltonian vector field $V_{\pi}$.  They use the compactness of $X$ (or more precisely the properness of $\pi$) in their proof.

We are now concerned with horospherical degenerations of a $G$-variety, for $G$ a linearly reductive group.   As described above, such horospherical degenerations can all be understood through the Vinberg monoid $S_G$, and its base-change giving the horospherical degenerations of $G$ itself, which in turn can be used to study arbitrary horospherical degenerations of semi-projective varieties.  This differs from the set-up of \cite{HK} in two ways:
\begin{itemize}
\item the variety we start from is not necessarily projective nor smooth;
\item though the central fiber of our degeneration picks up extra torus symmetry, the dimension of the torus is in general too low to form a toric variety.
\end{itemize}
Nevertheless, we shall follow the same path (to be precise, none of the relevant results on the gradient-Hamiltonian vector field in \cite{HK}, in particular the existence of $\phi$, need the central fiber to be toric).  

Moreover, we shall show that for horospherical degenerations the map $\phi$ not only exists, but can explicitly be realized as the map $\Phi_X$ from before.  The crucial ingredient is the use of the Vinberg monoid, which in particular is both a universal horospherical degeneration of $G$, and a monoid (in fact a universal monoid, see \cite{Vi1}).  It is the last property which allows us to bring decomposition theorems of a differential geometric nature into play, which in turn make the flow for the vector field (\ref{grad-hamil}) transparent and tractible.  

We will begin by studying the flow for $G$ itself, first for the basic example of $G=\SL(2,\mathbb{C})$, and then for the general case where $G$ is linearly reductive (note that the situation simplifies for semi-simple $G$, as explained in Section \ref{ssonly} below).
\subsection{Basic example}
For $G=\SL(2,\mathbb{C})$, the Vinberg monoid is simply given by $S_G=M_{2\times 2}(\mathbb{C})$, with the abelization map $\pi$ given by the determinant $\det: M_{2\times 2}(\mathbb{C})\rightarrow \mathbb{C}$.  Recall that the K\"ahler structure we are using on $M_{2\times 2}(\mathbb{C})$ is given by the Hermitian structure $\langle A, B\rangle=\Tr(AB^*)$.  The group of units $\Genh$ is just $\GL(2,\mathbb{C})$.  We want to consider the gradient-Hamiltonian vector field $V_{\pi}$ given by (\ref{grad-hamil}).  We will see that this simplifies a lot using the decompositions we have mentioned earlier.  In fact, as we want to re-use the results of the next two lemmas later on, we shall at once state them for arbitrary matrix spaces $M_{n\times n}(\mathbb{C})$, with for the rest of this section $K$ referring to $\SU(n)$.  We shall continue to refer to the determinant map $M_{n\times n}(\mathbb{C})\rightarrow \mathbb{C}$ as $\pi$.
\begin{lemma}\label{eerste}
The gradient-Hamiltonian vector field $V_{\pi}$ is invariant under the $\mathcal{L}$ and $\mathcal{R}$-actions of $K$.  
\end{lemma}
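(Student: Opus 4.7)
The plan is to observe that $V_{\pi}$ is built entirely out of $K$-invariant data, so its equivariance is essentially a formal consequence. I would carry this out in three short steps.

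First, I would check that $\Re(\pi)$ itself is invariant under both the $\mathcal{L}$- and $\mathcal{R}$-actions of $K=\SU(n)$. Since $\SU(n)\subset \SL(n,\mathbb{C})$, we have $\det(k)=1$ for any $k\in K$, and therefore
\[
\pi(kA) = \det(k)\det(A) = \det(A) = \pi(A), \qquad \pi(Ak^{-1}) = \det(A)\det(k)^{-1} = \pi(A),
\]
so both $\pi$ and its real part $\Re(\pi)$ are $K$-bi-invariant functions on $M_{n\times n}(\mathbb{C})$.

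Second, I would observe that the K\"ahler metric $\langle A,B\rangle = \Re\,\Tr(AB^*)$ underlying the definition of the gradient is invariant under the $\mathcal{L}$- and $\mathcal{R}$-actions of $K$: this follows immediately from $k^*=k^{-1}$ for $k\in K$ together with the cyclicity of the trace, since $\Tr((kA)(kB)^*)=\Tr(kAB^*k^{-1})=\Tr(AB^*)$ and similarly for right multiplication. Consequently, the gradient of any $K$-invariant function with respect to this metric is a $K$-equivariant vector field: explicitly, if $L_k$ denotes the left-multiplication diffeomorphism and $f\circ L_k = f$, then applying $L_k^*$ to the defining identity $df(X)=\langle \nabla f, X\rangle$ and using invariance of the metric forces $\nabla f$ to satisfy $(L_k)_*\nabla f = \nabla f$, and the analogous argument works for $\mathcal{R}$.

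Third, I would note that the norm squared $\|\nabla\Re(\pi)\|^2$ is then a $K$-bi-invariant scalar function (being the metric paired with itself), so dividing by it does not affect equivariance; moreover, the sign and the wedge with $-1$ are harmless. Piecing these together yields
\[
(L_k)_* V_{\pi} = V_{\pi} = (R_k)_* V_{\pi},
\]
which is the invariance claimed. The only potential subtlety is the locus where $\nabla\Re(\pi)$ vanishes and $V_{\pi}$ is undefined, but this locus is itself $K$-bi-invariant (being the critical set of the invariant function $\Re(\pi)$), so the statement makes sense and holds on its natural domain. I do not anticipate any serious obstacle here; the lemma is really a packaging of the fact that both inputs to the construction of $V_{\pi}$ are manifestly $K$-invariant.
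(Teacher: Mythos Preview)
Your proof is correct and takes essentially the same approach as the paper: both deduce invariance of $V_\pi$ from the $K$-bi-invariance of $\pi$ together with the $K$-invariance of the K\"ahler data. The only cosmetic difference is that the paper phrases the argument via the symplectic form, observing that $\nabla(\Re\pi)$ is the Hamiltonian vector field of $\Im(\pi)$ and then invoking invariance of $\omega$, whereas you work directly with the Riemannian metric; on a K\"ahler manifold these are two equivalent ways of saying the same thing.
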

\begin{proof}Perhaps the easiest way to see this is to use the fact that $\nabla(\Re\pi)$ is also the Hamiltonian vector field associated with $\Im(\pi)$, see \cite[\S 2.2]{HK}.  Since $\pi$  and the symplectic form are  invariant under both actions of $K$, so is $\nabla(\Re\pi)$ and hence $V_{\pi}$.
\end{proof}
In particular this implies that we can just limit ourselves to understanding the flow of $V_{\pi}$ on the $A$-part of $\GL(n,\mathbb{C})$ -- of course for $G=\SL(2,\mathbb{C})$ we have that $\GL(2,\mathbb{C})=\Genh$.

\begin{lemma}\label{onlytorus}
If $x\in \SL(n,\mathbb{C})\subset \GL(n,\mathbb{C})$ has a $KAK$-decomposition $x=k_1ak_2$ (with $k_1,k_2\in K$), then the flow-line for $V_\pi$ through $x$ is contained in $k_1A_{\GL(n,\mathbb{C})} k_2$.
\end{lemma}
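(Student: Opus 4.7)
The plan is to reduce the claim to a pointwise tangency statement at points of $A=A_{\GL(n,\mathbb{C})}$, and then establish that tangency using the symmetries of the situation. By Lemma \ref{eerste}, the flow $\phi_t$ of $V_\pi$ satisfies $\phi_t(k_1yk_2)=k_1\phi_t(y)k_2$ for any $k_1,k_2\in K$ and any $y$ in the domain of the flow. Writing $x=k_1ak_2$ as in the lemma (so $a\in A_{\SL(n,\mathbb{C})}\subset A$), it therefore suffices to prove that the flow-line of $V_\pi$ through $a$ stays in $A$. Since $A$ is a locally closed smooth submanifold of $M_{n\times n}(\mathbb{C})$, this in turn reduces to showing $V_\pi|_{a'}\in T_{a'}A$ for every $a'\in A$. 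Because $V_\pi$ is a scalar multiple of $\nabla(\Re\pi)$ computed with respect to the Riemannian metric $g(A,B)=\Re\Tr(AB^*)$, it amounts to showing that $\nabla(\Re\pi)|_{a'}$ is a real diagonal matrix.

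For the diagonality, I would apply Lemma \ref{eerste} once more: $V_\pi$, hence $\nabla(\Re\pi)$, is invariant under the compact torus $\mathbb{T}\times\mathbb{T}\subset K\times K$. This torus preserves setwise the linear subspace $D\subset M_{n\times n}(\mathbb{C})$ of all complex diagonal matrices, and at a point $a'\in A\subset D$ its linearised action on $T_{a'}M_{n\times n}(\mathbb{C})$ is the diagonal conjugation of matrix entries, whose fixed subspace is exactly $D$. Hence $\nabla(\Re\pi)|_{a'}\in D$.

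For the reality, I would introduce the entrywise complex conjugation $\tau(x)=\bar x$. A direct check shows that $\tau$ is a (real) isometry of the metric $g$, and since $\det$ has real coefficients we have $\tau^{*}\Re\det=\Re\det$. Consequently $\tau$ preserves the gradient vector field, i.e.\ $\tau_{*}(\nabla(\Re\pi)|_{a'})=\nabla(\Re\pi)|_{\tau(a')}$. For any $a'\in A$ we have $\tau(a')=a'$, and the differential of $\tau$ at such a fixed point is again complex conjugation on $T_{a'}M_{n\times n}(\mathbb{C})$. Therefore $\nabla(\Re\pi)|_{a'}$ is conjugation-invariant, i.e.\ real. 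Combined with diagonality this places $\nabla(\Re\pi)|_{a'}$ in $T_{a'}A$, and the lemma follows.

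The main obstacle is that $K\times K$-invariance alone is not enough: it pins $\nabla(\Re\pi)|_{a'}$ only to the $n$-complex-dimensional subspace $D$, while $T_{a'}A$ is the $n$-real-dimensional subspace of real diagonals. Extracting the extra reality requires a discrete symmetry outside of $K\times K$, and the natural candidate $\tau$ is anti-holomorphic and in particular anti-symplectic. The key point that makes the argument go through is that $\tau$ is nevertheless an isometry of the Riemannian metric $g$ and fixes $\Re\pi$, which is all that the gradient flow sees. A purely computational alternative is available: expanding $d\det|_{a'}$ and solving $g(\nabla(\Re\det)|_{a'},\cdot)=d\Re\det|_{a'}$ yields $\nabla(\Re\det)|_{a'}=\mathrm{diag}\bigl(\prod_{j\neq i}a'_j\bigr)_i$, which is manifestly a real diagonal matrix.
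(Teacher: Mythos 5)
Your argument is correct, and in fact you also reproduce the paper's own proof as your ``purely computational alternative'': the paper reduces to $k_1=k_2=1$ via Lemma~\ref{eerste} and then simply computes $\nabla(\Re\det)$ at a diagonal matrix $D$ with positive real entries, obtaining the real diagonal $\nabla(\Re\det)|_D=\mathrm{diag}\bigl(\textstyle\prod_{j\ne i}x_j\bigr)_i$ and hence the explicit formula for $V_\pi(D)$, which is then used in the $\SL(2,\mathbb{C})$ discussion that follows. Your primary, symmetry-based route is a genuinely different argument, and it is sound modulo one phrasing that should be tightened: invariance of $V_\pi$ under $\mathbb{T}\times\mathbb{T}$ does not by itself pin down $V_\pi|_{a'}$, because $a'$ is not a fixed point of $\mathbb{T}\times\mathbb{T}$; what puts $\nabla(\Re\pi)|_{a'}$ into $D$ is that the stabilizer of $a'$ in $\mathbb{T}\times\mathbb{T}$ is the diagonal copy $\{(t,t)\}$, whose linearized action on $T_{a'}M_{n\times n}(\mathbb{C})$ is conjugation by $\mathbb{T}$ with fixed subspace exactly the diagonals. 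The real novelty in your proposal is the reality step: you correctly observe that the entrywise conjugation $\tau$, although anti-holomorphic and anti-symplectic, is an isometry of the Riemannian metric $g(A,B)=\Re\Tr(AB^*)$ fixing $\Re\det$ and fixing every $a'\in A$, so $\tau_*$ preserves the gradient vector field and forces $\nabla(\Re\det)|_{a'}$ to be real. This soft argument avoids computing the gradient at all and would transfer more readily to settings where the explicit gradient is unwieldy, whereas the paper's direct computation is shorter here and, importantly, produces the formula on which the subsequent $\SL(2,\mathbb{C})$ analysis depends.
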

\begin{proof}By Lemma \ref{eerste}, it suffices to show this for the case $k_1=k_2=1$, where it follows trivially.  Indeed, $A_{\GL(n,\mathbb{C})}$ can be taken to be the invertible diagonal matrices with positive entries on the diagonal, of the form $D=\left(\begin{array}{ccc} x_1 & & 0 \\ & \ddots & \\ 0 & & x_n\end{array}\right)$, with all $x_i>0$.  Given such a diagonal matrix $D$, we find straightforwardly $V_\pi(D)=\frac{1}{\sum_i \prod_{i\neq j} x_j^2} \left(\begin{array}{ccc} -x_2\cdots x_n & &  0 \\ & \ddots &\\ 0 & & -x_1\cdots x_{n-1} \end{array}\right).$
\end{proof}

Re-focusing on $G=\SL(2,\mathbb{C})$, the level sets of $\pi$ restricted to $\Aenh=\left\{\left(\begin{array}{cc} x & 0 \\ 0 & y \end{array}\right)\Bigg| x,y>0 \right\}$ consist of single leaves of the hyperbola: $xy=\lambda$, and the vector field $V_{\pi}$ is perpendicular to these leaves; in particular its flow lines will form leaves of hyperbolas $x^2-y^2=c^2$.  It is now elementary to see what they flow to (see Figure \ref{mainfigure}), as we know that the flow $V_{\pi}$ goes from $\pi^{-1}(1)$ to $\pi^{-1}(0)$ in time $1$:

$$\left(\begin{array}{cc} x & 0 \\ 0 & x^{-1} \end{array}\right)\mapsto \begin{cases} 

\left(\begin{array}{cc}\sqrt{x^2-x^{-2}} & 0 \\ 0 & 0 \end{array}\right)
& \text{if }x\geq1; \\ \ & \ \\ 
\left(\begin{array}{cc} 0 & 0 \\ 0 & \sqrt{x^{-2}-x^2} \end{array} \right) & \text{if }x\leq 1.
\end{cases}
$$

\begin{figure}[h!]
\begin{center}
\begin{tikzpicture}
  \tikzstyle{axes}=[thick]

\begin{scope}[scale=2]
  \draw[style=help lines,step=1cm] (-1.4,-1.4) grid (5.4,2.4);
    \begin{scope}[style=axes]
	\draw[->] (-1.5,0) -- (5.5,0) node[below] {$x$};
    	\draw[->] (0,-1.5) -- (0,2.5) node[left] {$y$};
    \end{scope}

    \draw[blue,style=thick] plot[variable=\t,samples=1000,domain=0.4:5.1] ({\t},{1/\t});
        \draw[blue] plot[variable=\t,samples=100,domain=0.7:5.1] ({\t},{2/\t});

        \draw[red] plot[variable=\t,samples=100,domain=-40:60] ({1.1*sec(\t)},{1.1*tan(\t)});
    \draw[red] plot[variable=\t,samples=100,domain=-25:40] ({2*sec(\t)},{2*tan(\t)});
        \draw[red] plot[variable=\t,samples=100,domain=-40:60] ({tan(\t)},{sec(\t)});

     \draw[red](-1,-1) -- (2,2);

\draw[blue] (0.4, 2.5) node[anchor=south] {$\scriptstyle{xy=1}$};
\draw (1.35, 0.82) node[anchor=west] {$\scriptstyle{(x,x^{-1})}$};
\draw (1,0.92) node[anchor=north] {$\scriptstyle{(1,1)}$};
\draw (0.81,1.325) node[anchor=east] {$\scriptstyle{(\tilde{x},\tilde{x}^{-1})}$};

\draw (-0.05,0) node[anchor=north west] {$\scriptstyle{(0,0)}$};
\draw (-0.05,0) node[anchor=north west] {$\scriptstyle{(0,0)}$};
\draw (1.05,0) node[anchor=north west] {$\scriptstyle{(\sqrt{x^2-x^{-2}},0)}$};
\draw (0,1) node[anchor=north east] {$\scriptstyle{(0,\sqrt{\tilde{x}^2-\tilde{x}^{-2}})}$};

\draw[->] (0.775, 1.2) .. controls (0.7, 1.15) and (0.5,.95) .. (0.02,.95);
\draw[->] (.9,.96 ) -- (0.03,0.09);
\draw[->] (1.275,0.75) .. controls (1.25,0.7) and (1.05,0.5) .. (1.05,0.02);

\end{scope}
\end{tikzpicture} 
\caption{\label{mainfigure} \emph{The gradient-Hamiltonian flow on $\Aenh$ for $G=\SL(2,\mathbb{C})$.}}
\end{center}
\end{figure}
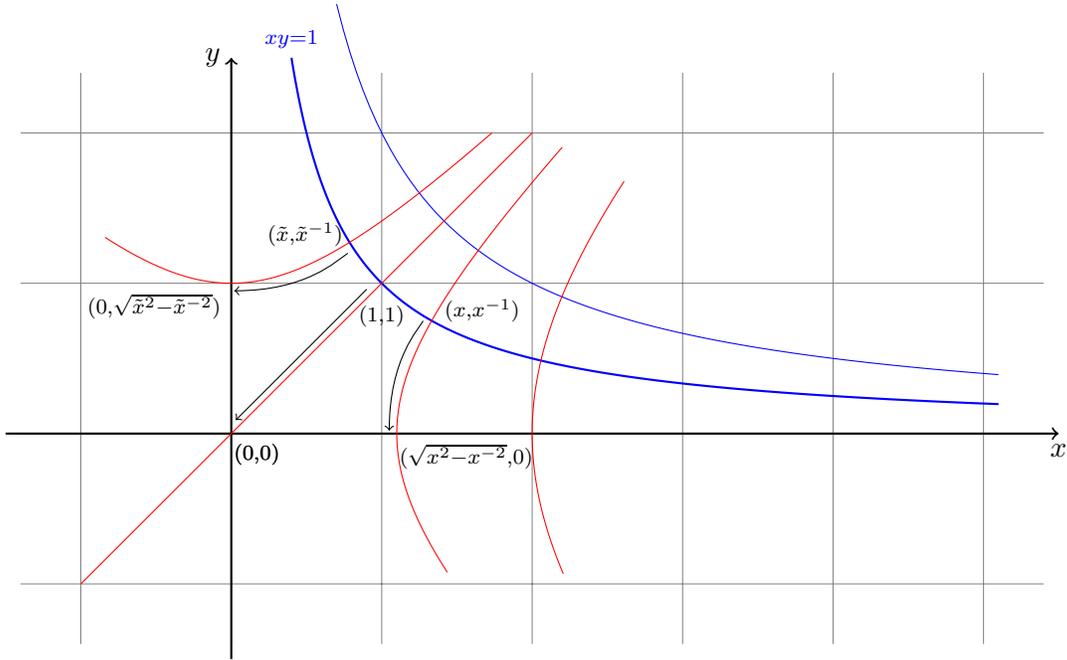

More intrinsically, we can describe the map (on all of $\SL(2)$) using the polar decomposition: 
 \begin{equation}\label{contrSL2}\SL(2,\mathbb{C})\ni B=UP=U\sqrt{B^*B}\mapsto U \sqrt{B^*B-\lambda I},\end{equation} where $\lambda$ is the smallest eigenvalue of $B^*B$.
 
 We remark two things: firstly, the momentum maps for the ${\mathcal{L}}$- and ${\mathcal{R}}$-actions of $K=\SU(2)$ are left invariant by the flow.
Secondly, the stabilizers of these actions remain the same, unless $B^*B=1$.  From this it follows that the map $\Phi$ on $\SL(2,\mathbb{C})$ simply collapses $\SU(2)\subset \SL(2,\mathbb{C})$ (which one can think of as the zero-section of $T^*\SU(2)\cong \SL(2,\mathbb{C})$).

\subsection{General linearly reductive case}

We will now discuss the general case  when $G$ is linearly reductive.  

Roughly speaking, we now want to follow the same strategy as in the $\SL(2,\mathbb{C})$ case for the whole embedding $$S_G\hookrightarrow \bigoplus_i \End(M_{\rho_i}) \hookrightarrow \End\left(\bigoplus_i M_{\rho_i}\right).$$
Note that the situation in general differs in two aspects: firstly, the embedding depends on the choices of the $\rho_i$'s, and in general no canonical choice can be made.
Secondly, the actual degeneration of $G$ we want to consider also involves a further choice that gives us the base-change as in Proposition (\ref{basechange}).

At this point we will assume that the cocharacter $h$ as used in Theorem \ref{ahcontract} is primitive in the coweight lattice, to ensure that we can think of $\mathbb{C}\times_{\A_G}S_G$ as a subvariety of $\bigoplus \End(M_{\rho_i})$ (or $\End(\bigoplus_i M_{\rho_i})$), and thus obtain a K\"ahler structure.

\begin{lemma}\label{comdiagbasechange}With the conventions as above, we have a commutative diagram
\begin{center}
\begin{tikzcd}
\mathbb{C}\times_{\A_G}S_G \ar{r} \ar{d}{\pi_h} &\End\left(\bigoplus_i M_{\rho_i}\right)\ar{d}{\det} \\ \mathbb{C}\ar{r} &\mathbb{C},\\
\end{tikzcd}
\end{center}
where the bottom row is given by a monoid morphism, and hence is given by \begin{equation}\label{int-red}z\in \mathbb{C}\mapsto z^n\end{equation} for some non-negative $n$.
\end{lemma}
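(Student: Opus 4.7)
My approach is to exploit the fact that every arrow in the diagram apart from the bottom one is a morphism of algebraic monoids, and then to deduce both the commutativity and the specific form of the bottom map from this.

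First, I would verify the relevant monoid structures. The embedding $\iota: S_G \hookrightarrow \bigoplus_i \End(M_{\rho_i}) \subset \End\bigl(\bigoplus_i M_{\rho_i}\bigr)$ is a monoid morphism, since each $\rho_i$ extends from a $\Genh$-representation to a monoid representation of $S_G$ by the construction of the Vinberg monoid, and block-diagonal inclusion respects multiplication and identity. The determinant $\det$ is multiplicative with $\det(I) = 1$, hence a monoid morphism to $(\mathbb{C}, \cdot)$. The map $\pi_G: S_G \to \A_G$ is the abelianization of the Vinberg monoid and hence a monoid morphism by construction, and the base change $\mathbb{C} \to \A_G$ induced by $h \in \Delta^{\vee}$ acts on coordinate rings as $\chi^{\alpha_i} \mapsto t^{\alpha_i(h)}$ with non-negative exponents, so it is a monoid morphism as well. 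It follows that the fiber product $\mathbb{C} \times_{\A_G} S_G$ inherits a monoid structure in which $\pi_h$ is a monoid morphism.

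Second, these observations force the square to commute with a monoid morphism on the bottom. The composition of the top row with $\det$ gives a monoid morphism $F: \mathbb{C} \times_{\A_G} S_G \to (\mathbb{C}, \cdot)$. I would argue that $\det \circ \iota: S_G \to \mathbb{C}$, being a monoid character, factors through $\pi_G$ via some morphism $\widetilde{f}: \A_G \to \mathbb{C}$; pulling $\widetilde{f}$ back along $h$ then yields the bottom arrow. In the semisimple case the factorization of $\det \circ \iota$ through $\pi_G$ is transparent: every character of $G$ is trivial, so $\det \circ \iota$ is $(G \times G)$-invariant on $S_G$ and descends via the affine GIT quotient $\A_G = S_G \cq (G \times G)$. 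In the general reductive case, one chooses the generators $\rho_i$ so that the total determinant character $\prod_i \det \rho_i$ is trivial on the image of $G \hookrightarrow \Genh$, equivalently so that the central characters cancel appropriately.

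Finally, the bottom arrow is a morphism of algebraic monoids $(\mathbb{C}, \cdot) \to (\mathbb{C}, \cdot)$, i.e.\ a regular self-map $f$ of the affine line satisfying $f(1) = 1$ and $f(z_1 z_2) = f(z_1) f(z_2)$ for all $z_1, z_2$. As $f$ must be a polynomial, these conditions force $f(z) = z^n$ for some $n \geq 0$ (with $n = 0$ giving the constant map $f \equiv 1$), which gives the claim. I expect the main obstacle to lie in verifying the factorization of $\det \circ \iota$ through $\pi_G$ in the general reductive case, where one must carefully control the behaviour of the determinant characters on $Z_G$ and the derived subgroup to ensure the required $(G \times G)$-invariance.
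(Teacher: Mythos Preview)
Your proposal is correct and closely parallels the paper's argument, though the framing differs slightly. The paper's proof is more compressed: it observes that $G$, as embedded in $\End(\bigoplus_i M_{\rho_i})$, lies in $\SL(\bigoplus_i M_{\rho_i})$, and then invokes that both $\det: \End(\bigoplus_i M_{\rho_i}) \to \mathbb{C}$ and $\pi_h: \mathbb{C}\times_{\A_G} S_G \to \mathbb{C}$ are categorical quotients by $\SL \times \SL$ and $G \times G$ respectively, with the $G \times G$-orbits being the intersection of the $\SL \times \SL$-orbits with $\mathbb{C}\times_{\A_G} S_G$. This immediately yields the bottom arrow. Your route---factoring the monoid character $\det\circ\iota$ through $\pi_G$ and then pulling back along $h$---reaches the same conclusion via the same underlying fact (that $\det$ is $(G\times G)$-invariant), but makes the monoid-theoretic structure more explicit and takes the detour through $\A_G$ rather than working directly on the base-changed family.

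One point where your treatment diverges: in the general reductive case you suggest constraining the choice of generators $\rho_i$ to force the total determinant character to be trivial on $G$, whereas the paper asserts outright that $G$ lands in $\SL$ without imposing further conditions on the $\rho_i$. The paper's claim is not a general fact about reductive subgroups of $\GL(V)$ (a torus gives a counterexample), so it is implicitly relying on the specific nature of the embedding coming from the Vinberg construction; your caution here is warranted, and your identification of this as the main obstacle is accurate.
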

\begin{proof} First note that, since $G$ is a closed connected linearly complex reductive subgroup of $\End(\bigoplus_i M_{\rho_i})$, it is necessarily contained in $\SL(\bigoplus M_{\rho_i})$.  It  now suffices to remark that both $\End(\bigoplus_i M_{\rho_i})\rightarrow \mathbb{C}$ and $\mathbb{C}\times_{\mathbb{A}_G} S_G\rightarrow \mathbb{C}$ are categorical quotients for the action of $\SL(\bigoplus_i M_{\rho_i})\times \SL(\bigoplus_i M_{\rho_i})$ and $G\times G$ respectively, and that the intersection of the former orbits with $\mathbb{C}\times_{\mathbb{A}_G}S_G$ are exactly the latter orbits.
\end{proof}

\subsubsection{Contraction of $G$ onto $\As(G)$}
In spite of these complications we shall now show that the basic strategy still goes through, provided that we calibrate the vector fields $V_{\det}$ on $\End(\bigoplus M_{\rho_i})$ appropriately, by using the corresponding integer $n$.    To this end we introduce other normalizations of gradient-Hamiltonian vector fields, labeled by positive integers $m$, by defining (for a flat morphism $\pi:\mathcal{X}\rightarrow \mathbb{C}$ as before) \begin{equation}V_{\pi, m}=-\frac{\nabla(\Re(\pi))}{|\!| \nabla\left(\Re(\pi)\right) |\!|^2} \left(m\left(\Re(\pi)\right)^{1-\frac{1}{m}} \right).\end{equation}
Observe that $V_{\pi, 1}$ gives back the original $V_{\pi}$ as in (\ref{grad-hamil}).  The effect of this is that, on $\pi^{-1}(\mathbb{R})\subset \mathcal{X}$, the flow of $V_{\pi, m}$ at time $t$ is the same as the flow of $V_{\pi}$ at time $t^m$.  Indeed, while the original normalization was chosen to satisfy $V_{\pi}(\Re(\pi))=-1$, we now have $$V_{\pi, m}(\Re(\pi))=-m\left(\Re(\pi)\right)^{1-\frac{1}{m}},$$  from where it follows that, if $f_{m,x}(t)$ is the flow-line through a point $x\in\pi^{-1}(1)$ determined by $V_{\pi, m}$, then $$\pi\circ f_{m,x}(t)=(-t+1)^m.$$
In particular, all of these $V_{\pi, m}$ have the same contraction map $\pi^{-1}(1)\rightarrow \pi^{-1}(0)$, if it exists.

A direct verification also gives
\begin{lemma}\label{compvectfields}
For any $\pi:\mathcal{X}\rightarrow \mathbb{C}$ as before, we have that on $\pi^{-1}(\mathbb{R})$ $$V_{\pi}=V_{\pi^m, m}$$ wherever these vector fields are defined.
\end{lemma}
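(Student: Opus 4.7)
The plan is a direct pointwise calculation on the real locus $\pi^{-1}(\mathbb{R}_{>0})$, where both vector fields are simultaneously defined and the fractional power $(\Re(\pi^m))^{1-1/m}$ is unambiguous. Because the paper has already introduced these vector fields as Riemannian gradient vector fields on a K\"ahler manifold (with $\|\cdot\|$ the Riemannian norm coming from the K\"ahler metric), everything reduces to understanding how $\nabla\Re(\pi^m)$ compares with $\nabla\Re(\pi)$ along this locus; after that, the identity should fall out by pure algebra without needing any geometric input beyond holomorphy of $\pi$.

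First I would exploit holomorphy of $\pi$ via the chain rule. Writing the holomorphic map $F\colon z\mapsto z^m$ as $F(x+iy)=u(x,y)+iv(x,y)$ and using $\Re(\pi^m)=u\circ(\Re\pi,\Im\pi)$ gives
\[
\nabla\Re(\pi^m) \;=\; (u_x\circ\pi)\,\nabla\Re\pi \;+\; (u_y\circ\pi)\,\nabla\Im\pi.
\]
At a point where $\Im\pi=0$ and $\Re\pi=r>0$ one has $u(x,0)=x^m$ and $v(x,0)=0$, so $u_x(r,0)=mr^{m-1}$, while by the Cauchy--Riemann equations $u_y(r,0)=-v_x(r,0)=0$. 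Hence along $\pi^{-1}(\mathbb{R}_{>0})$,
\[
\nabla\Re(\pi^m) = m(\Re\pi)^{m-1}\nabla\Re\pi, \qquad \|\nabla\Re(\pi^m)\|^2 = m^2(\Re\pi)^{2(m-1)}\|\nabla\Re\pi\|^2.
\]
Using $\Re(\pi^m)=(\Re\pi)^m$ on this locus and substituting into the definition,
\[
V_{\pi^m,m}=-\frac{\nabla\Re(\pi^m)}{\|\nabla\Re(\pi^m)\|^2}\cdot m(\Re(\pi^m))^{1-1/m}=-\frac{\nabla\Re\pi}{m(\Re\pi)^{m-1}\|\nabla\Re\pi\|^2}\cdot m(\Re\pi)^{m-1}=-\frac{\nabla\Re\pi}{\|\nabla\Re\pi\|^2}=V_\pi,
\]
which is the claimed identity.

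The only obstacle is bookkeeping the domain of validity: positivity of $\Re\pi$ is needed for the fractional power to make sense, and one needs $\nabla\Re\pi\neq 0$ so that both denominators are finite. Both restrictions are subsumed in the phrase ``wherever these vector fields are defined'' in the statement. The positivity is moreover consistent with the computation in the preceding paragraph of the paper, where the reparameterized flow was shown to satisfy $\pi(f_{m,x}(t))=(1-t)^m$, so that $V_{\pi^m,m}$ traces precisely the positive real branch along which $V_\pi$ is flowing; this is the sanity check that the algebraic identity above is capturing the right geometric content. I do not anticipate any conceptual difficulty beyond these bookkeeping considerations.
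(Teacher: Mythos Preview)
Your proposal is correct and is precisely the ``direct verification'' the paper alludes to but does not spell out; the paper offers no proof beyond that phrase, so your chain-rule computation using holomorphy of $\pi$ and the Cauchy--Riemann equations fills in exactly the intended argument. Your restriction to $\pi^{-1}(\mathbb{R}_{>0})$ is also the right domain, since the fractional power and the nonvanishing of $\nabla\Re(\pi^m)$ both require it, and this is indeed what ``wherever these vector fields are defined'' means in context.
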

We have 
\begin{proposition}\label{combinedflow}
If we equip  $\End(\bigoplus_i M_{\rho_i})$ with the vector field $V_{\det, n}$, then the resulting vector field  is tangential to $\pi_h^{-1}(\mathbb{R})$, where it moreover coincides with $V_{\pi_h}$.
\end{proposition}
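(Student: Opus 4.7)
The plan is to leverage Lemmas \ref{comdiagbasechange} and \ref{compvectfields} to reduce the statement to an intrinsic computation on $\mathbb{C}\times_{\A_G}S_G$, and then transfer it back to the ambient via the K\"ahler submanifold structure. First, apply Lemma \ref{compvectfields} to the subvariety with its intrinsic K\"ahler structure, taking $\pi = \pi_h$ and $m = n$. This yields $V_{\pi_h}=V_{\pi_h^n,n}$ on $\pi_h^{-1}(\mathbb{R})$. By Lemma \ref{comdiagbasechange}, $\pi_h^n$ is precisely the pullback of $\det$ from the ambient to the subvariety, so this intrinsic vector field is the $n$-rescaled gradient-Hamiltonian vector field of the restriction of $\det$ to $\mathbb{C}\times_{\A_G}S_G$.

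Next, invoke the fact that for any K\"ahler submanifold of a K\"ahler manifold the tangential part of the ambient gradient of a function restricted to the submanifold coincides with the intrinsic gradient of that restriction (this is a consequence of the fact that on a complex submanifold the ambient $(1,0)/(0,1)$ decomposition restricts tangentially). Consequently, once one knows the ambient $V_{\det,n}$ is tangent to the image of $\pi_h^{-1}(\mathbb{R})$ inside $\End(\bigoplus_i M_{\rho_i})$, the previous paragraph immediately identifies it with the intrinsic $V_{\pi_h^n,n}=V_{\pi_h}$ at such points.

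The core of the proof is therefore the tangency claim, which I would attack in two complementary steps. The first is to observe that $V_{\det,n}$ preserves $\Im\det$: indeed $\nabla(\Re\det)$ is the Hamiltonian vector field of $\Im\det$, and its positive rescaling still annihilates $\Im\det$ because $\{\Im\det,\Im\det\}=0$. On $\pi_h^{-1}(\mathbb{R})$ the relation $\det|_{\iota}=\pi_h^n\in\mathbb{R}$ forces $\Im\det=0$, so the flow preserves this real condition. The second step combines Lemma \ref{eerste} ($K$-bi-invariance of $V_{\det,n}$ under the ambient unitary group) with Lemma \ref{onlytorus} (flowlines emanating from $k_1 a k_2$ remain in $k_1\Aenh k_2$). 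Together these reduce the verification to a torus slice in $\Aenh$, on which $\det$ is simply the product of diagonal entries and $S_G\cap\Aenh$ is cut out by the monomial relations encoded in the weight semigroup $Q_G$; tangency on this slice can then be checked by a direct monomial computation using the embedding $S_G\hookrightarrow\bigoplus_i\End(M_{\rho_i})$ and the explicit formula for $\nabla(\Re\det)$ on positive diagonal matrices.

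The main obstacle is precisely this tangency step. Note that tangency to the real subvariety $\pi_h^{-1}(\mathbb{R})$ is strictly stronger than tangency to the enclosing complex subvariety $\mathbb{C}\times_{\A_G}S_G$, so one must simultaneously control the preservation of the algebraic equations defining the image of $S_G$ in the ambient (handled by the $KAK$-reduction to a torus slice) and of the reality condition (handled by the Hamiltonian argument above). Once these two ingredients are combined, the K\"ahler-submanifold principle together with Lemma \ref{compvectfields} assembles the identification $V_{\det,n}|_{\pi_h^{-1}(\mathbb{R})}=V_{\pi_h}$ stated in the proposition.
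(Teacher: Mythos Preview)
Your approach is essentially that of the paper, which in a single sentence invokes Lemma~\ref{compvectfields} together with the relation $\det\circ\iota=\pi_h^{\,n}$ from (\ref{int-red}) to identify $V_{\pi_h}$ with $V_{\det,n}$ along $\pi_h^{-1}(\mathbb{R})$. You have been more careful than the paper in isolating the tangency of the ambient $V_{\det,n}$ to the subvariety as the substantive step and in separating it into preservation of the reality condition (via the Hamiltonian interpretation of $\nabla\Re\det$) and tangency to $\mathbb{C}\times_{\A_G}S_G$ itself; your reduction of the latter to the torus slice via Lemmas~\ref{eerste} and~\ref{onlytorus} is a reasonable way to supply what the paper leaves implicit, though to carry it through one should note that the $\Kenh A_{\Genh}\Kenh$ decomposition of a point in $\Genh$ is simultaneously a valid $KAK$ decomposition in the ambient $\GL(\bigoplus_i M_{\rho_i})$, since $A_{\Genh}$ acts by positive real diagonal matrices in each weight basis.
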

\begin{proof} It suffices to remark that, under the embedding $\pi_h^{-1}(\mathbb{R})\subset \mathbb{C}\times_{\A_G} S_G\hookrightarrow \End(\bigoplus M_{\rho_i})$, the vector field $V_{\pi_h}$ corresponds to $V_{\det, n}$ by Lemma \ref{compvectfields} and (\ref{int-red}).
\end{proof}

We can finally conclude with
\begin{corollary}\label{flowresult}
The flow at time $t=1$ of $V_{\pi_h}$ extends to a contraction map $\Phi_f$ from $G$ onto $\As(G)$.
\end{corollary}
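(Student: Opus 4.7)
The plan is to combine equivariance with an explicit torus computation, which bypasses the non-properness of the family $\pi_h$ (the obstacle that prevents a direct appeal to the framework of \cite{HK}). By Proposition \ref{combinedflow}, on the real locus the vector field $V_{\pi_h}$ agrees with $V_{\det,n}$ on the ambient matrix space $\End(\bigoplus_i M_{\rho_i})$, so I will work with the latter. Lemma \ref{eerste} gives that this vector field is $K\times K$-equivariant, and Lemma \ref{onlytorus} shows that a flow line starting at $g = k_1 a k_2$ remains inside $k_1 A_{\Genh} k_2$ for all time. This reduces the problem to analyzing the time-$1$ flow on $A\subset G$ and transporting it back by equivariance.

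On $A$, I will choose a basis of simultaneous weight vectors for $\bigoplus_i M_{\rho_i}$, so that each $a = \exp(x) \in A$ is diagonal with entries $e^{\mu_j(x)}$ for the weights $\mu_j$ appearing. The ODE for $V_{\det,n}$ on such diagonals can be integrated explicitly, generalizing the hyperbolic trajectories of Figure \ref{mainfigure}: the trajectories lie on level sets of an explicit quadratic Hamiltonian, and the limit $\lim_{t\to 1^-}\phi_t(a)$ exists as a diagonal matrix in $\overline{A}\cap \As(G)$, with diagonal entries corresponding to weights that are not $h$-maximal collapsing to zero and the remaining entries stabilizing at explicit values. This limit depends continuously on $a$ and surjects onto $\overline{A}\cap \As(G)$.

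The extension to $G$ is then defined by $\Phi_f(k_1 a k_2) := k_1\,\Phi_f(a)\, k_2$. Continuity and surjectivity follow from the torus analysis together with the $KAK$-decomposition of $\As(G)$, inherited from the reductive monoid structure of $S_G$; properness will follow since the time-$1$ map compresses the non-compact directions of $G$ onto the non-compact directions of $\As(G)$ in a controlled way visible from the explicit torus formula.

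The main obstacle is well-definedness at points where the $KAK$-decomposition is not unique: for $a$ lying on a wall of the Weyl chamber, the pair $(k_1,k_2)$ is only defined up to the stabilizer $K_I\times K_I$ of $a$, so one must check that this stabilizer also fixes the limit $\Phi_f(a)$. This holds because the diagonal entries that collapse to zero are precisely those in the directions where the stabilizer of $a$ enlarges beyond that of a generic element of $A$; the additional $K_I$-symmetries therefore act trivially on $\Phi_f(a)$ and the definition descends to $G$. Once this is verified, continuity of $\Phi_f$ on the full group $G$ follows from continuity on $A$ and the continuity of the $KAK$-decomposition.
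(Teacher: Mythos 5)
Your proposal follows the same route as the paper: combine Proposition~\ref{combinedflow} (which identifies $V_{\pi_h}$ with $V_{\det,n}$ on the real locus of the ambient matrix space), Lemma~\ref{eerste} ($K\times K$-invariance of the vector field) and Lemma~\ref{onlytorus} (confinement of flow lines to $k_1 A k_2$), and then do the torus computation, with the $\SL(2,\C)$ hyperbolas as the model. The paper's proof is terse and leaves exactly the steps you spell out to the reader, so the outline matches.

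However, the argument you give for well-definedness is incorrect as stated, and the well-definedness question itself is largely a red herring. You claim that ``the diagonal entries that collapse to zero are precisely those in the directions where the stabilizer of $a$ enlarges beyond that of a generic element of $A$.'' This is false in both directions. For a regular $a$ (no ties among the entries, so no stabilizer enlargement) at least one diagonal entry must still collapse to zero, since the flow drives the abelianized coordinate to $0$. And conversely, tied entries that are not the smallest do \emph{not} collapse: in $\SL(3,\C)$ with $a=\operatorname{diag}(x_1,x_1,x_3)$, $x_1>x_3>0$, the flow preserves $x_i^2-x_j^2$, so it sends $a$ to $\operatorname{diag}(c,c,0)$ with $c=\sqrt{x_1^2-x_3^2}>0$; the stabilizer enlarges on the $e_1,e_2$-block but those entries stay nonzero. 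The correct mechanism is the invariant you already have implicitly: the flow of $V_{\det}$ on diagonals preserves all pairwise differences $x_i^2-x_j^2$, hence preserves the partition of coordinates into equal groups; since the $K\times K$-stabilizer of a diagonal matrix is determined by exactly this partition, $\operatorname{Stab}(a)\subset\operatorname{Stab}(\Phi_f(a))$ and the extension descends. More simply, there is no need to \emph{define} $\Phi_f$ by the formula $\Phi_f(k_1ak_2)=k_1\Phi_f(a)k_2$ and then check consistency: the flow of a vector field is canonically defined, Lemma~\ref{eerste} makes the flow $K\times K$-equivariant wherever it exists, and the torus computation shows the time-$1$ limit exists on $A$; equivariance of the resulting extension is then automatic, not an assumption you have to verify. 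I would replace the stabilizer-enlargement heuristic with one of these two corrected arguments; everything else is fine.
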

\begin{proof} 
It suffices to remark that the proofs of Lemmas \ref{eerste} and \ref{onlytorus} are still valid for each $\End(M_{\rho_i})\rightarrow \mathbb{C}$, so we get a contraction map from each $\SL(M_{\rho_i})$ to the corresponding singular elements of $\End(M_{\rho_i})$.  Recall also that $G$ is contained in $\SL(\bigoplus_i M_{\rho_i})$.  The rest now follows from Proposition \ref{combinedflow}.
\end{proof}

\subsubsection{Semi-simple situation}\label{ssonly}
When $G$ is semi-simple, we can also describe the situation slightly differently.  
The main relevance for restricting to the semi-simple case is the following:
\begin{lemma}
If $G$ is semi-simple, then the embedding (\ref{embedding}) gives rise to the commutative diagram
\begin{center}
\begin{tikzcd}
S_G \ar{r} \ar{d} & \bigoplus \End(M_{\rho_i}) \ar{d}{(\det_i)_i} \\
\A_G\ar{r} & \mathbb{C}^r,
\end{tikzcd}
\end{center}
where $r$ is the number of $\rho_i$ chosen.
\end{lemma}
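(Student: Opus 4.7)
The plan is to use the semi-simplicity of $G$ to show that each determinant $\det_i \colon \End(M_{\rho_i}) \to \mathbb{C}$, restricted to the image of $S_G$, is invariant under the left and right multiplications by $G$. Once this is established, since $\A_G = S_G \q (G\times G)$ by definition, each such invariant function descends to $\A_G$, and assembling the $r$ of them furnishes the bottom horizontal arrow.

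First I would record the structural fact that the embedding $S_G \hookrightarrow \bigoplus_i \End(M_{\rho_i})$ of~(\ref{embedding}) is a \emph{monoid} morphism: each summand is the extension to $S_G$ of the irreducible $\Genh$-representation $M_{\rho_i}$, so the component maps $S_G \to \End(M_{\rho_i})$ respect the monoid products. In particular, the composite $\det_i \circ (\text{embedding})$ is multiplicative on $S_G$.

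The key input from semi-simplicity is that a connected semi-simple complex group equals its derived subgroup and hence admits no non-trivial characters. Applied to the restriction of $M_{\rho_i}$ along $G \hookrightarrow \Genh$, this says $G \subset \SL(M_{\rho_i})$, i.e.\ $\det_i|_G \equiv 1$. Combined with multiplicativity, for any $g_1,g_2 \in G$ and $s \in S_G$ one obtains
\begin{equation*}
\det_i(g_1 \cdot s \cdot g_2) \;=\; \det_i(g_1)\,\det_i(s)\,\det_i(g_2) \;=\; \det_i(s),
\end{equation*}
so each $\det_i|_{S_G}$ is $(G\times G)$-invariant.

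Since $\mathbb{C}[\A_G] = \mathbb{C}[S_G]^{G\times G}$, each $\det_i|_{S_G}$ therefore descends to a regular function $\overline{\det_i}$ on $\A_G$; collecting these $r$ functions produces the desired morphism $\A_G \to \mathbb{C}^r$, and commutativity of the square is automatic from the construction. I do not anticipate a serious obstacle: the only point requiring real attention is the verification that the $S_G$-embedding is a monoid morphism, which is immediate from Vinberg's construction. It is precisely in the non-semi-simple case that this approach fails, since the center of $G$ can act by non-trivial scalars on $M_{\rho_i}$, producing a non-trivial character $\det_i|_G$; this is what forced the comparatively more elaborate setup of Lemma~\ref{comdiagbasechange}.
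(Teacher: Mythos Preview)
Your argument is correct and shares the decisive input with the paper's proof: semi-simplicity forces $G\subset\SL(M_{\rho_i})$, so $\det_i$ restricted to $S_G$ is $(G\times G)$-invariant. Where you diverge is in how the bottom arrow is produced. The paper works more explicitly: it observes that $T_G^{\abs}$ acts on each $M_{\rho_i}$ through scalars (Schur), so $\det_i$ restricted to $\Genh$ is a character of $T_G^{\abs}/Z_G$; it then argues that this character extends from the dense torus to the toric variety $\A_G$ because the weights $\rho_i$ lie in the cone $Q_G$. Your route is more functorial: you simply invoke $\C[\A_G]=\C[S_G]^{G\times G}$ and descend the invariant function directly. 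Your version is shorter and avoids the toric-extension step; the paper's version, in exchange, makes the bottom map concrete as a tuple of characters on $T_G^{\abs}/Z_G$, which is useful for the explicit computations that follow (e.g.\ identifying the integers $n_i$ in~(\ref{numbers})).
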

\begin{proof} Recall that each $M_{\rho_i}$ is an irreducible representation of $\Genh$.  We have that $G\subset S_G^\times$, being semi-simple, gets mapped to $\SL(M_{\rho_i})\subset \End(M_{\rho_i})$, and by Schur's lemma $T_G^{\abs}$ acts diagonally on $M_{\rho_i}$.  Therefore $\det_i$ will be trivial on $Z_G$, and descend to $T_G^{\abs}/Z_G$.  We hence find an induced morphism $T_G^{\abs}/Z_G\rightarrow (\mathbb{C}^*)^r$.  That this extends to $\A_G\rightarrow \mathbb{C}^r$ now just follows from the fact that all  weights $\rho_i$ are contained in the cone $Q_G$.
\end{proof}
 All together we have the diagram

\begin{center}
\begin{tikzcd}
\mathbb{C}\times_{\A_G}S_G \ar{r} \ar{d}{ \pi_h} & S_G \ar{r} \ar{d}{\pi_G} & \bigoplus \End(M_{\rho_i}) \ar{d}{(\det_i)_i}  \\ \mathbb{C}\ar{r} \ar[bend right]{rr}{\Psi} &
\A_G\ar{r} & \mathbb{C}^r \\
\end{tikzcd}
\end{center}
In this diagram all maps are monoid-morphisms, and in particular $\Psi$ gives us $r$ positive integers $n_i$, such that \begin{equation}\label{numbers}\Psi(z)=(z^{n_i})_i,\end{equation} and $n=\sum_i n_i$, where $n$ is as in Lemma \ref{comdiagbasechange}.

\begin{lemma}
On $\pi_h^{-1}(\mathbb{R})$, we have that $(V_{\det_i,n_i})_i$ and $V_{\det,n}$ coincide, and hence give rise to the same contraction map $G\rightarrow\As(G)$.
\end{lemma}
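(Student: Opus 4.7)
The plan is to apply Lemma \ref{compvectfields} in parallel to each individual determinant factor, in direct analogy with how Proposition \ref{combinedflow} handles the combined determinant. The pivotal input is the monoid-morphism identity \eqref{numbers}, which tells us that on the embedded image of $\pi_h^{-1}(\mathbb{R}) \subset \mathbb{C}\times_{\A_G}S_G$ inside $\bigoplus_j \End(M_{\rho_j})$, we have $\det_i \equiv \pi_h^{n_i}$ for each $i$, and correspondingly $\det = \prod_i \det_i \equiv \pi_h^{\sum_i n_i} = \pi_h^{n}$.

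First, I would apply Lemma \ref{compvectfields} with $\pi=\pi_h$ and exponent $m=n_i$. This gives $V_{\pi_h} = V_{\pi_h^{n_i},\, n_i}$ on $\pi_h^{-1}(\mathbb{R})$. Combined with the identification $\det_i=\pi_h^{n_i}$ from \eqref{numbers}, and repeating the argument of Proposition \ref{combinedflow} factor-by-factor (that is, using the Hermitian structure on $\End(M_{\rho_i})$ and the tangency of $\nabla\Re\det_i$ inherited from the embedding), this shows that each $V_{\det_i,n_i}$, when restricted to $\pi_h^{-1}(\mathbb{R})$, coincides with $V_{\pi_h}$. Since this holds for every $i$, the collection $(V_{\det_i,n_i})_i$ jointly agrees with $V_{\pi_h}$ on $\pi_h^{-1}(\mathbb{R})$.

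Second, Proposition \ref{combinedflow} is exactly the same statement applied to the total determinant: $V_{\det,n}$ is tangent to $\pi_h^{-1}(\mathbb{R})$ and restricts there to $V_{\pi_h}$. Combining the two steps, both $V_{\det,n}$ and $(V_{\det_i,n_i})_i$ restrict to the same vector field $V_{\pi_h}$ on $\pi_h^{-1}(\mathbb{R})$, hence they coincide there. The final statement that both give rise to the same contraction map $G\rightarrow \As(G)$ is then immediate from Corollary \ref{flowresult}, since the contraction is defined as the time-$1$ flow of $V_{\pi_h}$.

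The main obstacle will be verifying the factor-wise analogue of Proposition \ref{combinedflow}, namely that each $V_{\det_i,n_i}$ (computed with the Hermitian form on $\End(M_{\rho_i})$, which is the $i$-th summand of the K\"ahler structure on $\bigoplus_j \End(M_{\rho_j})$ in which $S_G$ is embedded) really is tangent to $\pi_h^{-1}(\mathbb{R})$ and realises $V_{\pi_h}$ after restriction. This is where one uses that the monoid morphism $\Psi$ of \eqref{numbers} controls each $\det_i$ separately, so the argument of Proposition \ref{combinedflow}, which in the total-determinant case only relied on $\det=\pi_h^n$ together with Lemma \ref{compvectfields}, carries over verbatim with $(\det,n)$ replaced by $(\det_i,n_i)$ and $\pi_h^n$ replaced by $\pi_h^{n_i}$.
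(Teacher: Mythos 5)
Your proposal is correct and follows essentially the same route as the paper: both arguments establish that each $V_{\det_i,n_i}$ restricts to $V_{\pi_h}$ on $\pi_h^{-1}(\mathbb{R})$ by running the argument of Proposition \ref{combinedflow} factor-by-factor (using $\det_i=\pi_h^{n_i}$ from \eqref{numbers} together with Lemma \ref{compvectfields}), and then compare with the conclusion of Proposition \ref{combinedflow} for the total determinant. The only cosmetic differences are that you spell out the application of Lemma \ref{compvectfields} with $m=n_i$ explicitly and cite Corollary \ref{flowresult} rather than Proposition \ref{combinedflow} for the final passage to contraction maps.
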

\begin{proof}As in the proof of Proposition \ref{combinedflow}, for each of the constituent maps $\phi_i:S_G\mapsto \End(M_{\rho_i})$ of (\ref{embedding}) we have that $d\phi_i\left(V_{\pi_h}|_p\right)=V_{\det_i}|_{\phi_i(p)}$, for any $p\in\pi_h^{-1}(\mathbb{R})$.  From this it follows that $V_{\pi_h}$ and $\left( V_{\det_i,n_i}\right)_i$ coincide on $\pi_h^{-1}(\mathbb{R})$.  The rest follows from Proposition \ref{combinedflow}.  
\end{proof}

\subsubsection{Identification of $\As(G)$ with $(T^*K)^{\sc}$}

We now want to show that (for general reductive $G$) the contraction map from $G$ to $\As(G)$ is the same as the map $\Phi$, when a suitable identification of $\As(G)$ and $(T^*K)^{\sc}$ is made.  A key tool for this is a section of the momentum map.  Recall from (\ref{Phithroughsection}) that we can use a preferred section $s_{(T^*K)^{\sc}}$ of the momentum map $\mu_{\mathcal{R}}$ of the $\mathcal{R}$-action of $K$ on $(T^*K)^{\sc}$ to determine the map $\Phi$.  We will now show that the same is true for the contraction map $\Phi_f$ coming from the gradient-Hamiltonian flow.  Using this we will then show that $\Phi$ and $\Phi_f$ coincide.

The section of $\mu_{\mathcal{R}}$ for $\As(G)$ we shall use is constructed using a section $s_{S_G}$ for the $\mathcal{R}$ action of $\Kenh$ on all of $S_G$ that was described in \cite[Appendix B]{MT}, which in turn was based on the fact that for any matrix space $\End(M)$ (where $M$ is Hermitian), the map $B\mapsto \sqrt{-iB^*B}$ provides a section of the momentum map $\mu_{\mathcal{R}}$ of the $\mathcal{R}$ action of $\U(M)$ on $\End{M}$.  By \cite[Theorem 4.9]{Sj2} and (\ref{vinbdef}), we know that $\mu_{\mathcal{R}}(S_G)=\Kenh.Q_G$.  Hence we can write every element in $\mu_{\mathcal{R}}(S_G)$ as $k.(\mu,\mu+\sum_i m_i\alpha_i)$, following the notation of (\ref{thecone}).  

\begin{lemma}
The contraction map $\Phi_f$ from Corollary \ref{flowresult} can be described by \begin{equation}\label{Phif}\Phi_f:T^*K\cong G\subset S_G\rightarrow \As(G): g=\mathcal{L}_k s_{T^*K}(\chi)\mapsto \mathcal{L}_k s_{S_G}(\chi-\sum_i m_i\alpha_i)\end{equation} where $\chi\in \Kenh.Q_G$ can be written as $\chi=\tilde{k}.(\mu,\mu+\sum_i m_i\alpha_i)$ in the notation of (\ref{thecone}), for some $\tilde{k}\in \Kenh$.
\end{lemma}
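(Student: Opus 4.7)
The plan is to reduce, via $K$-equivariance and the $KAK$-decomposition, to an explicit matrix-space calculation on the abelian subgroup $A_{\Genh}$, and then to recognise the result as the section $s_{S_G}$ evaluated at the shifted momentum.

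First, I would use Lemma~\ref{eerste} to observe that $V_{\pi_h}$ is both $\mathcal{L}$- and $\mathcal{R}$-invariant under $K$, so that the contraction map $\Phi_f$ shares these equivariances. The $\mathcal{L}$-equivariance immediately reduces (\ref{Phif}) to the case $k=1$, i.e.\ to determining $\Phi_f(s_{T^*K}(\chi))$. The $\mathcal{R}$-equivariance further allows one to assume $\chi$ lies in the cone $Q_G$ itself rather than in the full sweep $\Kenh\cdot Q_G$: writing $\chi=\tilde{k}\cdot(\mu,\mu+\sum_i m_i\alpha_i)$, one can absorb the factor $\tilde{k}$ by equivariance and reduce to the case $\chi=(\mu,\mu+\sum_i m_i\alpha_i)\in Q_G$.

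Second, by Lemma~\ref{onlytorus} applied factor-by-factor to the embedding $S_G\hookrightarrow \bigoplus_i \End(M_{\rho_i})$ from (\ref{embedding}), combined with Proposition~\ref{combinedflow}, the flow line of $V_{\pi_h}$ through $s_{T^*K}(\chi)$ stays inside $A_{\Genh}$. The problem therefore reduces to evaluating the time-$1$ flow of $V_{\det,n}$ on a positive diagonal matrix in each summand $\End(M_{\rho_i})$.

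The heart of the argument is the following explicit computation, generalising~(\ref{contrSL2}): on each summand $\End(M_{\rho_i})$, the flow of $V_{\det,n}$ at time $1$ sends a positive diagonal matrix $D$ to $\sqrt{D^{*}D-\lambda_{\min}I}$, where $\lambda_{\min}$ is the smallest eigenvalue of $D^{*}D$. Translated into the $\Kenh$-momentum picture via the section $s_{S_G}$, this shifts $(\mu,\mu+\sum_i m_i\alpha_i)\in Q_G$ to its projection $(\mu,\mu)$ on the boundary face of $Q_G$ parameterising $\As(G)$, i.e.\ subtracts off $\sum_i m_i\alpha_i$ from $\chi$. The resulting element is by definition $s_{S_G}(\chi-\sum_i m_i\alpha_i)$, and the formula (\ref{Phif}) then follows by reintroducing the $\mathcal{L}$-equivariance and the $\tilde{k}$ factor.

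The main obstacle will be the final translation step: one must verify that the simultaneous componentwise flows on each $\End(M_{\rho_i})$ fit together to give a single well-defined element of $\As(G)\subset S_G$, and that this matches $s_{S_G}(\chi-\sum_i m_i\alpha_i)$ rather than some other element in the same $\Kenh$-orbit. The key ingredient is the $\Genh\times\Genh$-equivariance of the embedding (\ref{embedding}) together with the uniqueness of the polar-decomposition section, but the weight combinatorics across the different $\rho_i$ requires careful bookkeeping.
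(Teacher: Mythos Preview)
Your approach is workable but takes a genuinely different route from the paper's.  You aim to \emph{compute} the time-$1$ flow explicitly on $A_{\Genh}$ (via the formula $D\mapsto\sqrt{D^*D-\lambda_{\min}I}$ on each factor $\End(M_{\rho_i})$) and then match the outcome against $s_{S_G}(\chi-\sum_i m_i\alpha_i)$.  The paper instead gives a short \emph{uniqueness} argument: the flow of $V_{\pi_h}$ is $(K\times K)$-equivariant and preserves the image of the section $s(\mu_{\mathcal{R}}(S_G))$ (this being the case for each $V_{\det_i}$), hence the $K\times K$-momentum is invariant along the flow; since the momentum image of $\As(G)$ under the $\mathcal{R}$-action of $\Kenh$ lies in $K\cdot\{(\mu,\mu)\mid \mu\in\Delta\}$, formula~(\ref{Phif}) is the \emph{only} map $G\to\As(G)$ compatible with these constraints.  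No explicit flow computation is needed.

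Your explicit formula $\sqrt{D^*D-\lambda_{\min}I}$ is in fact correct on each $\End(M_{\rho_i})$, but you would have to justify it (the key observation is that along the diagonal flow $x_i\dot{x}_i$ is proportional to $\det$ and hence independent of $i$, so the differences $x_i^2-x_j^2$ are conserved; this is what forces the smallest eigenvalue to vanish first and determines the endpoint).  The obstacle you flag at the end---verifying that the componentwise results on the various $\End(M_{\rho_i})$ assemble into the single element $s_{S_G}(\chi-\sum_i m_i\alpha_i)$ rather than another point of the same $\Kenh$-orbit---is precisely what the paper's momentum-preservation argument circumvents: once you know the $K$-momentum is unchanged and the endpoint sits in the section image over the diagonal face, the answer is pinned down without any weight bookkeeping across the $\rho_i$.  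Your route would work, but it is longer and the final coherence check is exactly where the paper's shortcut pays off.
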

This is the general form of (\ref{contrSL2}) for arbitrary semi-simple $G$.
\begin{proof}
It suffices to remark that the flow of $V_{\pi}$, and hence $\Phi_f$, is $(K\times K)$-equivariant, and preserves $s(\mu_{\mathcal{R}}(S_G))$ (since this is the case for all $V_{\det_i}$).   As a consequence we have that the flow of $V_{\pi}$ and $\Phi_f$ leave the moments maps for both $K$-actions on $S_G$ invariant.  Since the image of $\As(G)$ under the momentum map for the $\mathcal{R}$-action of $\Kenh$ lies in $K.\{ (\mu,\mu)\subset \mathfrak{t}_{\Kenh}^+ | \mu \in \Delta \}$, (\ref{Phif}) holds as it is the only map from $G$ to $\As(G)$ that satisfies these requirements.
\end{proof}

Crucially for us we have the following observation:
\begin{lemma}\label{samestab}
Given $\nu\in\mathfrak{k}^*$, the stabilizers for the $\mathcal{L}$-action of $K$  at $s_{(T^*K)^{\sc}}(\nu)$ in $(T^*K)^{\sc}$ or $s_{S_G}(\nu,\nu)\in\As(G)$ are identical.
\end{lemma}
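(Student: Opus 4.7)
The plan is to establish that each stabilizer equals $[K_\nu, K_\nu]$, the derived subgroup of the coadjoint stabilizer $K_\nu$ of $\nu$, and then compare. By $\mathcal{L}$-equivariance of both sections, one may reduce to the case where $\nu\in\Delta_I$ for the unique relatively open face $\Delta_I\subset\Delta$ containing $\nu$, so that $K_\nu=K_I$.

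For $s_{(T^*K)^{\sc}}(\nu)=((1,\nu),(1,\nu))$ in $(T^*K)^{\sc}=\bigl(E_{\mathcal{R}}(T^*K)\times E_{\mathcal{L}}(T^*K)\bigr)\q_{\! 0}\mathbb{T}$, the $\mathcal{L}$-action of $k'\in K$ produces $((k',\nu),(1,\nu))$. For this to represent the same class one needs $t\in\mathbb{T}$ and $p_1,p_2\in[K_I,K_I]$ so that after the $\mathbb{T}$-identification $((a,w),(b,v))\sim((at^{-1},w),(tb,v))$ combined with the implosion relations $(k,w)\sim(kp,w)$ on $E_{\mathcal{R}}$ and $(h,v)\sim(ph,v)$ on $E_{\mathcal{L}}$ (both for $p\in[K_I,K_I]$), the original point is recovered. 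The ensuing constraints $k't^{-1}p_1=1$ and $p_2 t=1$ force $t\in\mathbb{T}\cap[K_I,K_I]$ and hence $k'\in[K_I,K_I]$; the reverse inclusion is immediate by taking $t=1$, $p_2=1$, $p_1=(k')^{-1}$.

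For $s_{S_G}(\nu,\nu)\in\As(G)$ I would exploit the matrix embedding $S_G\hookrightarrow\bigoplus_i\End(M_{\rho_i})$ in which $s_{S_G}$ is given componentwise by the polar-decomposition section. In each summand, $s_{S_G}(\nu,\nu)$ is positive-Hermitian and diagonal in the $T_{\Kenh}$-weight decomposition of $M_{\rho_i}$ (by the $T_{\Kenh}$-invariance of $(\nu,\nu)$), with eigenvalue on a weight subspace $M_{\rho_i,\beta}$ determined by the pairing of $\beta$ with $(\nu,\nu)$. The $\mathcal{L}$-stabilizer of the Hermitian-positive operator $P_i$ in $\U(M_{\rho_i})$ is the unitary group of $\ker P_i$, so the stabilizer of $s_{S_G}(\nu,\nu)$ in $K$ is the subgroup acting trivially on $\bigoplus_i\mathrm{range}(P_i)$, the sum of those weight subspaces paired nontrivially with $(\nu,\nu)$. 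A representation-theoretic analysis -- using that $(\nu,\nu)$ lies on the face of $Q_G$ corresponding to $\As(G)$ and that the $\rho_i$ generate the monoid $Q_G\cap\mathfrak{X}_{\Genh}$ -- identifies this stabilizer with $[K_I,K_I]$.

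The main obstacle is this second identification: one must verify both that $[K_I,K_I]$ acts trivially on every contributing weight subspace -- which reduces to the vanishing of all $K_I$-characters on the commutator subgroup -- and that any element of $K_I\setminus[K_I,K_I]$ fails to fix some vector in the range of some $\rho_i$, which uses that the generators $\rho_i$ of $Q_G$ are sufficiently numerous to separate characters of $K_I/[K_I,K_I]$. Comparing the two resulting expressions then yields the claimed equality of stabilizers.
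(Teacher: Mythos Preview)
Your computation of the $\mathcal{L}$-stabilizer in $(T^*K)^{\sc}$ is correct and makes explicit a step the paper leaves implicit. One small quibble: the phrase ``by $\mathcal{L}$-equivariance of both sections'' is not quite right, since the sections are sections of $\mu_{\mathcal{R}}$ and are not $\mathcal{L}$-equivariant in any useful sense. What actually reduces you to $\nu\in\Delta_I$ is that the $\mathcal{L}$-moment map at both section images recovers (minus) $\nu$, so any $\mathcal{L}$-stabilizer sits inside $K_\nu$, and the conjugation carrying $K_\nu$ to $K_I$ carries both stabilizers along. This is a minor repair.

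The substantive divergence from the paper is in your treatment of the $\As(G)$ side. The paper does not attempt a direct computation via the matrix model; it simply invokes Vinberg's classification (\cite[\S 0.7, Theorem~7]{Vi1}) of the $\Genh\times\Genh$ orbits in $S_G$ and their stabilizers. Orbits are indexed by essential faces $(I,J)$ of $Q_G$; those lying in $\As(G)$ have $I=\emptyset$, and Vinberg's description gives the compact stabilizer as $[K_J,K_J]\times[K_J,K_J]$, which matches what you computed on the $(T^*K)^{\sc}$ side.

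Your matrix-model approach is not wrong, but the ``representation-theoretic analysis'' you flag as the main obstacle is precisely the content of Vinberg's result in this special case, so you would essentially be reproving it. Two places where your sketch needs more care: first, the assertion that $s_{S_G}(\nu,\nu)$ is diagonal in the $T_{\Kenh}$-weight decomposition does not follow immediately from $T_{\Kenh}$-invariance of $(\nu,\nu)$ under the coadjoint action (that only tells you the moment image is fixed, not the point itself under the right action), and needs a separate argument from the structure of the section. Second, showing that no $k\in K_I\setminus[K_I,K_I]$ fixes every $\mathrm{range}(P_i)$ is where the choice of generators $\rho_i$ of $Q_G\cap\mathfrak{X}_{\Genh}$ genuinely enters, and is not a one-liner. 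The paper's citation to Vinberg black-boxes exactly this work; your route would make it explicit but at nontrivial cost.
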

\begin{proof}
The stabilizers for all orbits in $S_G$ of the $\Genh\times \Genh$ action are described in \cite[\S 0.7]{Vi1}.  In particular, orbits are enumerated by so-called \emph{essential} faces of the cone $Q_G$, indexed by (certain) pairs $(I,J)$, where both $I$ and $J$ are subsets of the sets of fundamental weights.  For $\As(G)$, we have that $I=\emptyset$, and any $J$ makes $(\emptyset,J)$ into an essential pair.  Specialising the description of \cite[Theorem 7]{Vi1} to this case, we see that the compact form of the stabilizer group of such an orbit (intersected with $K\times K$) is indeed the same as the stabilizer for an element of $s_{(T^*K)^{\sc}}(K.\Delta^{\vee}_J)$, namely $[K_J,K_J]\times [K_J,K_J]$. 
\end{proof}

With this in mind it is clear how we want to identify $(T^*K)^{\sc}$ with $\As(G)$ symplectically: equip the latter with the K\"ahler symplectic form coming from the matrix embedding (\ref{embedding}), and then simply put $$\widetilde{\Phi}: (T^*K)^{\sc}\rightarrow \As(G): \mathcal{L}_k \left(s_{(T^*K)^{\sc}}(\nu)\right)\mapsto\mathcal{L}_k\left(s_{S_G}(\nu,\nu)\right).$$

\begin{lemma} \label{2incarn}
This map $\widetilde{\Phi}$ is a $(K\times K)$-equivariant symplectomorphism.
\end{lemma}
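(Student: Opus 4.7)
The plan is to verify $\widetilde{\Phi}$ in three stages: well-definedness and bijectivity, equivariance and compatibility with the momentum maps, and finally the symplectic property (which I would deduce from a uniqueness principle rather than computed directly).

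First I would address well-definedness. An arbitrary point of $(T^*K)^{\sc}$ determines $\nu \in K.\Delta$ uniquely (since $s_{(T^*K)^{\sc}}$ is a section of $\mu_{\mathcal{R}}$), but the ``$k$'' in the expression $\mathcal{L}_k s_{(T^*K)^{\sc}}(\nu)$ is only well-defined modulo the $\mathcal{L}$-stabilizer of $s_{(T^*K)^{\sc}}(\nu)$. Lemma~\ref{samestab} tells us exactly that this stabilizer equals the $\mathcal{L}$-stabilizer of $s_{S_G}(\nu,\nu) \in \As(G)$, so the right-hand side of the definition of $\widetilde{\Phi}$ is independent of the chosen representative. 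That the sections $s_{(T^*K)^{\sc}}$ and $s_{S_G}(-,-)$ surject onto the respective $\mathcal{R}$-momentum images (which coincide, being $K.\Delta \subset \mathfrak{k}^*$ in either case), combined again with Lemma~\ref{samestab}, yields that $\widetilde{\Phi}$ is a bijection and in fact a homeomorphism (both sections are continuous).

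Next I would check $(K\times K)$-equivariance and preservation of both momentum maps. Equivariance under the $\mathcal{L}$-factor is built into the formula: $\widetilde{\Phi}(\mathcal{L}_{k'}(\mathcal{L}_k s_{(T^*K)^{\sc}}(\nu))) = \mathcal{L}_{k'k} s_{S_G}(\nu,\nu)$. Equivariance under the $\mathcal{R}$-factor rests on the fact that both sections are $K$-equivariant sections of $\mu_{\mathcal{R}}$ derived from the polar decomposition (as recalled in Section~\ref{sympG} and preceding the statement of Lemma~\ref{2incarn}); acting by $h$ on the right translates $\nu$ to $h\nu$ in either picture. Since the image of a point under $\mu_{\mathcal{R}}$ in either space is exactly the ``$\nu$'' used to construct the section, $\widetilde{\Phi}$ intertwines $\mu_{\mathcal{R}}$; the same applies to $\mu_{\mathcal{L}}$ by translating $k$.

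Finally, to upgrade $\widetilde{\Phi}$ to a symplectomorphism I would invoke the uniqueness principle from \cite[Appendix A]{MT} already used in Section~\ref{sympG}: a $(K\times K)$-equivariant homeomorphism between two Hamiltonian $(K\times K)$-spaces that intertwines both momentum maps and respects the Lagrangian ``$K$-fiber'' projection coming from polar decomposition is forced to be a symplectomorphism. On the top stratum (where the stabilizers are minimal, i.e.\ over $\nu$ in the principal face) both $(T^*K)^{\sc}$ and $\As(G)$ are smooth symplectic $(K\times K)$-manifolds, and the hypotheses of the uniqueness result apply directly, giving a smooth symplectomorphism. The remaining work is then to extend across strata: each stratum of $\As(G)$ corresponds to an essential face in Vinberg's classification, and by Lemma~\ref{samestab} matches a stratum of $(T^*K)^{\sc}$ with identical stabilizer; the same uniqueness argument, applied to each $(K\times K)$-orbit type, gives a symplectomorphism on each stratum, and continuity of $\widetilde{\Phi}$ across the stratification (ensured by continuity of both sections) assembles these into a symplectomorphism of stratified symplectic spaces.

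The main obstacle is the last step: the uniqueness principle of \cite[Appendix A]{MT} is stated for smooth Hamiltonian manifolds, so the delicate point is to verify that it applies stratum-by-stratum and that the pieces glue continuously. I expect this to go through cleanly because the stratifications on both sides are indexed by the same combinatorial data (essential faces / faces of $\Delta$), and the sections $s_{(T^*K)^{\sc}}$, $s_{S_G}$ are continuous globally and smooth on each stratum, but a careful check of smoothness of the identification along a fixed stratum is where the actual work lies.
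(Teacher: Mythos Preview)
Your treatment of well-definedness, bijectivity, and $(K\times K)$-equivariance matches the paper's proof (the paper obtains $\mathcal{R}$-equivariance via the involution $\iota$ rather than via the sections, but this is cosmetic).

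The symplectomorphism step, however, diverges from the paper and contains a gap. The result you cite from \cite[Appendix~A]{MT} is a statement about identifying a symplectic structure on the \emph{group} $G$ with an open subset of $T^*K$; it is not formulated as a uniqueness theorem for maps between general Hamiltonian $(K\times K)$-spaces, so your invocation of it for $(T^*K)^{\sc}$ and $\As(G)$ does not apply as stated. To make your argument go through you would have to show that on each stratum both spaces embed symplectically into $T^*K$ via their respective sections---and checking this for $\As(G)$ amounts precisely to knowing that the gradient-Hamiltonian flow is symplectic, which is the ingredient the paper uses directly.

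The paper's argument is both shorter and avoids this detour. It observes that by continuity of $\widetilde{\Phi}$ and density of the top stratum it suffices to verify the symplectic property over $\nu$ in the open chamber; there is no need to work stratum-by-stratum or to worry about gluing. On that top stratum, $\widetilde{\Phi}$ factors (by construction, via the section formulas \eqref{Phithroughsection} and \eqref{Phif}) as $\Phi_f \circ \Phi_{T^*K}^{-1}$, where $\Phi_{T^*K}$ is a symplectomorphism onto its image by Proposition~\ref{Xsymp}, and $\Phi_f$ is a symplectomorphism because it is the time-$1$ flow of a gradient-Hamiltonian vector field. This gives the result immediately, with the lower strata handled for free by continuity.
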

\begin{proof} By Lemma \ref{samestab} $\widetilde{\Phi}$ is well-defined and a homeomorphism (in fact a diffeomorphism on each stratum).  It is also easily seen to be $(K\times K)$-equivariant: for the $\mathcal{L}$-action this follows by construction, and using the involution $\iota$ the same holds for $\mathcal{R}$.  To show it is a symplectomorphism, by continuity it suffices to show this on $\mathcal{L}_K s_{(T^*K)^{\sc}}(\Delta^{\vee})^o$.  It holds there since on the one hand this is symplectomorphic to the corresponding subset of $T^*K$ by  Proposition \ref{Xsymp}, and on the other hand the flow of gradient-Hamiltonian vector fields yields symplectomorphisms.
\end{proof}

\begin{corollary}
With the identification of Lemma \ref{2incarn} in mind, the contraction map of Corollary \ref{flowresult} is given by the map $\Phi$ from Section \ref{defcontr}.
\end{corollary}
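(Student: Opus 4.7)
The plan is to exploit the rigidity forced by $(K\times K)$-equivariance. Both maps under consideration --- the composition $\widetilde{\Phi}\circ \Phi_{T^*K}$ and the flow contraction $\Phi_f$ --- are continuous $(K\times K)$-equivariant maps from $T^*K\cong G$ to $\As(G)$ that intertwine the $\mathcal{R}$-momentum map. Since every point of $G$ has the form $\mathcal{L}_k s_{T^*K}(\chi)$, it suffices to check equality on the image of the section $s_{T^*K}$ and then to propagate the identity by $\mathcal{L}$-equivariance.

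On the section I would unwind both sides using the formulas already in hand. Equation (\ref{Phithroughsection}) gives $\Phi_{T^*K}(s_{T^*K}(\chi)) = s_{(T^*K)^{\sc}}(\chi)$, and the definition of $\widetilde{\Phi}$ from Lemma \ref{2incarn} immediately returns $s_{S_G}(\chi,\chi)\in\As(G)$. On the other side, formula (\ref{Phif}) reads $\Phi_f(s_{T^*K}(\chi)) = s_{S_G}(\chi - \sum_i m_i\alpha_i)$, where the decomposition $\chi = \tilde{k}\cdot(\mu,\mu+\sum_i m_i\alpha_i)$ records the position of $\chi$ inside $\Kenh\cdot Q_G$. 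I would then argue that $s_{S_G}(\chi,\chi)$ and $s_{S_G}(\chi-\sum_i m_i\alpha_i)$ represent the same element of $\As(G)$, since both are obtained by applying the canonical polar-decomposition section $s_{S_G}$ to representatives of a single $\Kenh$-coadjoint orbit lying in $\mu_{\mathcal{R},\Kenh}(\As(G))$.

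The main obstacle is reconciling the two momentum-map bookkeepings in play: the value $\chi\in\mathfrak{k}^*$ parametrizing the section of $(T^*K)^{\sc}$ against the pair $(\mu,\mu+\sum_i m_i\alpha_i)\in\mathfrak{k}_{\Kenh}^*$ parametrizing the $\Kenh$-momentum image of a general $g\in G\subset S_G$. Handling this cleanly requires leaning on the stabilizer calculation from Lemma \ref{samestab}, which guarantees that $(T^*K)^{\sc}$ and $\As(G)$ share the same orbit-type stratification along the momentum map, together with the $\Kenh$-equivariance of $s_{S_G}$ to identify the two coadjoint orbits. Once this matching is in place, applying $\mathcal{L}_k$-equivariance on both sides promotes agreement from the slice $s_{T^*K}(\mathfrak{k}^*)$ to all of $G\cong T^*K$, which is what is claimed.
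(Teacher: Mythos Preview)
Your overall strategy is correct and matches the paper's (implicit) argument: the corollary is meant to follow immediately by comparing the formulas (\ref{Phithroughsection}), (\ref{Phif}), and the definition of $\widetilde{\Phi}$, using $(K\times K)$-equivariance to reduce to the image of the section.

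There is, however, a genuine slip in the key step. You arrive at the two expressions $s_{S_G}(\chi,\chi)$ and $s_{S_G}(\chi-\sum_i m_i\alpha_i)$ and propose to identify them by saying that their inputs are ``representatives of a single $\Kenh$-coadjoint orbit'', then invoking Lemma \ref{samestab} and equivariance of $s_{S_G}$. This does not work: $s_{S_G}$ is a \emph{section} of the momentum map, hence injective, so applying it to two distinct points of the same coadjoint orbit yields two distinct points of $S_G$. The stabilizer calculation tells you nothing here, and $s_{S_G}$ is not $\Kenh$-equivariant in the sense you would need.

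The correct observation is much simpler and purely notational. In (\ref{Phif}) the symbol $\chi$ lives in $\mathfrak{k}_{\Kenh}^*$ and decomposes as $\chi=\tilde{k}.(\mu,\mu+\sum_i m_i\alpha_i)$, whose $\mathfrak{k}^*$-component is $\nu:=\tilde{k}.\mu$; this $\nu$ is the parameter appearing in (\ref{Phithroughsection}) and in the definition of $\widetilde{\Phi}$. The paper's shorthand $(\nu,\nu)$ stands for the element $\tilde{k}.(\mu,\mu)\in\mathfrak{k}_{\Kenh}^*$, i.e.\ the point of $\mu_{\mathcal{R},\Kenh}(\As(G))$ sitting over $\nu$. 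Subtracting $\sum_i m_i\alpha_i$ from the second component of $\chi$ gives precisely $\tilde{k}.(\mu,\mu)$. Hence the two inputs to $s_{S_G}$ are \emph{literally the same element} of $\mathfrak{k}_{\Kenh}^*$, not merely in the same orbit, and the equality $\widetilde{\Phi}\circ\Phi_{T^*K}=\Phi_f$ follows on the nose. Once you make this identification explicit, no further work is required.
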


\begin{remark}\label{noneedforGJS6}
As discussed above, symplectically the asymptotic semigroup $\As(G)$ of $G$ can be understood as $(T^*K)^{\sc}$.  In \cite[Section 6]{GJS} it is shown that  $E_{\mathcal{R}}(T^*K)$ can be equipped with a K\"ahler structure (and identified with $G\q U$) by embedding $G/U$ into a sum of irreducible representations $\bigoplus M_{\rho_i}$ of $G$ or $K$ (strictly speaking this is only shown for simply-connected, semi-simple $G$, but the results can be shown to hold for arbitrary linearly reductive $G$, see Appendix \ref{appendix}).   A similar result holds for $E_{\mathcal{L}}(T^*K)$.  The $\mathbb{T}$ that we factor out here acts on each $M_{\rho_i}$ through a single character.  It is now tempting to embed $(T^*K)^{\sc}=(E_{\mathcal{L}}(T^*K)\times E_{\mathcal{L}}(T^*K))\q_{\! 0} \mathbb{T}$ into $\bigoplus_i M_{\rho_i}\otimes M_{\rho_i}^*$ by mapping $((v_i)_i, (w_j)_j)$, with $v_i \in M_{\rho_i}$ and $w_j\in M_{\rho_j}^*$, to $(v_i\otimes w_i)_i$.  This is not what happens though, since for a Hermitian vector space $M$ the map $(M\times M^*)\q \U(1)\rightarrow M\otimes M^*: [(v,w)]\mapsto v\otimes w$ is not symplectic when using the linear K\"ahler structure on $M\times M^*$ as in (\ref{formEnd}).
\end{remark}

\section{Transfer}
Given a $G$-variety $X$ (which we shall assume to be semi-projective, such that the $G$-action is linearized), and a one-parameter sub-group, one can take the horospherical contraction of $X$, as discussed in Section \ref{horcontr}.  We begin by observing that by combining (\ref{generalhorcontr}) and Proposition \ref{basechange} this horospherical contraction is induced from the degeneration $(X\times S_G)\q G\rightarrow \A_G$ by base change:
\begin{center}
\begin{tikzcd}
 \A^1\times_{\A_G} \left(X\times S_G\right)\q G\ar{d}\ar{r} & \left(X\times S_G\right)\q G \ar{d} \\ \A^1 \ar{r} & \A_G
\end{tikzcd}
\end{center}
Note now that the gradient-Hamiltonian vector field $V_{\pi}$ associated with $\left(X\times S_G\right)\q G \times_{\A_G} \A^1\rightarrow \mathbb{C}$ is just the descent of $V_{\pi}$ on $\mathbb{C}\times_{\A_G} S_G$ restricted to the level-set of the momentum map under the quotient by $K$.

With this we can state our main result of this section:
\begin{theorem}\label{lastref}
Given any semi-projective $G$-variety $X$, which we equip with a K\"ahler structure through a $G$-linearized embedding $X\hookrightarrow \A^N\times \mathbb{P}^M$ for some $N$ and $M$, we have that the flow at time $1$ of the gradient-Hamiltonian vector field $V_{\pi}$ given by the horospherical contraction of $X$ extends to give the contraction map $\Phi_X$.
\end{theorem}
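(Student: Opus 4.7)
The plan is to realize the horospherical degeneration $\pi:\mathcal{X}\to \A^1$ of $X$ as a symplectic reduction of the product family over the degeneration of $G$ itself, and then descend the gradient-Hamiltonian flow through this reduction, reducing the assertion to Corollary~\ref{flowresult}.

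First, by Proposition~\ref{basechange} combined with (\ref{generalhorcontr}), the total space of the horospherical degeneration of $X$ is canonically identified with the $G$-reduction
\begin{equation*}
\mathcal{X} \;\cong\; \bigl(X \times (\A^1 \times_{\A_G} S_G)\bigr) \q G,
\end{equation*}
where $G$ acts diagonally, using the $\mathcal{L}$-action on $S_G$, and $\pi$ is induced by the projection to $\A^1$. I would first verify that the K\"ahler structure on $\mathcal{X}$ inherited from a $G$-linearized embedding of $X$ in $\A^N \times \mathbb{P}^M$, combined with the matrix embedding (\ref{embedding}) of $S_G$, coincides with the reduced K\"ahler structure from the product $X \times (\A^1\times_{\A_G} S_G)$. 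This is a direct check on compatibility of Hermitian products under the relevant tensor and quotient constructions.

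Second, I would analyze the gradient-Hamiltonian vector field on the product. Let $\tilde{\pi}$ denote the composite $X \times (\A^1\times_{\A_G} S_G) \to \A^1 \times_{\A_G} S_G \xrightarrow{\pi_h} \A^1$. Since $\tilde{\pi}$ is independent of the $X$-factor, $V_{\tilde{\pi}} = (0, V_{\pi_h})$, where $V_{\pi_h}$ is the vector field studied in Section~\ref{gradientflowmonoid}. By Lemma~\ref{eerste} and triviality in the $X$-direction, $V_{\tilde{\pi}}$ is invariant under the diagonal $K$-action and tangent to the zero level of its moment map, so it descends across the reduction to $V_\pi$ on $\mathcal{X}$.

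Third, I would apply Corollary~\ref{flowresult} together with Lemma~\ref{2incarn}: the time-$1$ flow of $V_{\pi_h}$ extends continuously to the map $\Phi_f: G \to \As(G)$, which after the identification of $\As(G)$ with $(T^*K)^{\sc}$ is exactly $\Phi_{T^*K}$. Consequently, the time-$1$ flow of $V_{\tilde{\pi}}$ on the product extends to $\operatorname{id}_X \times \Phi_{T^*K}: X\times T^*K \to X\times (T^*K)^{\sc}$. Passing to the symplectic reduction by the diagonal $K$ then yields a continuous extension of the time-$1$ flow of $V_\pi$ to
\begin{equation*}
\bigl(X \times T^*K\bigr)\q_{\!0} K \;\longrightarrow\; \bigl(X \times (T^*K)^{\sc}\bigr)\q_{\!0} K,
\end{equation*}
which, by the standard isomorphism $X \cong (X\times T^*K)\q_{\!0} K$ together with the definition (\ref{PhiX}), is precisely the map $\Phi_X$ of Proposition~\ref{extension}.

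The main obstacle is the compatibility of the gradient-Hamiltonian flow with K\"ahler reduction in this stratified setting, since both $\mathcal{X}$ and $\A^1\times_{\A_G} S_G$ can be singular. This should be handled by observing that $V_{\pi_h}$ is $G\times G$-equivariant and preserves the orbit-type stratification of $S_G$ inherited from the $\Genh\times\Genh$-action, so on each stratum the reduction is a smooth K\"ahler reduction and the descent of the flow is classical; continuity across strata then follows from the properness of $\Phi_{T^*K}$ established in Proposition~\ref{Gcontract} together with the properness of the GIT quotient map. A secondary point to verify is that the realization (\ref{generalhorcontr}) is valid at the K\"ahler level on the total space, i.e.\ that the relative polarization of $\mathcal{X}\to\A^1$ is compatible with the Hermitian structure on the fibers of $X\times(\A^1\times_{\A_G} S_G)\to\A^1$; this follows from the $G$-equivariance of the chosen embeddings and standard facts on K\"ahler reduction of projective-over-affine varieties.
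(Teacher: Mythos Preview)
Your proposal is correct and follows essentially the same approach as the paper's proof: realize the family $\mathcal{X}$ as the $G$-quotient of $X\times(\A^1\times_{\A_G}S_G)$, observe that the gradient-Hamiltonian vector field is $(0,V_{\pi_h})$ and hence $K$-invariant, then descend $\Id\times\Phi_{T^*K}$ to obtain $\Phi_X$ via the identification (\ref{PhiX}). The paper's version is terser---it phrases the identification $X\cong(X\times T^*K)\q_{\! 0} K$ via the section $x\mapsto(x,s(\mu(x)))$ rather than invoking the universal property directly, and it does not spell out the stratified-reduction and K\"ahler-compatibility points you flag---but the substance is the same.
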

\begin{proof}
Begin by noting that we can symplectically embed $X$ into $X\times T^*K$ (and hence, by identifying $T^*K\cong G\cong \pi_G^{-1}(1)$ into $S_G$ or $\A^1\times_{\A_G}S_G$) by mapping $x\mapsto (x, s(\mu(m)))$, where $\mu$ is the momentum map for the action of $K$ on $X$, and as before $s$ is the preferred section for the momentum map for the $\mathcal{R}$-action of $K$ on $T^*K$.  It suffices now to remark that since the flow of $V_{\pi}$ on $\A^1\times_{\A_G}S_G$ (and hence on the non-singular locus of $\A^1\times_{\A_G}\left(X\times S_G\right)$) is invariant under the action of $K$, the induced contraction map $\Id\times \Phi:X\times G\rightarrow X\times \As(G)$ descends and indeed gives the contraction map for $V_{\pi}$.  By construction, cfr. (\ref{PhiX}), this coincides with $\Phi_X$. 
\end{proof}

\section{Application to branching problems}\label{branching}

Let $K$ and $L$ be connected, compact Lie groups with complex groups $K_{\C} = G$ and $L_{\C} = H$.  For any Lie group homomorphism $\phi: L \to K$, one can regard a complex $K$ representation $M$ as an $L$ representation by having $L$ act through $\phi$; this construction defines a pullback functor $\phi^*: Rep(K) \to Rep(L)$ on categories of finite-dimensional complex representations.  Recall that the categories $Rep(K)$ and $Rep(L)$ are semi-simple so the functor $\phi^*$ is determined by its values on the irreducible representations  $M_{\lambda}$ for $\lambda \in \mathfrak{X}_K^+$:

\begin{equation}
\phi^*(M_{\lambda}) = \bigoplus_{\eta \in \mathfrak{X}^+_L} \Hom_L(M_{\eta}, \phi^*(M_{\lambda}))\otimes M_{\eta}.\\
\end{equation}

\noindent
The branching problem associated to $\phi$ is the question of determining the dimensions of the multiplicity spaces $\Hom_L(M_{\eta}, \phi^*(M_{\lambda}))$ for all $\eta \in \mathfrak{X}^+_L$ and $\lambda \in \mathfrak{X}^+_K$. A coarser problem is to determine the set $P(\phi) \subset \big(\mathfrak{X}_L^+\times \mathfrak{X}_K^+\big)$ of dominant weights such that $\Hom_H(M_{\eta}, \phi^*(M_{\lambda})) \neq 0.$ 

\begin{example}[The Clebsch-Gordan rule]
The Clebsch-Gordan rule is perhaps the most elementary branching rule for a non-commutative Lie group, as it determines the branching law associated to the diagonal inclusion map $\delta: \SU(2) \to \SU(2) \times \SU(2)$.  Recall that the finite dimensional irreducible complex representations of $\SU(2)$ are indexed by positive integers, where $M_n, n \in Z_{\geq 0}$ is the vector space $\Sym^n(\C^2)$ with the symmetric action induced from $\C^2.$  The Clebsch-Gordan rule states that that $M_j\otimes M_k$ is multiplicity free and the multiplicity space 
$\Hom_{\SU(2)}(M_i, \delta^*(M_j \otimes M_k))$ is non-trivial 
 
 when $i + j + k \in 2\Z$ and $|i-j| \leq k \leq i + j$.  This latter condition can be recognized as the condition that $i, j, k$ must be the side-lengths of a triangle.  The set of $i, j, k$ satisfying these two properties therefore constitutes $P(\delta)$.
\end{example}

\begin{example}[The Pieri rule]
The fact that diagonal branching is  
multiplicity free significantly simplifies the $\SU(2)$ diagonal branching problem.  This feature also holds for the branching rule associated to the upper left diagonal inclusion $i_{n-1}:\U(n-1) \to \U(n)$, where the associated branching law is known as the Pieri rule (see \cite[Exercise 6.12]{FH}).  Recall that finite dimensional irreducible representations $M_{\lambda}$ of a unitary group $\U(k)$ are in bijection with weakly decreasing $k$-tuples of integers $\lambda$: $\lambda_1 \geq \ldots \geq \lambda_k.$  The Pieri rule states that 
 $\Hom_{\U(n-1)}(M_{\eta}, i_{n-1}^*(M_{\lambda}))$ is multiplicity free, and has dimension $1$ precisely when the entries of $\eta$ and $\lambda$ interlace: $\lambda_1 \geq \eta_1 \geq \lambda_2 \geq \ldots \geq \lambda_{n-1}\geq \eta_{n-1} \geq \lambda_n$.  The entries of $\eta$ and $\lambda$ then fit into an interlacing pattern, see Figure \ref{interlace}. The set $P(i_{n-1})$ can then be identified with the set of all interlacing patterns with top row length $n$.
\begin{figure}[H]
\begin{tikzpicture}
\draw [->] (-1,1.2) -- (-1.8, 0);
\draw [->] (-2, 0) -- (-2.8, 1.2);
\draw [->] (-3,1.2) -- (-3.8, 0);
\draw [->] (-4, 0) -- (-4.8, 1.2);
\draw [->] (-5,1.2) -- (-5.8, 0);
\draw [->] (-6, 0) -- (-6.8, 1.2);
\draw [->] (-7,1.2) -- (-7.8, 0);
\draw [->] (-8, 0) -- (-8.8, 1.2);
\node at (-1.9, -.2) {$\eta_4$};
\node at (-3.9, -.2) {$\eta_3$};
\node at (-5.9, -.2) {$\eta_2$};
\node at (-7.9, -.2) {$\eta_1$};
\node at (-.9, 1.4) {$\lambda_5$};
\node at (-2.9, 1.4) {$\lambda_4$};
\node at (-4.9, 1.4) {$\lambda_3$};
\node at (-6.9, 1.4) {$\lambda_2$};
\node at (-8.9, 1.4) {$\lambda_1$};
\end{tikzpicture}
\caption{An interlacing diagram.}
\label{interlace}
\end{figure}
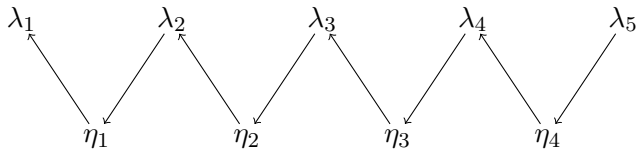
\end{example}

In this section we review how the multiplicity spaces $\Hom_L(M_{\eta}, \phi^*(M_{\lambda}))$ can be studied using algebraic varieties called branching varieties, and we apply our results on the Vinberg monoid to these spaces.  We direct the reader to the work of Howe, Tan, and Willenbring \cite{HTW} on the combinatorial commutative algebra of branching varieties (also see \cite{M14} and \cite{M}). The symplectic geometry of branching varieties has been studied  by Berenstein and Sjamaar, \cite{BSj}.

\subsection{Affine branching varieties}

As above, we fix $L, K$ to be compact connected Lie groups with associated complex groups $H, G$, respectively.  Let $\phi: L \to K$ be a map of compact Lie groups which lifts to a map on the associated complex groups, also denoted $\phi$.  Recall that the quotient $H \q U_H \times G \q U_G$ carries an algebraic  $H\times T_H \times G \times T_G$ action, and its coordinate ring has the isotypical  decomposition: $\C[H\q U_H \times G\q U_G] = \bigoplus_{(\eta,\lambda) \in \mathfrak{X}^+_H \times \mathfrak{X}^+_G} M_{\eta} \otimes M_{\lambda}$.  

\begin{definition}
The affine branching variety $X(\phi)$ is the left diagonal GIT-quotient of the product  $H\q U_H \times G\q U_G$ by the group $H$ taken with respect to the left action on $H \q U_H$ and the left action through $\phi$ on $G \q U_G$:

\begin{equation}
X(\phi ) = H \qlb \big( H\q U_H \times G\q U_G \big) .
\end{equation}

\end{definition}

The variety $X(\phi)$ has a residual right action by the torus $T_H \times T_G$, and the corresponding isotypical components of $\C[X(\phi)]$ are the invariant spaces $[M_{\eta}\otimes M_{\lambda}]^H$ (from now on we drop the $\phi^*$ where no confusion will result).  Using the natural isomorphism $[M_{\eta}\otimes M_{\lambda}]^H \cong \Hom_H(M_{\eta}^*, M_{\lambda})$, we can rewrite this decomposition:

\begin{equation}
\C[X(\phi)] = \bigoplus_{\eta \in \mathfrak{X}^+_H, \lambda \in \mathfrak{X}^+_G} \Hom_H(M_{\eta}, M_{\lambda}).\\
\end{equation}

\noindent
In particular, the set of $T_H \times T_G$ characters supported in this decomposition coincides with $P(\phi)$, which allows us to draw several conclusions about $P(\phi)$. First, we see that as the graded support of a graded algebra, $P(\phi)$ is a submonoid of $\mathfrak{X}^+_H\times \mathfrak{X}^+_G$.   We let $\Delta(\phi)$ be the convex hull of $P(\phi)$ -- this is a cone in the product $\Delta_H \times \Delta_G$ which comes equipped with two projections $\pi_G: P(\phi) \to \Delta_G$, $\pi_H: P(\phi) \to \Delta_H$.    Both $H \q U_H$ and $G \q U_G$ are affine varieties of finite type and $H$ is reductive, so it follows that $X(\phi)$ is of finite type and $\C[X(\phi)]$ is finitely generated.  As a consequence, $P(\phi)$ is a finitely generated monoid and $\Delta(\phi)$ is a polyhedral cone.  Furthermore, any rational point $(\eta, \lambda) \in \Delta(\phi)$ (with respect to the $H\times G$ weight lattice) can be scaled to give a member of $P(\phi)$, so $\Hom_H(M_{k\eta}, M_{k\lambda}) \neq 0$ for sufficiently large integers $k$.

\subsection{Projective branching varieties}

Let $P \subset H$ and $Q \subset G$ be parabolic subgroups containing $T_H, T_G$ and $U_H, U_G$ respectively.  The quotients $H/P$ and $G/Q$ are projective flag varieties of $H$ and $G$.  A dominant weight $\eta \in \mathfrak{X}^+_H$ corresponds to a character $\chi_{\eta}: T_H \to \C^*$; if this character lifts to a character of $P$, it defines a corresponding line bundle $\mathbf{L}_{\eta}$ on $H/P$.  Recall that by the Borel-Bott-Weil theorem, the global sections of this line bundle can be identified with $M_{\eta}$ as an $H$-representation.   

\begin{definition}
The projective branching variety $X_{\eta, \lambda}(\phi)$ with line bundle $\mathbf{L}_{\eta, \lambda}$ is defined to be the GIT-quotient of $H/P\times G/Q$ with respect to the left diagonal $H$-action and the $H$-action on the line bundle $\mathbf{L}_{\eta} \boxtimes \mathbf{L}_{\lambda}.$ 
\end{definition}

The character $\chi_{\eta}$ defines a linearization of the action of $T_H$ on $H \q U_H$ equipped with  the trivial line bundle.  In this way the pair $(H/P, \mathbf{L}_{\eta})$ can also be recovered by way of a projective GIT-quotient of the affine variety $H \q U_H$ by $T_H$.    

\begin{proposition}
The projective variety $X_{\eta, \lambda}(\phi)$ is the GIT-quotient of the affine branching variety $X(\phi)$ with respect to the $T_H \times T_G$-linearization of the trivial line bundle of $X(\phi)$ given by the product character $\chi_{\eta, \lambda}$ of $\chi_{\eta}$ and $\chi_{\lambda}$. 
\end{proposition}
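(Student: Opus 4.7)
The plan is to compute the homogeneous coordinate rings of both sides and show they are isomorphic as graded algebras, which then identifies the two $\mathrm{Proj}$s. The essential point is that taking $H$-invariants (via $\phi$) and extracting $\chi_{\eta,\lambda}^n$-isotypic components commute, because the diagonal $H$-action on $H\q U_H \times G\q U_G$ and the $T_H \times T_G$-action on $X(\phi)$ commute (the former coming from the left/$\mathcal{L}$-actions of $H$ on $H\q U_H$ and $G\q U_G$, the latter from the residual right/$\mathcal{R}$-actions).

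First I would recall that the projective flag variety together with its line bundle can itself be realized as a GIT-quotient: namely, $(H/P, \mathbf{L}_\eta) = H\q U_H \q_{\chi_\eta}T_H$ in the sense that its homogeneous coordinate ring is the $\chi_\eta$-graded piece
\begin{equation*}
\bigoplus_{n\geq 0} H^0(H/P, \mathbf{L}_\eta^{\otimes n}) \;=\; \bigoplus_{n\geq 0} M_{n\eta},
\end{equation*}
and similarly for $(G/Q, \mathbf{L}_\lambda)$. Taking the external product, the homogeneous coordinate ring of $H/P\times G/Q$ with respect to $\mathbf{L}_\eta \boxtimes \mathbf{L}_\lambda$ is $\bigoplus_{n\geq 0}M_{n\eta}\otimes M_{n\lambda}$, on which the diagonal $H$-action (via $\phi$ on the second factor) is the one defining the GIT-quotient producing $X_{\eta,\lambda}(\phi)$.

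Next I would take $H$-invariants. Since $H$ is reductive and acts on each graded piece as a representation, this gives
\begin{equation*}
\bigoplus_{n\geq 0} H^0(H/P\times G/Q, (\mathbf{L}_\eta\boxtimes \mathbf{L}_\lambda)^{\otimes n})^{H} \;=\; \bigoplus_{n\geq 0} [M_{n\eta}\otimes M_{n\lambda}]^{H},
\end{equation*}
and the left side is, by definition, the homogeneous coordinate ring of $X_{\eta,\lambda}(\phi)$.

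On the other hand, the coordinate ring $\C[X(\phi)] = \bigoplus_{\eta',\lambda'}[M_{\eta'}\otimes M_{\lambda'}]^H$ carries the residual $T_H\times T_G$-action which acts on $[M_{\eta'}\otimes M_{\lambda'}]^H$ via the character $\chi_{\eta',\lambda'}$. Hence the $\chi_{\eta,\lambda}$-semi-invariant sections in degree $n$ of the trivial bundle (linearized by the character $\chi_{\eta,\lambda}$) are precisely the isotypic pieces with $(\eta',\lambda') = (n\eta, n\lambda)$, giving
\begin{equation*}
\bigoplus_{n\geq 0}\C[X(\phi)]_{n\chi_{\eta,\lambda}} \;=\; \bigoplus_{n\geq 0}[M_{n\eta}\otimes M_{n\lambda}]^{H},
\end{equation*}
which is the homogeneous coordinate ring of $X(\phi)\q_{\chi_{\eta,\lambda}}(T_H\times T_G)$. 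Since both homogeneous coordinate rings agree as graded $\C$-algebras and both ample line bundles are recovered as $\mathcal{O}(1)$ on the respective $\mathrm{Proj}$, the two projective varieties are canonically isomorphic.

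The only real subtlety I anticipate is bookkeeping for the linearizations: one must verify that the $H$-linearization on $\mathbf{L}_\eta\boxtimes \mathbf{L}_\lambda$ (coming from the natural equivariant structure on each $\mathbf{L}_\eta$) is compatible with the $H$-linearization implicit in $X(\phi) = H\qlb(H\q U_H \times G\q U_G)$; this is essentially the statement that the ``reduction in stages'' for the commuting group actions $H$ and $T_H\times T_G$ holds, which is standard because the $H$-action is on the left via $\mathcal{L}$ while $T_H\times T_G$ acts on the right via $\mathcal{R}$. Once this compatibility is recorded, the two graded-ring computations above match on the nose and the result follows.
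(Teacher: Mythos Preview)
Your proposal is correct and matches the approach the paper has in mind: the paper states this proposition without a formal proof, but the remark immediately preceding it---that $(H/P,\mathbf{L}_\eta)$ is recovered as the projective GIT-quotient $H\q U_H \q_{\chi_\eta} T_H$---is exactly the seed of your argument, and your homogeneous-coordinate-ring computation with reduction in stages for the commuting $H$ and $T_H\times T_G$ actions fills in precisely the details the paper leaves implicit.
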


\subsection{Branching varieties as reductions}

Now we replace $H\q U_H, G\q U_G$ with their symplectic analogues $E_{\mathcal{R}}(T^*L), E_{\mathcal{R}}(T^*K)$, and realize branching varieties as symplectic reductions.  We give the imploded cotangent bundle its natural symplectic structure coming from the cotangent bundle, and we give the flag varieties $H/P$ and $G/Q$ their Kostant-Kirillov symplectic forms as the coadjoint orbits $\mathcal{O}_{-\eta}, \mathcal{O}_{-\lambda}$.  

\begin {proposition}\label{affine-branching-reduction}
The affine branching variety $X(\phi)$ can be identified with the left diagonal reduction of $E_{\mathcal{R}}(T^*L) \times E_{\mathcal{R}}(T^*K)$ by $L$ at level $0$: 

\begin{equation}
X(\phi) = L _{\ 0}{\!\qlb}{} \big(E_{\mathcal{R}}(T^*L) \times E_{\mathcal{R}}(T^*K)\big).
\end{equation}

For any $(\eta, \lambda) \in \mathfrak{X}^+_H\times \mathfrak{X}^+_G,$ the projective branching variety $X_{\eta, \lambda}(\phi)$ can be identified with the simultaneous reduction of $E_{\mathcal{R}}(T^*L) \times E_{\mathcal{R}}(T^*K)$ with respect to the left diagonal action of $L$ at level $0$ and the right action by $\mathbb{T}_L\times \mathbb{T}_K$ at level $(\eta, \lambda) \in \Delta_L\times \Delta_K$:  

\begin{equation}
X_{\eta, \lambda}(\phi) = L _{\ 0}{\!\qlb}{} \big(E_{\mathcal{R}}(T^*L) \times E_{\mathcal{R}}(T^*K)\big) \qb_{\!\!(\eta, \lambda)} \mathbb{T}_L \times \mathbb{T}_K.\\
\end{equation}

\end{proposition}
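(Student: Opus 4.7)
The plan is to translate the algebraic definitions of $X(\phi)$ and $X_{\eta,\lambda}(\phi)$ into symplectic language using two ingredients: the identification, recalled in the remark in Section~\ref{prelim}, of the imploded cotangent bundle with the basic affine space as a K\"ahler variety, together with the GIT/symplectic reduction dictionary also recalled in Section~\ref{prelim}.

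First I would invoke the isomorphism of K\"ahler Hamiltonian $(K\times\mathbb{T})$-spaces $E_{\mathcal{R}}(T^*K) \cong G\q U_G$, together with the analogous $E_{\mathcal{R}}(T^*L)\cong H\q U_H$. Under these, the residual $\mathcal{L}$-action of $K$ (resp.\ $L$) matches the algebraic left $G$- (resp.\ $H$-)action, and the residual right Hamiltonian $\mathbb{T}$-actions match the algebraic right $T_G$- and $T_H$-actions. Pulling back along $\phi$ further identifies the Hamiltonian $L$-action on $E_{\mathcal{R}}(T^*K)$ with the induced algebraic $H$-action on $G\q U_G$. The product $E_{\mathcal{R}}(T^*L)\times E_{\mathcal{R}}(T^*K)$ thereby becomes the affine $H$-variety $H\q U_H\times G\q U_G$ equipped with its diagonal left $H$-action.

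For the affine statement, $L _{\ 0}{\!\qlb}{}\bigl(E_{\mathcal{R}}(T^*L)\times E_{\mathcal{R}}(T^*K)\bigr)$ then becomes the symplectic reduction at level $0$ of a product of two affine $H$-varieties. The momentum map is admissible, because the $H$-action is linearized through the faithful representation realizing both factors as closed subvarieties of sums of irreducible representations of $H$; hence the Kempf-Ness/Sjamaar correspondence identifies the stratified reduction $\mu^{-1}(0)/L$ with the algebraic GIT quotient $H\qlb(H\q U_H\times G\q U_G)=X(\phi)$, yielding the first identity. For the projective statement I would carry out the reduction in stages: first by the diagonal $L$ at level $0$ to recover $X(\phi)$ as an affine Hamiltonian $(\mathbb{T}_L\times\mathbb{T}_K)$-space, and then by $\mathbb{T}_L\times\mathbb{T}_K$ at level $(\eta,\lambda)$. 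Because $\mathbb{T}_L\times\mathbb{T}_K$ is abelian, the shifting trick and the standard dictionary for torus linearizations identify this second reduction with the GIT quotient of $X(\phi)$ by $T_H\times T_G$ with respect to the trivial line bundle twisted by the character $\chi_{\eta,\lambda}=\chi_\eta\chi_\lambda$; the proposition immediately preceding the statement then identifies the result with $X_{\eta,\lambda}(\phi)$.

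The main technical point is verifying admissibility of the relevant momentum maps so that the Kempf-Ness dictionary applies to the singular affine varieties $H\q U_H$ and $G\q U_G$; this reduces to the observation that $\C[H\q U_H]=\bigoplus_\eta M_\eta$ and $\C[G\q U_G]=\bigoplus_\lambda M_\lambda$ provide the required faithful linearizations, after which the stratified symplectic reduction theory of \cite{SjL} recalled in Section~\ref{hamiltonianspaces} handles the singularities produced by implosion.
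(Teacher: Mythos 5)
Your proof is correct and follows essentially the same route as the paper's: both arguments reduce to the Kempf--Ness correspondence and Sjamaar--Lerman stratified reduction, applied after observing that $E_{\mathcal{R}}(T^*L)\times E_{\mathcal{R}}(T^*K)$ is a Hamiltonian $(L\times K\times\mathbb{T}_L\times\mathbb{T}_K)$-space realized as an algebraic variety sitting inside a Hermitian vector space. You merely unpack what the paper leaves implicit (the identification $E_{\mathcal{R}}(T^*K)\cong G\q U_G$, reduction in stages, and the admissibility check), but the key ideas and the citations are identical.
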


\begin{proof}
In both cases we begin with a Hamiltonian $(L \times K \times \mathbb{T}_L \times \mathbb{T}_K)$-structure on an algebraic subvariety of a complex vector space, equipped with an invariant Hermitian form, so this Proposition follows from the results of Sjamaar-Lerman \cite{SjL} and Kempf-Ness \cite{KN}. 
\end{proof}

The action of $L$ on $E_{\mathcal{R}}(T^*K)$ has momentum map $\mu_{L}(k, w) = d\phi^*(k w),$ where $d\phi^*: \mathfrak{k}^* \to \mathfrak{l}^*$ is the dual of the map on Lie algebras induced by $\phi.$  The space $X(\phi)$ is then the $L$-quotient of the subspace $\big\{ \big((p, u),(k, w)\big)\, \big|\, pu + d\phi^*(kw) = 0\big\}$.  Let $\mu_{\mathbb{T}_L \times \mathbb{T}_K}$ be the momentum map of the residual action of $\mathbb{T}_L \times \mathbb{T}_K$ on $X(\phi)$.  For the following proposition see also \cite[Section 3]{BSj}. 

\begin{proposition}\label{branch-cone-image}
The momentum image of $X(\phi)$ with respect to its residual action by $\mathbb{T}_L\times \mathbb{T}_K$ is $\Delta(\phi).$
\end{proposition}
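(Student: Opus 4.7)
The plan is to identify both $\mu_{\mathbb{T}_L \times \mathbb{T}_K}(X(\phi))$ and $\Delta(\phi)$ as the closed convex cone determined by a common set of rational points, and then conclude by density.

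First I would observe that both $\mu_{\mathbb{T}_L \times \mathbb{T}_K}(X(\phi))$ and $\Delta(\phi)$ are closed convex polyhedral cones contained in $\Delta_L \times \Delta_K$. For $\Delta(\phi)$ this is already established in the preamble to the proposition, using that $\C[X(\phi)]$ is a finitely generated graded algebra with support monoid $P(\phi)$. For the momentum image I would invoke the Sjamaar--Lerman \cite{SjL} extension of the Atiyah--Guillemin--Sternberg convexity theorem applied to the singular Hamiltonian $(\mathbb{T}_L \times \mathbb{T}_K)$-space $X(\phi)$; containment in $\Delta_L \times \Delta_K$ follows from the construction of symplectic implosion, whereby the residual torus moment maps on $E_{\mathcal{R}}(T^*L)$ and $E_{\mathcal{R}}(T^*K)$ already take values in the respective positive Weyl chambers.

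Next I would match the rational points by bringing in the GIT picture. By Proposition \ref{affine-branching-reduction}, for any $(\eta,\lambda) \in \mathfrak{X}^+_L \times \mathfrak{X}^+_K$, the symplectic reduction of $X(\phi)$ at level $(\eta, \lambda)$ by $\mathbb{T}_L \times \mathbb{T}_K$ is identified with the projective branching variety $X_{\eta, \lambda}(\phi) = \Proj \bigoplus_{n \geq 0} \Hom_L(M_{n\eta}, M_{n\lambda})$. A rational level lies in the momentum image precisely when its reduction is non-empty; via Kempf--Ness \cite{KN} this is equivalent to non-emptiness of the projective GIT quotient, which in turn is equivalent to the non-vanishing of $\Hom_L(M_{n\eta}, M_{n\lambda})$ for some $n \geq 1$. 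By the finite generation remark in the preamble, this is exactly the statement that $(\eta, \lambda)$ is a rational point of $\Delta(\phi)$.

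Finally, since both closed convex polyhedral cones have the same rational points and rational points are dense in such cones, I conclude $\mu_{\mathbb{T}_L \times \mathbb{T}_K}(X(\phi)) = \Delta(\phi)$. The main technical obstacle I anticipate is carefully justifying the singular Kempf--Ness / convexity correspondence in this setting, where $X(\phi)$ is typically singular and not projective over a point. However this is exactly the content of the symplectic-algebraic dictionary already used in the proof of Proposition \ref{affine-branching-reduction}, and the relevant admissibility of the momentum maps for both the $L$-action and the $(\mathbb{T}_L \times \mathbb{T}_K)$-action is inherited from the ambient linear Hermitian structure coming from a $G$-equivariant embedding of the imploded cotangent bundles into a sum of irreducible representations.
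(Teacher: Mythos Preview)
Your proposal is correct and follows essentially the same route as the paper's own proof: both arguments match lattice/rational points of the momentum image with those of $\Delta(\phi)$ via the Kempf--Ness identification of symplectic reductions with the projective branching GIT quotients $X_{\eta,\lambda}(\phi)$, and then conclude by density of rational points in the two polyhedral cones. You are slightly more explicit than the paper in invoking a singular convexity theorem to justify that the momentum image is a closed rational polyhedral cone, which the paper uses tacitly in its final sentence.
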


\begin{proof}
If $(\eta, \lambda) \in (\mathfrak{X}^+_H \times \mathfrak{X}^+_G) \cap \mu_{\mathbb{T}_L\times \mathbb{T}_K}(X(\phi))$ then the GIT-quotient $H\! \tensor[_{(\eta, \lambda)}]{\!\qlb}{} \big(H/P \times G/Q\big)$ is non-empty. It follows that $\Hom_H(M_{k\eta}, M_{k\lambda}) \neq 0$ for some sufficiently large integer $k$, so that $(\eta, \lambda) \in \Delta(\phi)$.  If $(\eta, \lambda) \in P(\phi)$, we can run this argument in reverse to conclude that the reduction $$X_{(\eta, \lambda)}(\phi) =  X(\phi) \q_{(\eta, \lambda)} \mathbb{T}_L\times \mathbb{T}_K= L _{\ 0}{\!\ql}{} \big(\mathcal{O}_{-\eta}\times \mathcal{O}_{-\lambda}\big)$$ must be non-empty, so that $(\eta, \lambda)$ is in the momentum image.  It follows that  $(\mathfrak{X}^+_H \times \mathfrak{X}^+_G) \cap \mu_{\mathbb{T}_L\times \mathbb{T}_K}(X(\phi)) = P(\phi)$.  Since any rational point in $ \mu_{\mathbb{T}_L\times \mathbb{T}_K}(X(\phi))$ can be scaled by some integer to obtain a point in $\mathfrak{X}^+_H \times \mathfrak{X}^+_G$, it follows that the rational points of $ \mu_{\mathbb{T}_L\times \mathbb{T}_K}(X(\phi))$ and $\Delta(\phi)$ coincide, so these cones must likewise coincide. 
\end{proof}

\begin{example}[Diagonal $\SU_2(\C)$ branching]
For what follows see \cite{HaKn} and \cite{HMM}. 

Let $\delta_n: \SU(2) \to \SU(2)^{n-1}$ (respectively $\delta_n: \SL_2(\C) \to \SL_2(\C)^{n-1}$) be the diagonal embedding.  The branching variety $X(\delta_n)$ is the diagonal GIT-quotient of $(\SL_2(\C) \q U)^n$ by $\SL_2(\C)$, and the diagonal reduction of $E_{\mathcal{R}}(\SU(2))^n$ by the left action of $\SU(2)$ at level $0$.  The space $\SL_2(\C) \q U$ is isomorphic to $\C^2$ both as a variety and a symplectic manifold, so $X(\delta_n)$ is isomorphic to $\SL_2(\C) \ql \C^{2\times n}$.  By the first fundamental theorem of invariant theory (see \cite{Dol}), $X(\delta_n)$ is the affine cone of the Grassmannian variety $Gr_2(\C^n)$ with respect to its Pl\"ucker embedding in the projective space $\mathbb{P}(\bigwedge^2(\C^n))$. The action by the maximal torus $\mathbb{T} \subset \SU(2)$ on $\C^2$ is the standard diagonal action, and the residual $\mathbb{T}^n$ action on $X(\delta_n)$ agrees with the action of the maximal torus of $\U(n)$ on the Grassmannian variety. The momentum image $\Delta(\delta_n)$  is the convex hull of the rays through points of the form $(0, \ldots, 1, \ldots, 1, \ldots, 0)$; this is a cone over the so-called second hypersimplex.  A point $\vec{r}$ is in $P(\delta_n)$ if and only if its entries can be the sides of an $n$-sided polygon. 

The projective branching varieties $X_{\vec{r}}(\delta_n)$ are precisely the weight varieties of the Grassmannian, namely they are GIT-quotients of $\Gr_2(\C^n)$ by the maximal torus of $\GL_n(\C).$  As explained in \cite{HMM}, the symplectic geometry of these spaces fits naturally with their interpretation as moduli spaces of polygons in $\R^3.$  In particular, the reduction $X_{\vec{r}}(\delta_n)$ is the moduli space of $n$-sided polygons in $\R^3$ with prescribed sidelengths $r_1, \cdots, r_n$, up to isometry.  Likewise $X(\delta_n)$ can be viewed as the moduli space of $n$-sided polygons in $\R^3$ with an $\SU(2)$-framing oriented along each edge.  In this latter context, the residual $\mathbb{T}^n$-action be interpreted as the action which spins each of these $n$ frames around its associated edge.

\end{example}

\subsection{Branching degeneration}\label{secondrefsection}

We recall the theory of branching degenerations, cfr. \cite{M}.  These are flat degenerations of branching varieties
associated to factorizations of the map $\phi$ in the category of connected linearly reductive groups:

$$
\begin{CD}
H @>\pi>> F @>\psi>> G.\\
\end{CD}
$$ 

\noindent
In this case, the pullback functor $\phi^*$ is the composition $\psi^* \circ \pi^*$, and any multiplicity space $\Hom_H(M_{\eta}, M_{\lambda})$ may be written as a direct sum:

\begin{equation}
\Hom_H(M_{\eta}, M_{\lambda}) = \bigoplus_{\beta \in \mathfrak{X}^+_F} \Hom_H(M_{\eta}, M_{\beta})\otimes \Hom_F(M_{\beta}, M_{\lambda}).\\
\end{equation}

\noindent
Naively, this implies that as a vector space $\C[X(\phi)]$ can be realized as the subspace of $$\C[X(\psi) \times X(\pi)] = \bigoplus_{\eta \in \mathfrak{X}^+_H, \beta_1, \beta_2 \in \mathfrak{X}^+_F, \lambda \in \mathfrak{X}^+_G}  \Hom_H(M_{\eta}, M_{\beta_1})\otimes \Hom_F(M_{\beta_2}, M_{\lambda})$$ where $\beta_1 = \beta_2$.  Notice that this subspace is the invariant subalgebra $$\C[X(\psi) \times X(\pi)] ^{T_F} \subset \C[X(\psi) \times X(\pi)]$$ by an action of the antidiagonal subtorus $T_F \subset T_F \times T_F$.  We let $X(\psi, \pi)$ be the affine GIT-quotient of $\C[X(\psi) \times X(\pi)]$ by this action.

The map $\psi$ can be used to write the $G$-variety $G\q U_G$ as the $F$-variety $\big(F \times G\q U_G\big)\q F$.  The factorization $\phi = \psi \circ \pi$ then implies that the action of $H$ on $G\q U_G = \big(F \times G\q U_G\big)\q F$ factors through the residual left $F$-action. We get the following inefficient but useful description of $X(\phi)$ as a consequence:

\begin{equation}
X(\phi) = H \ql \big(H\q U_H \times F \times G \q U_G\big)\q F,
\end{equation}

\begin{equation}
\C[X(\phi)] = \C[H\q U_H \times F \times G\q U_G]^{H \times F} \subset \C[H\q U_H \times F \times G\q U_H].
\end{equation}

Recall (see Theorem \ref{ahcontract}) that  the $F\times F$-stable valuations on the coordinate ring $\C[F]$ have the structure of a convex cone identified with $\Delta_F^{\vee}$.  

\begin{proposition}\label{branching-valuations}
The following hold for $H\q U_H \times F \times G \q U_G$ and its quotient $X(\phi).$
   
\begin{enumerate}
\item\label{itemone} For any triple $(f, h, g) \in \Delta_H^{\vee}\times \Delta_F^{\vee}\times \Delta_G^{\vee}$ there is a valuation on $\C[H\q U_H \times F \times G \q U_G]$ and $\C[X(\phi)].$\\
\item\label{itemtwo} With $(f, h, g)$ as above, if $h$ is a member of the top face of $\Delta_F^{\vee}$, then the associated graded algebra of the corresponding valuation on $\C[H\q U_H \times F \times G \q U_G]$  is the coordinate ring $\C[H\q U_H \times \As(F) \times G \q U_G]$.\\ 
\item \label{itemthree} With $(f, h, g)$ as above, if $h$ is a member of the top face of $\Delta_F^{\vee}$, then the associated graded algebra of the corresponding valuation on $\C[X(\phi)]$  is the coordinate ring $\C[H\q U_H \times \As(F) \times G \q U_G]^{H \times F}$.\\
\end{enumerate}
\end{proposition}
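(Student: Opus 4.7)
The plan is to build the valuation on $\C[H\q U_H \times F \times G\q U_G]$ as a tensor product of Popov-style filtrations on each of the three factors, and then descend to $\C[X(\phi)]$ by restriction and by taking invariants.

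For item~(\ref{itemone}), I would associate to each of $f\in \Delta_H^\vee$, $h\in \Delta_F^\vee$, $g\in \Delta_G^\vee$ a Popov filtration on the corresponding coordinate ring. The case of $\C[F]$ is the content of Theorem~\ref{ahcontract}; for $\C[H\q U_H]$, viewed as an $H$-variety, one induces the filtration via the construction following Theorem~\ref{ahcontract}, so that each isotypical component $M_\eta \subset \C[H\q U_H]$ is assigned weight $\eta(f)$, and analogously for $\C[G\q U_G]$ using $g$. Taking the tensor product filtration $\mathcal{F}_{\leq n} = \sum_{i+j+k\leq n} \mathcal{F}^f_{\leq i}\otimes \mathcal{F}^h_{\leq j}\otimes \mathcal{F}^g_{\leq k}$ on $\C[H\q U_H]\otimes \C[F] \otimes \C[G\q U_G]$, the associated graded equals the tensor product of the three individual associated gradeds. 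Each is a domain (Popov's theorem in the middle, trivially on the sides as explained below), and tensor products of finitely generated $\C$-domains are again domains, so the tensor filtration defines a valuation. Its restriction to the subring $\C[X(\phi)] = \C[H\q U_H \times F\times G\q U_G]^{H\times F}$ has associated graded which injects into that of the ambient ring, hence is a domain, and so the restricted filtration is itself a valuation.

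For item~(\ref{itemtwo}), the key observation is that the filtrations on the outer two factors return the ring itself. Indeed $\C[H\q U_H] = \bigoplus_{\eta\in \mathfrak{X}_H^+} M_\eta$, equipped with Cartan multiplication $M_\eta \otimes M_\mu \twoheadrightarrow M_{\eta+\mu}$, is already \emph{graded} (not merely filtered) by $\eta(f)$, so $\gr_f \C[H\q U_H] = \C[H\q U_H]$; the same holds for $\C[G\q U_G]$. For the middle factor, Popov's theorem gives $\gr_h \C[F]\cong \C[\As(F)]$ whenever $h$ lies in the top (open) face of $\Delta_F^\vee$, since precisely for such regular $h$ the associated linear ordering refines the dominance order on weights. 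Combining these through the identity $\gr(A\otimes B\otimes C) = \gr A \otimes \gr B \otimes \gr C$ for filtrations bounded below yields the claimed isomorphism with $\C[H\q U_H\times \As(F)\times G\q U_G]$.

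For item~(\ref{itemthree}), the strategy is to pass to $(H\times F)$-invariants and invoke the commutativity of ``taking invariants'' with ``passing to associated graded'' for a reductive group action preserving the filtration. The necessary stability follows from Theorem~\ref{ahcontract}: the Popov filtration on $\C[F]$ is $(F\times F)$-stable, while those on $\C[H\q U_H]$ and $\C[G\q U_G]$ are $H$- and $G$-stable (and hence $F$-stable through $\psi$); thus the tensor-product filtration is $(H\times F)$-stable. A Reynolds operator argument for the reductive group $H\times F$ then gives $(\gr A)^{H\times F} = \gr(A^{H\times F})$ with $A=\C[H\q U_H \times F\times G\q U_G]$, and combining this with item~(\ref{itemtwo}) yields the conclusion. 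The main technical delicacy, and the step I would write out with the most care, is verifying precisely this interaction of the tensor-product filtration with the $(H\times F)$-action, since one must track how $H$ and $F$ act on each factor via $\pi$ and $\psi$ and confirm that the Popov filtrations were defined compatibly with those actions; once the stability is in hand, the passage to invariants is formal.
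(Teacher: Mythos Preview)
Your proposal is correct and follows essentially the same route as the paper's own proof: the paper defines the filtration explicitly as $\mathcal{F}^{f,h,g}_{\leq m} = \bigoplus_{\eta(f)+\beta(h)+\lambda(g)\leq m} M_\eta\otimes \Hom(M_\beta,M_\beta)\otimes M_\lambda$, which is exactly your tensor-product filtration, invokes Theorem~\ref{ahcontract} for the middle factor, observes $(H\times F)$-linearity to pass to invariants, and appeals to the associated gradeds being domains. Your version is simply more expansive on justification, spelling out that the outer factors are already graded under Cartan multiplication and that the Reynolds operator yields $\gr(A^{H\times F})=(\gr A)^{H\times F}$, points the paper compresses into ``straightforward'' and ``everything in sight is $H\times F$-linear.''
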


\begin{proof}
Using Theorem \ref{ahcontract}, it is straightforward to see that the filtration of $H\q U_H \times F \times G \q U_G  $ defined by the spaces $$\mathcal{F}^{f, h, g}_{\leq m} = \bigoplus_{\eta(f) + \beta(h) + \lambda(g) \leq m} M_{\eta}\otimes \Hom(M_{\beta}, M_{\beta}) \otimes M_{\lambda}$$ has associated graded algebra $\C[H\q U_H \times \As(F) \times G \q U_G]$, which proves (\ref{itemtwo}).  Everything in sight is $H\times F$-linear, so (\ref{itemtwo}) implies (\ref{itemthree}).  Furthermore, all of the associated graded algebras we have just encountered are domains, which implies that each of these filtrations comes from a valuation and proves item (\ref{itemone}).
\end{proof}

If $h$ is chosen in the top face of $\Delta_F^{\vee}$, the associated graded algebra of the filtration $\mathcal{F}^{f, h, g}$  on $\C[X(\phi)]$ is the following algebra of invariants:

\begin{equation}
\gr_{\mathcal{F}^{f, h, g}}(\C[X(\phi)]) = \C\big[H\q U_H \times \big(F \q U_- \times U_+ \ql F\big)\q T_F \times G\q U_G\big]^{H \times F} = \C[X(\psi, \pi)].\\
\end{equation}

\noindent
Here we have used the fact that $U_+ \ql F$ can be isomorphically identified with $F\q U_-$ as $F \times T_F$-varieties.  From now on we let $C(\psi, \pi)$ denote the cone $\Delta_H^{\vee}\times \Delta_F^{\vee}\times \Delta_G^{\vee}$, thought of as a cone of valuations on $\C[X(\phi)].$

The construction of $X(\phi)$ as a $H\times F$ quotient of $H \q U_G \times F \times G \q U_G$  can be carried out by replacing every instance of $F$ above with the Vinberg monoid $S_F$.  By doing so, we produce a flat family $$E(\psi, \pi) = H \qlb \big(H \q U_H \times S_F \times G \q U_G\big) \qb F$$ over $\mathbb{A}_F$ associated to the factorization $\phi = \psi \circ \pi$.  By Theorem \ref{ahcontract} the fiber of $E(\psi, \pi)$ over $0 \subset \mathbb{A}_F$ is $X(\psi, \pi)$.  For any valuation $h \in \Delta_F^{\vee}$ there is a $1$-parameter family in $\mathbb{A}_F$, and base-changing with respect to this family produces the degeneration corresponding to the filtration $\mathcal{F}^{0, h, 0}$ described above.

\begin{center}
\begin{tikzcd}
\A^1\times_{\A_F} E(\psi, \pi) \ar{r} \ar{d}{\pi_h} & E(\psi, \pi)\ar{d}{\pi_F}
 \\ \A^1\ar{r} & \A_F .
\end{tikzcd}
\end{center}

 Notice that $X(\psi, \pi)$ has a residual $T_F$ action, accordingly the coordinate ring $\C[X(\psi, \pi)]$ has the following decomposition into $T_H \times T_F \times T_G$ isotypical components:

\begin{equation}\label{iso-decomp-degen-branch}
\C[X(\psi, \pi)] = \bigoplus_{\eta \in \mathfrak{X}^+_H, \beta \in \mathfrak{X}^+_F, \lambda \in \mathfrak{X}^+_G} \Hom_H(M_{\eta}, M_{\beta}) \otimes \Hom_F(M_{\beta}, M_{\lambda}).\\
\end{equation}

\noindent
As a consequence, the support of the $T_H \times T_F \times T_G$ isotypical decomposition  of $\C[X(\psi, \pi)]$ is the fiber product monoid $$P(\psi, \pi)= P(\pi) \times_{\mathfrak{X}^+_F} P(\psi) = \{(w,v) \in P(\pi)\times P(\psi)| \pi_F(w) = \pi_F(v)\} \subset P(\pi) \times P(\psi).$$  This monoid is polyhedral and comes with a canonical projection $$p_{\psi, \pi}: P(\psi, \pi) \to P(\psi \circ \pi) = P(\phi) \subset \Delta_H \times \Delta_G.$$  
We let $\Delta(\psi, \pi)$ be the convex hull of $P(\psi, \pi)$; this is by definition the fiber product $\Delta(\psi)\times_{\Delta_F}\Delta(\pi)$.

For any pair of weights $(\eta, \lambda) \in P(\phi)$ we can form a new flat family over $\mathbb{A}_F$ by taking a GIT-quotient by $T_H \times T_G$ with respect to the character $\chi_{\eta, \lambda}$:

\begin{equation}
E_{\eta, \lambda}(\psi, \pi) = E(\psi, \pi) \q_{\! (\eta, \lambda)} T_H \times T_G,
\end{equation}

\noindent
this space is obtained by taking $\Proj$ of the following $\Z_{\geq 0}$-graded subalgebra of $\C[E(\psi, \pi)]$
(see Equation \ref{iso-decomp-degen-branch}):

\begin{equation}
\C[E_{\eta, \lambda}(\psi, \pi)] = \bigoplus_{k \in \Z_{\geq 0}, \beta - \alpha \in \mathfrak{W}} \Hom_H(M_{k\eta}, M_{\alpha})\otimes \Hom_F(M_{\alpha}, M_{k\lambda}) t^{\beta}.
\end{equation}

\noindent 
(Recall that $\mathfrak{W}$ is the root lattice).  The next proposition is a straightforward calculation. 

\begin{proposition}\label{lastrefbis}
The fiber over $0 \in \mathbb{A}_F$ in the family $E_{\eta, \lambda}(\psi, \pi)$ is the GIT-quotient of $X(\psi)\times X(\pi)$ by $T_H \times T_F \times T_G$, taken with respect to the linearization of the trivial line bundle on $X(\psi)\times X(\pi)$ defined by the product character $\chi_{(\eta, 0, \lambda)}.$   A general fiber of this family is isomorphic to $X_{\eta, \lambda}(\phi).$
\end{proposition}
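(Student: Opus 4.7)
The plan is to verify the two claims by computing the fibers of $E_{\eta,\lambda}(\psi,\pi) \to \A_F$ via base-change compatibility. The key observation is that the $T_H \times T_G$-action used to linearize and take the quotient defining $E_{\eta,\lambda}(\psi,\pi)$ commutes with the $T_F^{\abs}/Z_F$-action on $S_F$ that provides the $\A_F$-structure. Consequently, the projective $T_H\times T_G$-GIT quotient at character $(\eta,\lambda)$ commutes with base change along any inclusion $\{p\}\hookrightarrow \A_F$, so it suffices to identify the fiber of $E(\psi,\pi)$ over $p$ and then apply the $T_H\times T_G$-quotient at $(\eta,\lambda)$.

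For the generic fiber, I would use that $\pi_F^{-1}(p)$ is $(F\times F)$-equivariantly isomorphic to $F$ for any point $p$ of the dense $T_F^{\abs}/Z_F$-orbit in $\A_F$. Base-changing $E(\psi,\pi)=H\qlb (H\q U_H \times S_F \times G\q U_G)\qb F$ along such an inclusion replaces $S_F$ by $F$, and the standard identification $(H\q U_H\times F \times G\q U_G)\qb F \cong H\q U_H \times G\q U_G$ (in which the diagonal $F$-quotient absorbs the middle factor while the residual $H$-action on $G\q U_G$ factors through $\phi=\psi\circ\pi$) recovers $X(\phi)$. The projective GIT quotient at $(\eta,\lambda)$ by $T_H\times T_G$ then produces $X_{\eta,\lambda}(\phi)$ by definition of the projective branching variety.

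For the fiber over $0\in\A_F$, Proposition \ref{branching-valuations}(\ref{itemthree}), specialized to $f=g=0$, already identifies the fiber of $E(\psi,\pi)$ over $0$ with $X(\psi,\pi)$; equivalently, in the presentation of $\C[E(\psi,\pi)]$ the maximal ideal of $0\in\A_F$ is generated by the simple root characters $\chi^{\alpha_j}$, each of which shifts the Vinberg weight $\beta \mapsto \beta + \alpha_j$, so that only the pieces with $\beta=\alpha$ survive in the quotient and recover $\C[X(\psi,\pi)]$. Combined with the preceding observation, the fiber of $E_{\eta,\lambda}(\psi,\pi)$ over $0$ equals $X(\psi,\pi)\q_{(\eta,\lambda)} T_H\times T_G$. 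The definition $X(\psi,\pi)=[X(\psi)\times X(\pi)]\q_0 T_F$ together with reduction in stages for the commuting tori $T_F$ and $T_H\times T_G$ then identifies this with $[X(\psi)\times X(\pi)]\q_{(\eta,0,\lambda)} T_H\times T_F\times T_G$, as claimed. The main subtlety is the compatibility of the $T_H\times T_G$-GIT quotient with base change to a fiber over $\A_F$, but this is automatic from the commuting actions and requires no new input beyond the machinery already developed in this section.
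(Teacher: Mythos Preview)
Your proposal is correct and in fact supplies precisely the ``straightforward calculation'' that the paper alludes to but does not write out: the paper gives no proof beyond that phrase. Your argument---commuting the $T_H\times T_G$-GIT quotient with base change over $\A_F$, then identifying the generic fiber of $E(\psi,\pi)$ with $X(\phi)$ via $\pi_F^{-1}(p)\cong F$ and the special fiber with $X(\psi,\pi)$ (already established just above Proposition~\ref{lastrefbis}), followed by reduction in stages for the commuting tori---is exactly the intended unpacking.
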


\noindent
From now on we let  $X_{\eta, \lambda}(\psi, \pi)$ denote the GIT-quotient $[X(\psi)\times X(\pi)] \q_{(\eta, 0, \lambda)} T_H \times T_F \times T_G$.

\subsection{Chains of subgroups}

The constructions of the cone of valuations $C(\psi, \pi)$ on the coordinate ring $\C[X(\phi)]$, and the degeneration $X(\phi) \Rightarrow [X(\psi, \pi)]$ from a factorization $\phi = \pi \circ \psi$ can be readily generalized to a chain of maps:
$$
\begin{CD}
G_0 @>\phi_1>> \ldots @>\phi_n>> G_n.\\
\end{CD}
$$

\noindent
We leave the details of the following to the reader (the result follows from repeatedly applying Propositions \ref{branching-valuations} and \ref{lastrefbis}):

\begin{proposition}\label{chainbranch}
For $1 \leq i \leq n$ let $\phi_i: G_{i-1} \to G_i$ be a map of connected, linearly reductive groups, and let $X(\phi_n \circ \ldots \circ \phi_1)$ be the branching variety of the composition.  Let $C(\vec{\phi}) = \prod_{i = 0}^n \Delta_i^{\vee}$ and $\eta \in \mathfrak{X}^+_{G_0}, \lambda \in 
\mathfrak{X}^+_{G_n}.$

\begin{enumerate}
\item Each point $\vec{h} \in C(\vec{\phi})$ defines a valuation on $\C[X(\phi_n \circ \ldots \circ \phi_1)].$\\
\item  The associated graded algebra of the valuation associated to an integral point in the interior is $\C\big[(X(\phi_n) \times \ldots \times X(\phi_1))\q\prod T_i\big] = \C[X(\vec{\phi})],$ where the $T_i \subset T_i \times T_i$ acts antidiagonally on $X(\phi_i) \times X(\phi_{i-1})$.\\
\item There is a flat family $E(\vec{\phi}) = [G_0 \q U_0 \times \prod_{i =1}^{n-1} S_{G_i} \times G_n \q U_n] \q \prod_{i = 1}^{n-1} G_i$ over $\prod_{i = 1}^{n-1} \mathbb{A}_i$ with general fiber $X(\phi_n \circ \ldots \circ \phi_1)$ and $0$ fiber $X(\vec{\phi})$.  For integral $\vec{h} \in C(\vec{\phi})$ in the interior, the associated family obtained by base-change has the same special fiber $X(\vec{\phi})$.\\
\item The support of the isotypical decomposition of the coordinate ring of the special fiber under the action of $\prod_{i = 1}^{n-1} T_i$ is the fiber product monoid $P(\vec{\phi}) = P(\phi_1)\times_{\mathfrak{X}^+_{G_1}}\ldots \times_{\mathfrak{X}^+_{G_{n-1}}} P(\phi_n).$ This monoid comes with a projection $p_{\vec{\phi}}: P(\vec{\phi}) \to P(\phi_n \circ \ldots \circ \phi_1).$\\
\item The cone $C(\vec{\phi})$ and the family $E(\vec{\phi})$ induce a cone of valuations and a degeneration $E_{\eta, \lambda}(\vec{\phi})$ of $X_{\eta, \lambda}(\phi_n \circ \ldots \circ \phi_1)$. This degeneration has $0$ fiber $X_{\eta, \lambda}(\vec{\phi})$.  In particular the family $E_{\eta, \lambda}(\vec{\phi})$ is computed by taking a $T_0\times T_n$-GIT-quotient of $E(\vec{\phi})$ with respect to the character defined by $(\eta, \lambda).$ \\
\end{enumerate}

\end{proposition}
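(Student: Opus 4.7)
My plan is to prove Proposition \ref{chainbranch} by a direct iteration of the machinery developed for the single-factorization case (Propositions \ref{branching-valuations} and \ref{lastrefbis}) and the Vinberg-monoid construction of Proposition \ref{basechange}. The key structural observation is that the expanded description of the coordinate ring of a branching variety extends to a chain: by inserting a copy of each intermediate group between consecutive quotients one obtains
\[
\C\bigl[X(\phi_n\circ\cdots\circ\phi_1)\bigr] = \C\bigl[G_0\q U_0 \times G_1 \times \cdots \times G_{n-1} \times G_n\q U_n\bigr]^{\prod_{i=0}^{n-1}G_i},
\]
where $G_0$ acts by left multiplication on $G_0\q U_0$ and via $\phi_1$ on $G_1$, each intermediate $G_i$ ($1\le i\le n-1$) acts by right multiplication on itself and via $\phi_{i+1}$ on the next slot, and the pattern continues until $G_n\q U_n$ on the right end.

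For the flat family $E(\vec{\phi})$ of part (3), I replace each intermediate $G_i$ with its Vinberg monoid $S_{G_i}$; the bi-$G_i$ action extends from the group of units, so the GIT quotient
\[
E(\vec{\phi}) = \bigl[G_0\q U_0 \times S_{G_1} \times \cdots \times S_{G_{n-1}} \times G_n\q U_n\bigr]\qb \prod_{i=0}^{n-1} G_i
\]
makes sense and fibers flatly over $\prod_{i=1}^{n-1}\mathbb{A}_{G_i}$, the flatness following from Vinberg's theorem (flatness of $\pi_{G_i}:S_{G_i}\to\mathbb{A}_{G_i}$) together with the fact that linearly reductive invariants preserve flatness of affine families. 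The generic fiber, where each $S_{G_i}$ specializes to $G_i$, is $X(\phi_n\circ\cdots\circ\phi_1)$ by the expanded description above. The fiber over $0$ replaces each $S_{G_i}$ by $\As(G_i)=G_i^{\hc}=(G_i\q U_i^-\times U_i^+\ql G_i)\q T_i$ via Theorem \ref{ahcontract}, and reorganizing the resulting iterated GIT quotient recovers exactly $X(\vec{\phi})=[\prod_{i=1}^n X(\phi_i)]\q\prod_{i=1}^{n-1} T_i$. Parts (1)--(3) then follow together: any $\vec{h}\in C(\vec{\phi})$ determines a monoid morphism $\mathbb{A}^1\to\prod\mathbb{A}_{G_i}$ and, by the base-change principle of Proposition \ref{basechange} applied factor-by-factor, a degeneration of $X(\phi_n\circ\cdots\circ\phi_1)$ whose Rees-algebra description exhibits the filtration $\mathcal{F}^{\vec{h}}$ as a valuation; regularity of $\vec{h}$ ensures the special fiber is the full horospherical contraction $X(\vec{\phi})$.

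Part (4) follows from combining the Peter-Weyl decomposition of each $\C[G_i]$ with the contraction construction: the $(T_0\times\cdots\times T_n)$-isotypical decomposition of $\C[X(\vec{\phi})]$ is, by construction and the isotypical decomposition \eqref{iso-decomp-degen-branch} applied repeatedly,
\[
\bigoplus_{(\eta,\alpha_1,\ldots,\alpha_{n-1},\lambda)}\Hom_{G_0}(M_\eta,M_{\alpha_1})\otimes\Hom_{G_1}(M_{\alpha_1},M_{\alpha_2})\otimes\cdots\otimes\Hom_{G_{n-1}}(M_{\alpha_{n-1}},M_\lambda),
\]
whose graded support is by definition the fiber product monoid $P(\vec{\phi})$; the projection $p_{\vec{\phi}}$ is induced by the natural composition pairing that recovers $\Hom_{G_0}(M_\eta,M_\lambda)$.

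For part (5), I take the $T_0\times T_n$ GIT-quotient of $E(\vec{\phi})$ with respect to the character $\chi_{(\eta,\lambda)}$; this action commutes with the structure map to $\prod\mathbb{A}_{G_i}$ and hence produces a flat family whose generic fiber is $X_{\eta,\lambda}(\phi_n\circ\cdots\circ\phi_1)$ and whose special fiber is $X_{\eta,\lambda}(\vec{\phi})$, exactly as in Proposition \ref{lastrefbis}. The main bookkeeping obstacle I anticipate is tracking the many commuting actions (left/right on each $S_{G_i}$, linked through each $\phi_i$) to verify that the flat family and the associated-graded identifications are compatible at each stage; this is ultimately resolved because each individual step is already handled by Propositions \ref{branching-valuations} and \ref{basechange}, and the construction is manifestly $G_i$-equivariant for each $i$, allowing us to pass to the quotients consistently and iteratively.
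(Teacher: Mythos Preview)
Your proposal is correct and follows essentially the same approach as the paper: the paper's proof is simply the one-line remark that the result follows from repeatedly applying Propositions \ref{branching-valuations} and \ref{lastrefbis}, and your argument is a careful, explicit execution of precisely that iteration (expanding the branching variety by inserting each intermediate $G_i$, replacing each by its Vinberg monoid, and tracking the isotypical decomposition and GIT quotients through the chain). If anything, you are more careful than the paper in recording that the quotient in the family $E(\vec{\phi})$ should include $G_0$ as well as $G_1,\ldots,G_{n-1}$.
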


\begin{example}[Tree factorizations and diagonal branching]

The diagonal map $\delta_n: \SL_2(\C) \to \SL_2(\C)^{n-1}$ has distinguished class of factorizations which are combinatorially indexed by trees $\tree$ with $n$ leaves labeled from the set $[n].$  This construction is described in full detail in \cite{M}, we will outline it here. 

Fix a tree $\tree$ with vertex set $V(\tree)$ and edge set $E(\tree).$  The leaves $\ell \in [n]$ are each identified with a factor of the product $\SL_2(\C) \q U_+ \times (\SL_2(\C) \q U_+)^{n-1}$ used in the definition of $X(\delta_n).$   We place an orientation on each edge $e \in E(\tree)$ in the unique way so that $1 \in [n]$ is the unique source and the other leaves are the only sinks.  Let $S_i$ be the set of those vertices of distance $i$ from the leaf $1 \in [n]$, and let $L_i$ be the set of leaves in $\cup_{j =1}^i S_i$.    Let $G_i$ be the product $\SL_2(\C)^{S_i}\times \SL_2(\C)^{L_i}$.  The orientations on $E(\tree)$ can be used to define a chain of maps $\phi_i: G_i \to G_{i+1}$ as follows. Each component $\SL_2(\C) \subset \SL_2(\C)^{L_i}$ is mapped by the identity to its corresponding copy in $\SL_2(\C)^{L_{i+1}}$, and each component $\SL_2(\C) \subset \SL_2(\C)^{S_i}$ is mapped diagonally into the product of components $\SL_2(\C) \subset \SL_2(\C)^{S_{i+1}}\times \SL_2(\C)^{L_{i+1}}$ whose corresponding vertices are connected to the vertex in question.  The maps $\phi_i$ define a factorization of $\delta_n$, so we may apply Proposition \ref{chainbranch}. 

In the case that the tree $\tree$ is trivalent, the cone $C_{\tree}$ of associated valuations has dimension $2n-3$.  If a tree $\tree'$ can be obtained from a tree $\tree$ by contracting a set of non-leaf edges $S$,  there is an inclusion $C_{\tree'} \subset C_{\tree}$ defined by considering those $v \in C_{\tree}$ with $v(e) = 0$ for $e \in S$.  Using these inclusions one can construct a polyhedral complex $T(n)$ which a tropical geometer may recognize as a the space of phylogenetic trees studied by Speyer and Sturmfels in \cite{SpSt}. The complex $T(n)$ can be mapped to the tropical Grassmannian variety constructed in \cite{SpSt} by sending each $v \in T(n)$ to the vector $(v(p_{1,2}), \ldots, v(p_{n-1,n}))$, where the $p_{ij}$ are the Pl\"ucker generators of $\C[X(\delta_n)]$.

The degenerations of $X(\delta_n)$ corresponding to a valuation $v \in C_{\tree} \subset T_n$ are described in \cite{HMM} (see also \cite{M14} for the case of a general reductive $G$).  For a non-zero weighting of the edges of a trivalent tree $\tree$, the degeneration is the affine toric variety $X(\tree)$  associated to the affine branching semigroup $P_{\tree}$.  Here $P_{\tree}$ is the set of integer weightings $w: E(\tree) \to \Z_{\geq 0}$ such that the three integers $w(i), w(j), w(k)$ assigned to edges sharing a common vertex have an even sum (i.e. $w(i) + w(j) + w(k) \in 2\Z$), and satisfy triangle inequalties: $|w(i) - w(k)| \leq w(j) \leq w(i) + w(k)$.

For each $\vec{r} \in P(\delta_n)$ one also obtains a degeneration of $X_{\vec{r}}(\delta_n)$ to the projective toric variety $X_{\vec{r}}(\tree)$ corresponding to a convex polytope $P_{\tree}(\vec{r})$.  The geometry of these toric varieties is the subject of \cite{HMM}.

\end{example}

\subsection{Branching contraction}

 Following the previous subsection, assume that the maps $\psi \circ \pi = \phi$
are induced from maps of compact semi-simple Lie groups:
$$
\begin{CD}
L @>\pi>> J @>\psi>> K.
\end{CD}
$$
Using the universal Hamiltonian property of the cotangent bundle $T^*J$ (see Section \ref{prelim}), we realize the branching variety as the following symplectic reduction: 
\begin{equation}
X(\phi) = L _{\ 0}{\!\qlb}{} \big(E_{\mathcal{R}}(T^*L) \times T^*J \times E_{\mathcal{R}}(T^*K)\big)\qb_{\! 0} J.
\end{equation}

\noindent
Likewise, the contracted branching variety $X(\psi, \pi)$ can be constructed with a symplectic reduction. 

\begin{proposition}
The contracted branching variety $X(\psi, \pi)$ can be identified with the symplectic reduction $$\big(X(\pi)\times X(\psi)\big)\q_{\! 0}\mathbb{T}_J =  L _{\ 0}{\!\qlb}{} \big(E_{\mathcal{R}}(T^*L) \times (T^*J)^{\sc} \times E_{\mathcal{R}}(T^*K)\big)\qb_{\! 0} J.$$  Furthermore, there is a residual Hamiltonian action of $\mathbb{T}_L \times \mathbb{T}_J \times \mathbb{T}_K$ on $X(\psi, \pi)$ with momentum image equal to $\Delta(\psi, \pi).$ 
\end{proposition}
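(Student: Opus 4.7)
The plan has two parts: first, to establish the symplectic identification of $X(\psi,\pi)$ with both reductions via Kempf-Ness, the involution $\iota:T^*J\to T^*J$, and reduction in stages; second, to adapt Proposition~\ref{branch-cone-image} for the momentum image.

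For the first identification, $X(\psi,\pi)$ is by construction the affine GIT quotient $\Spec\bigl(\C[X(\psi)\times X(\pi)]^{T_J}\bigr)$, with $T_J$ acting antidiagonally so that invariants match a $\mathbb{T}_J$-weight appearing in $X(\pi)$ with the same weight in $X(\psi)$. Using Proposition~\ref{affine-branching-reduction} together with Kempf-Ness \cite{KN} and Sjamaar-Lerman \cite{SjL}, this quotient is identified symplectically with
\[
X(\psi,\pi)\cong\bigl(X(\pi)\times X(\psi)\bigr)\q_{\! 0}\mathbb{T}_J,
\]
equipped with the antidiagonal $\mathbb{T}_J$-action.

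To pass to the second description, I would invoke the symplectic involution $\iota$ from Section~\ref{prelim}, which descends to a $(J\times\mathbb{T}_J)$-equivariant symplectomorphism $E_{\mathcal{R}}(T^*J)\cong E_{\mathcal{L}}(T^*J)$ interchanging the left and right $J$-actions and sending the $\mathbb{T}_J$-moment image from $\Delta_J$ to $-\Delta_J$. Applying $\iota$ to the $E_{\mathcal{R}}(T^*J)$-factor in $X(\psi)$ converts the antidiagonal $\mathbb{T}_J$-action into a diagonal one. Reduction in stages for stratified Hamiltonian $K$-spaces (Section~\ref{hamiltonianspaces}) then produces
\[
L _{\ 0}{\!\qlb}{}\bigl(E_{\mathcal{R}}(T^*L)\times (T^*J)^{\sc}\times E_{\mathcal{R}}(T^*K)\bigr)\qb_{\! 0} J \;=\; \bigl(X(\pi)\times X(\psi)\bigr)\q_{\! 0}\mathbb{T}_J,
\]
where we unfold $(T^*J)^{\sc}=\bigl(E_{\mathcal{R}}(T^*J)\times E_{\mathcal{L}}(T^*J)\bigr)\q_{\! 0}\mathbb{T}_J$, separate the $L$- and $J$-quotients (since $L$ acts only on $E_{\mathcal{R}}(T^*L)\times E_{\mathcal{R}}(T^*J)$ while $J$ acts only on $E_{\mathcal{L}}(T^*J)\times E_{\mathcal{R}}(T^*K)$, and both actions commute with $\mathbb{T}_J$), and finally re-apply $\iota$ to revert to $X(\psi)$ with its antidiagonal $\mathbb{T}_J$.

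For the momentum-image claim, the residual $\mathbb{T}_L\times\mathbb{T}_J\times\mathbb{T}_K$-action on $X(\psi,\pi)$ survives the $L$- and $J$-reductions, and its isotypical decomposition on $\C[X(\psi,\pi)]$ is given precisely by~(\ref{iso-decomp-degen-branch}). Hence the dominant integral characters appearing in the momentum image are exactly $P(\psi,\pi)$. Running the scaling argument of Proposition~\ref{branch-cone-image}---each $(\mu,\eta,\lambda)\in P(\psi,\pi)$ yields a non-empty projective reduction $X_{\mu,\eta,\lambda}(\psi,\pi)$, while any rational point of the momentum image can be scaled into $P(\psi,\pi)$---the rational points of the image coincide with those of $\Delta(\psi,\pi)=\Delta(\psi)\times_{\Delta_J}\Delta(\pi)$, and taking closures gives the claim. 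The main technical care required throughout is the sign-bookkeeping under $\iota$ and the verification that reduction in stages operates correctly across the stratified implosions, but the necessary tools are in place from Sections~\ref{prelim} and~\ref{hamiltonianspaces}.
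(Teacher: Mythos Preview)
Your proposal is correct and follows essentially the same approach as the paper's proof: invoke Kempf--Ness to identify the GIT quotient with the symplectic reduction, and use Proposition~\ref{branch-cone-image} to compute the momentum image. The paper's argument is much terser---it applies Kempf--Ness directly to $E_{\mathcal{R}}(T^*L)\times(T^*J)^{\sc}\times E_{\mathcal{R}}(T^*K)$ in one step and then reads off the momentum image as the locus in $\Delta(\psi)\times\Delta(\pi)$ where the two $\Delta_J$ components coincide---whereas you spell out the intermediate identification via $\iota$ and reduction in stages; this extra detail is legitimate and the sign-bookkeeping you flag is handled correctly.
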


\begin{proof}
We may apply the theorem of Kempf and Ness \cite{KN} to $E_{\mathcal{R}}(T^*L) \times (T^*J)^{\sc} \times E_{\mathcal{R}}(T^*K)$ as its symplectic structure comes from $L \times J$-stable embedding in a Hermitian vector space.  

There is a residual $\mathbb{T}_L$ action coming from the right action on $E_{\mathcal{R}}(T^*L)$, a residual $\mathbb{T}_K$ action coming from the right action on $E_{\mathcal{R}}(T^*K)$, and a residual $\mathbb{T}_J$ action coming from the action on $(T^*J)^{\sc}$.  By Proposition \ref{branch-cone-image} and the definition of $(T^*J)^{\sc}$, the momentum image of this action is the subset of $\Delta(\psi)\times \Delta(\pi) \subset \Delta_L \times \Delta_J \times \Delta_J \times \Delta_K$ where the $\Delta_J$ components coincide.  By definition, this is the cone $\Delta(\psi, \pi).$ 
\end{proof}

The symplectic contraction map $\Phi: T^*J \to T^*J^{\sc}$ clearly defines a surjective, continuous, and proper map $\widehat{\Phi}: E_{\mathcal{R}}(T^*L) \times (T^*J) \times E_{\mathcal{R}}(T^*K) \to E_{\mathcal{R}}(T^*L) \times (T^*J)^{\sc} \times E_{\mathcal{R}}(T^*K).$  As $\Phi$ is a map of Hamiltonian $(J\times J)$-spaces, $\widehat{\Phi}$ descends to give the branching contraction map:

\begin{equation}
\Phi_{\pi, \psi}: X(\phi) \to X(\psi, \pi).\\
\end{equation}

\noindent
One evaluates this map  on a class $(p, w),(k, v) \in X(\phi)$ by first computing $d\psi^*(kv) \in \mathfrak{j}^*,$ and then finding a $j \in J$ which diagonalizes this element: $jd\psi^*(kv) \in \Delta_J.$  The image $\Phi_{\psi, \pi}\big([(p, w)(k, v)]\big)$ is then the equivalence class $[(p, w),(j^{-1}, j d\psi^*(kv))]\times[(j, d\psi^*(k v),(k, v)] \in X(\psi, \pi)$.

\begin{proposition}
The branching contraction map $\Phi_{\pi, \psi}$ is  surjective, continuous, and proper. 
\end{proposition}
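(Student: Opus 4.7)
My plan is to reduce everything to the already established properties of the universal contraction $\Phi_{T^*J}:T^*J\to (T^*J)^{\sc}$ from Proposition \ref{Gcontract}, and then descend through the symplectic reduction by $L\times J$.

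First, I would verify that the product map
\[
\widehat{\Phi}=\mathrm{id}\times \Phi_{T^*J}\times \mathrm{id}\colon E_{\mathcal{R}}(T^*L)\times T^*J\times E_{\mathcal{R}}(T^*K)\longrightarrow E_{\mathcal{R}}(T^*L)\times (T^*J)^{\sc}\times E_{\mathcal{R}}(T^*K)
\]
is itself surjective, continuous, and proper.  Surjectivity and continuity are immediate factorwise.  For properness one uses the fact that a product of continuous proper maps (between Hausdorff locally compact spaces, which all the factors are) is again proper; properness of $\Phi_{T^*J}$ is Proposition \ref{Gcontract}.  Next I would record the equivariance data: since $\Phi_{T^*J}$ intertwines the Hamiltonian $J\times J$-actions with their moment maps (Section \ref{defcontr}), the map $\widehat{\Phi}$ is $L\times J$-equivariant and preserves the moment maps for the diagonal $L$-action and the $J$-action that define the two symplectic reductions.

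The second step is the descent.  Let $\mu_1,\mu_2$ denote the $L\times J$ moment maps on the source and target of $\widehat{\Phi}$ respectively; by the previous paragraph $\mu_1=\mu_2\circ\widehat{\Phi}$, so $\widehat{\Phi}(\mu_1^{-1}(0))\subset \mu_2^{-1}(0)$ and $\widehat{\Phi}^{-1}(\mu_2^{-1}(0))=\mu_1^{-1}(0)$.  Because $L\times J$ is compact, the two quotient maps $q_i\colon \mu_i^{-1}(0)\to X(\phi),\, X(\psi,\pi)$ are proper, surjective, and continuous, and $\widehat{\Phi}|_{\mu_1^{-1}(0)}$ is $L\times J$-equivariant, so it descends to the desired $\Phi_{\pi,\psi}$.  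Continuity is automatic from the universal property of the quotient topology.  For surjectivity, given $[y]\in X(\psi,\pi)$ pick a representative $y\in\mu_2^{-1}(0)$ and any $x\in\widehat{\Phi}^{-1}(y)$; equivariance of moments places $x\in\mu_1^{-1}(0)$, and $\Phi_{\pi,\psi}([x])=[y]$.

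For properness, let $C\subset X(\psi,\pi)$ be compact.  Since $q_2$ is proper and $L\times J$ compact, $q_2^{-1}(C)\subset \mu_2^{-1}(0)$ is compact; since $\widehat{\Phi}$ is proper on the ambient product, $\widehat{\Phi}^{-1}(q_2^{-1}(C))$ is a compact subset of $\mu_1^{-1}(0)$; and $\Phi_{\pi,\psi}^{-1}(C)=q_1\bigl(\widehat{\Phi}^{-1}(q_2^{-1}(C))\bigr)$ is then compact as the continuous image of a compact set.  The only place where care is required is confirming that $\widehat{\Phi}$ remains proper when restricted to the closed subspace $\mu_1^{-1}(0)$, but properness passes to closed subspaces for free.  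Thus I expect no serious obstacle: the main work has already been done in Proposition \ref{Gcontract}, and the rest is a standard ``reduction of morphisms'' argument using compactness of the structure groups.
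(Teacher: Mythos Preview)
Your proof is correct and follows exactly the same strategy as the paper: reduce to Proposition~\ref{Gcontract} and descend through the $L\times J$-reduction. The paper's own proof is the single sentence ``This is a consequence of Proposition~\ref{Gcontract},'' so you have simply spelled out the standard descent argument that the paper leaves implicit.
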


\begin{proof}
This is a consequence of Proposition \ref{Gcontract}. 
\end{proof}

We let $\mu_J: E_{\mathcal{R}}(K) \to \mathfrak{j}^*$ be the momentum map for the $\mathcal{L}$-action of $J$ used in the construction of the contracted branching variety.  The image of $\mu_J$ is $p_J(P(\psi))$, the $\R_{\geq 0}$ span of the set of weights $\beta$ which appear in the $J$-decomposition of some representation of $K.$  If the principal face in $\Delta_J$ of $E_{\mathcal{R}}(T^*K)$ under this map is the open chamber $\Delta_{J, o}$, then Proposition \ref{Gcontract} implies that the induced map $$\widetilde{\Phi}: \big(T^*J \times E_{\mathcal{R}}(T^*K)\big)\q_{\! 0} J \to \big((T^*J)^{\sc} \times E_{\mathcal{R}}(T^*K)\big)\q_{\! 0} J$$ is a symplectomorphism on a dense open subset $$\big(T^*J^o \times E_{\mathcal{R}}(T^*K)\big)\q_{\! 0} J \subset  \big(T^*J \times E_{\mathcal{R}}(T^*K)\big)\q_{\! 0} J.$$  Proposition \ref{Xsymp} guarantees that this remains the case if the principal face $\Delta_{J, I} \subset \Delta_J$ is not open. In both cases, the image of
$\mu_J$ coincides with the momentum image of the residual left hand side action of $J$.

\begin{proposition}\label{openbranchsymplecto}
The map $\Phi_{\pi, \psi}: X(\phi) \to X(\psi, \pi)$ is a symplectomorphism on a dense, open
subset $X_I(\phi) \subset X(\phi).$
\end{proposition}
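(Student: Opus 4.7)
My plan is to deduce this from the preceding construction of $\widetilde{\Phi}$ by reduction in stages. The two reductions reorganize as
$$X(\phi) = L\,{}_0\!\ql \bigl( E_{\mathcal{R}}(T^*L) \times Y \bigr) \quad \text{and} \quad X(\psi,\pi) = L\,{}_0\!\ql \bigl( E_{\mathcal{R}}(T^*L) \times Y' \bigr),$$
where $Y := \bigl(T^*J \times E_{\mathcal{R}}(T^*K)\bigr)\q_{\! 0} J$ and $Y' := \bigl((T^*J)^{\sc} \times E_{\mathcal{R}}(T^*K)\bigr)\q_{\! 0} J$, and $\Phi_{\pi,\psi}$ is the further descent under the $L$-reduction of the map $\mathrm{id}_{E_{\mathcal{R}}(T^*L)} \times \widetilde{\Phi}$. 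The $L$-action on $Y$ arises from the residual $\mathcal{L}$-action of $J$ on $T^*J$ pulled back via $\pi: L \to J$.

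The paragraph preceding the statement already produces a dense open $J$-stable subset $Y^o \subset Y$ on which $\widetilde{\Phi}$ restricts to a symplectomorphism, coming from the principal subset of $T^*J$ via Propositions \ref{Gcontract} and \ref{Xsymp}. I would take $X_I(\phi)$ to be the image in $X(\phi)$ of $\mu_L^{-1}(0) \cap \bigl(E_{\mathcal{R}}(T^*L) \times Y^o\bigr)$. Openness of $X_I(\phi)$ in $X(\phi)$ is immediate from openness of $Y^o$ in $Y$ together with continuity of $\mu_L$. Granting density (see below), the symplectomorphism statement then follows from the standard principle that symplectic reduction commutes with equivariant symplectomorphisms intertwining momentum maps: the map $\mathrm{id}\times \widetilde{\Phi}|_{Y^o}$ is $L$-equivariant (since $\Phi_{T^*J}$ is $(J\times J)$-equivariant) and preserves $L$-momentum maps, so its $L$-reduction yields a symplectomorphism $\Phi_{\pi,\psi}|_{X_I(\phi)}$ onto its image in $X(\psi,\pi)$.

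The main obstacle I anticipate is the density step, namely verifying that $\mu_L^{-1}(0) \cap \bigl(E_{\mathcal{R}}(T^*L) \times Y^o\bigr)$ remains dense in the full zero level set of $\mu_L$ and survives passage to the $L$-quotient. I would handle this by invoking the Cross Section Theorem applied to the joint $(L\times J\times K)$-Hamiltonian structure on $E_{\mathcal{R}}(T^*L)\times T^*J\times E_{\mathcal{R}}(T^*K)$, together with the openness and $(J\times J)$-invariance of the principal subset of $T^*J$, to ensure that transversality of $\mu_L$ on the principal stratum is preserved and that the $L$-quotient of $Y^o$-fibered momentum-zero locus remains dense.
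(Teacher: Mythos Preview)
Your proposal is correct and follows essentially the same route as the paper: set $Y = \big(T^*J \times E_{\mathcal{R}}(T^*K)\big)\q_{\! 0} J$ and $Y' = \big((T^*J)^{\sc} \times E_{\mathcal{R}}(T^*K)\big)\q_{\! 0} J$, observe that $\Id\times\widetilde{\Phi}$ restricted to $E_{\mathcal{R}}(T^*L)\times Y^o$ is an isomorphism of Hamiltonian $L$-spaces (the paper phrases this as ``Hamiltonian $J$-spaces'' since $L$ acts through $\pi$), and then pass to the $L$-reduction. The paper's proof is only three sentences and does not separately justify density of $X_I(\phi)$ in $X(\phi)$; your explicit flagging of this step and the plan to handle it via the Cross Section Theorem on the ambient product is a reasonable elaboration of what the paper leaves implicit, not a different argument.
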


\begin{proof}
The map $\widetilde{\Phi}$ intertwines the Hamiltonian $\mathcal{L}$-actions on  $\big(T^*J \times E_{\mathcal{R}}(T^*K)\big)\q_{\! 0} J$ and $\big(T^*(J)^{\sc} \times E_{\mathcal{R}}(T^*K)\big)\q_{\! 0} J$.
Therefore,  it follows from the previous discussion that the following map is an isomorphism of Hamiltonian $J$-spaces: 
\begin{equation}
\Id \times \widetilde{\Phi}: E_{\mathcal{R}}(T^*L) \times \big((T^*J)_I \times E_{\mathcal{R}}(T^*K)\big)\q_{\! 0} J \to  E_{\mathcal{R}}(T^*L)\times  \big((T^*J)_I^{\sc} \times E_{\mathcal{R}}(T^*K)\big)\q_{\! 0} J.
\end{equation}
The reductions of these spaces by the diagonal action of $K$ are likewise isomorphic. 
\end{proof}

The subspace $X_I(\phi) \subset X(\phi)$ then inherits the Hamiltonian $\mathbb{T}_L \times \mathbb{T}_J \times \mathbb{T}_K$-action from  $X(\psi, \pi)$, and the momentum map $\mu_{\mathbb{T}_L \times \mathbb{T}_J \times \mathbb{T}_K}: X_I(\phi) \to \Delta(\pi, \psi)$ extends to a surjective, continuous map $\mu_{\mathbb{T}_L \times \mathbb{T}_J \times \mathbb{T}_K} \circ \Phi_{\pi, \psi}: X(\phi) \to \Delta(\pi, \psi).$  The following is a generalization to a chain of maps of compact Lie groups.

\begin{proposition}\label{chaincontract}
Let $$
\begin{CD}
K_0 @>\phi_1>> \ldots @> \phi_n>> K_n.
\end{CD}
$$ be a chain of maps of compact Lie groups.

\begin{enumerate}
\item  There is a surjective, continuous, proper map $$\Phi_{\vec{\phi}}: X(\phi_n \circ \ldots \circ \phi_1) \to X(\vec{\phi}) =  \big(X(\phi_n) \times \ldots \times X(\phi_1)\big)\q_{\! 0}\prod \mathbb{T}_i.$$
\item The map $\Phi_{\vec{\phi}}$ is  a symplectomorphism
on a dense, open subset $X_{\vec{I}}(\phi_n \circ \ldots \circ \phi_1) \subset X(\phi_n \circ \ldots \circ \phi_1)$.
\item There is a surjective, continuous map $\mu_{\prod \mathbb{T}_i} \circ \Phi_{\vec{\phi}}: X(\phi_n \circ \ldots \circ \phi_1) \to \Delta(\vec{\phi}),$
where $\Delta(\vec{\phi})$ is the fiber product cone $\Delta(\phi_1) \times_{\Delta_{1}} \ldots \times_{\Delta_{n-1}} \Delta(\phi_n),$ this is a momentum map for the action $\prod \mathbb{T}_i$ on $X_{\vec{I}}(\phi_n \circ \ldots \circ \phi_1).$
\end{enumerate}
\end{proposition}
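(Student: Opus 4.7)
The strategy is induction on $n$, with the two-factor case (Proposition \ref{openbranchsymplecto}) as the engine. For the inductive step, write the composition $\phi = \phi_n \circ \ldots \circ \phi_1$ as a two-step factorization $\psi \circ \pi$ where $\pi := \phi_{n-1} \circ \ldots \circ \phi_1 \colon K_0 \to K_{n-1}$ and $\psi := \phi_n \colon K_{n-1} \to K_n$. Proposition \ref{openbranchsymplecto} applied to this factorization yields a branching contraction
\begin{equation*}
\Phi_{\pi,\psi} \colon X(\phi) \to (X(\phi_n) \times X(\pi)) \q_{\! 0} \mathbb{T}_{K_{n-1}},
\end{equation*}
and the inductive hypothesis provides a map $\Phi_{\vec\phi'} \colon X(\pi) \to X(\vec\phi')$ for the shorter chain $\vec\phi' = (\phi_1,\ldots,\phi_{n-1})$. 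The $K_{n-1}$-equivariance of $\Phi_{\vec\phi'}$ allows $\mathrm{id} \times \Phi_{\vec\phi'}$ to descend to the $\mathbb{T}_{K_{n-1}}$-reduction at level $0$, and reduction in stages identifies the target with $X(\vec\phi)$. Composing with $\Phi_{\pi,\psi}$ produces the desired map $\Phi_{\vec\phi}$.

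The topological properties (surjectivity, continuity, properness) pass through this construction since they are preserved by composition and by passage to symplectic reductions of equivariant maps, exactly as in the proofs of Propositions \ref{Gcontract} and \ref{extension}. The symplectomorphism locus $X_{\vec I}(\phi)$ is then obtained by intersecting the symplectomorphism locus of $\Phi_{\pi,\psi}$ with the preimage under $\Phi_{\pi,\psi}$ of the subspace descending from $X(\phi_n) \times X_{\vec I'}(\pi)$, where $X_{\vec I'}(\pi)$ is the symplectomorphism locus supplied by the induction hypothesis. For item (3), the inductive hypothesis yields $\Delta(\vec\phi') = \Delta(\phi_1) \times_{\Delta_{K_1}} \ldots \times_{\Delta_{K_{n-2}}} \Delta(\phi_{n-1})$, and Proposition \ref{branch-cone-image} applied to the outer $\mathbb{T}_{K_{n-1}}$-reduction imposes the remaining fiber product relation against $\Delta(\phi_n)$, producing $\Delta(\vec\phi) = \Delta(\vec\phi') \times_{\Delta_{K_{n-1}}} \Delta(\phi_n)$; continuity of $\mu_{\prod \mathbb{T}_i} \circ \Phi_{\vec\phi}$ follows from continuity of its two ingredients.

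The main obstacle will be verifying that the two dense open subspaces --- the one on which $\Phi_{\pi,\psi}$ is a symplectomorphism, and the preimage of the one on which $\mathrm{id}\times \Phi_{\vec\phi'}$ descends to a symplectomorphism --- intersect in a dense open subspace of $X(\phi)$. This ultimately reduces to compatibility between the Cross Section Theorem applied at $T^*K_{n-1}$ (implicit in Proposition \ref{openbranchsymplecto}, via Proposition \ref{Xsymp}) and the chain of Cross Section decompositions produced by the inductive hypothesis at each of the remaining intermediate cotangent bundles. This compatibility should follow by an open-image argument exploiting the properness and equivariance of $\Phi_{\pi,\psi}$, but requires careful bookkeeping of principal faces across the iterated reductions.
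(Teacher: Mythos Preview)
Your inductive approach is correct and yields the same map, but the paper organizes the argument differently. Rather than peeling off one factor and invoking an inductive hypothesis on $X(\pi)$, the paper applies all the universal contractions $\Phi_i \colon T^*K_i \to (T^*K_i)^{\sc}$ \emph{simultaneously}: each $\Phi_i$ is $(K_{i-1}\times K_i)$-equivariant and intertwines momentum maps, so the product map on $E_{\mathcal{R}}(T^*K_0)\times T^*K_1 \times \cdots \times T^*K_{n-1} \times E_{\mathcal{R}}(T^*K_n)$ descends through all the reductions at once, exactly as in the two-factor proof. For the dense open locus, the paper again avoids induction on quotients and instead works upstairs in the single smooth space $E_{\mathcal{R}}(T^*K_n)$: take the principal subspace $X_{I_{n-1}}$ for the $K_{n-1}$-action, then inside that the principal subspace for the $K_{n-2}$-action, and so on. Each step is dense open in the previous one by the Cross Section Theorem, so the nested intersection $X_{\vec I} \subset E_{\mathcal{R}}(T^*K_n)$ is manifestly dense open; the branching open set is then ${K_0}\,{}_{0}{\!\ql}\,\big(E_{\mathcal{R}}(T^*K_0)\times X_{\vec I}\big)$. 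The payoff of the paper's organization is precisely that it sidesteps the bookkeeping you flag: density is checked in a fixed ambient space before any reductions, so there is no need to chase dense opens through preimages under $\Phi_{\pi,\psi}$ and through symplectic quotients. Your route works as well --- the key point being that $\Phi_{\pi,\psi}$ carries its symplectomorphism locus homeomorphically onto a dense open subset of the target, whence the preimage of the inductively supplied dense open remains dense --- but the paper's direct construction makes this step unnecessary.
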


\begin{proof}

For each $i$, the map $\Phi_i: T^*(K_i) \to T^*(K_i)^{\sc}$ is equivariant with respect to $K_{i-1}\times K_i$, and intertwines
the momentum maps for these spaces.  It follows that we can define a continuous, surjective map $$\Phi_{\vec{\phi}}: X(\phi_n \circ \ldots \circ \phi_1) \to \big(X(\phi_n) \times \ldots \times X(\phi_1)\big)\q_{\! 0}\prod \mathbb{T}_i$$ as in the proof of Proposition \ref{openbranchsymplecto}.  

We now build a dense, open subset $X_{\vec{I}}(\phi_n\circ \ldots \circ \phi_1) \subset X(\phi_n\circ \ldots \circ \phi_1)$
on which $\Phi_{\vec{\phi}}$ is a map of Hamiltonian $\mathbb{T}$-spaces.   We let $X_{I_{n-1}} \subset E_{\mathcal{R}}(T^*K_n)$
be the principal subspace for the action of $K_{n-1}$.  This is a smooth, dense, open Hamiltonian $K_{n-1} \times \mathbb{T}_{n-1}$
manifold, and the contraction map $$\Phi_{n-1}: E_{\mathcal{R}}(T^*K_n) \to {K_{n-1}}_{\ 0}{\!\ql}{} \big((TK_{n-1})_{I_{n-1}}^{\sc} \times E_{\mathcal{R}}(T^*K_n)]$$
restricts to an isomorphism of Hamiltonian $(K_{n-1} \times \mathbb{T}_{n-1})$-spaces on this subspace.   Now we let $X_{I_{n-1}, I_{n-2}} \subset X_{I_{n-1}}$ be the principal subspace for the action of $K_{n-1}$.  Continuing this way, we build $X_{\vec{I}} \subset E_{\mathcal{R}}(T^*K_n)$, and we let $X_{\vec{I}}(\phi_n \circ \ldots \circ \phi_1)$ be the implosion ${K_0}_{\ 0}{\!\ql}{} \big(E_{\mathcal{R}}((T^*K)_0)\times  X_{\vec{I}}\big)$. 

By construction, $\Phi_{\vec{\phi}}$ restricts to an isomorphism of Hamiltonian $\mathbb{T}$-spaces of $X_{\vec{I}}(\phi_n \circ \ldots \circ \phi_1)$ onto its image in $\big(X(\phi_n) \times \ldots \times X(\phi_1)\big)\q_{\! 0}\prod \mathbb{T}_i$. This is a symplectomorphism on the $K_0$ principal stratum.  
\end{proof}

\subsection{Contraction of projective branching varieties}\label{projcontractsection}

 The map $\Phi_{\pi, \psi}: X(\pi) \to X(\psi, \pi)$ is $\mathbb{T}_L \times \mathbb{T}_K$
equivariant, and intertwines the momentum maps of this action. It follows that there is a surjective, continuous map
\begin{multline}
\Phi_{\pi, \psi}: X_{\eta, \lambda}(\phi) \to \big(X(\pi) \times X(\phi)\big)\q_{(-\eta, 0, -\lambda)} (\mathbb{T}_L \times \mathbb{T}_J \times \mathbb{T}_K)\\ \cong L _{\ 0}{\!\qlb}{} \big(\mathcal{O}_{-\eta} \times (T^*J)^{\sc} \times \mathcal{O}_{-\lambda}\big)\qb_{\! 0} J.
\end{multline}

\begin{proposition}
The symplectic reduction $L _{\ 0}{\!\qlb}{} \big(\mathcal{O}_{-\eta} \times (T^*J)^{\sc} \times \mathcal{O}_{-\lambda}\big)\qb_{\! 0} J$ can be identified with $X_{\eta, \lambda}(\psi, \pi)$.  
\end{proposition}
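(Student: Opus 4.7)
The plan is to establish the identification by reduction in stages together with the universal property of the imploded cotangent bundle. Unpacking the definition of $X_{\eta,\lambda}(\psi, \pi)$ given just before the proposition, in symplectic language (via Proposition~\ref{affine-branching-reduction}) this is the reduction
\begin{equation*}
X_{\eta, \lambda}(\psi, \pi) \;=\; \bigl(X(\psi) \times X(\pi)\bigr) \q_{(-\eta, 0, -\lambda)} (\mathbb{T}_L \times \mathbb{T}_J \times \mathbb{T}_K),
\end{equation*}
where the $\mathbb{T}_J$-factor acts antidiagonally on the two copies of the intermediate torus present in $X(\psi)$ and $X(\pi)$. Performing the antidiagonal $\mathbb{T}_J$-reduction at level $0$ first yields precisely $X(\psi, \pi)$, so by reduction in stages $X_{\eta, \lambda}(\psi, \pi) = X(\psi, \pi) \q_{(-\eta, -\lambda)} (\mathbb{T}_L \times \mathbb{T}_K)$.

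Next I would substitute the description $X(\psi, \pi) = L _{\ 0}{\!\qlb}{} \bigl(E_{\mathcal{R}}(T^*L) \times (T^*J)^{\sc} \times E_{\mathcal{R}}(T^*K)\bigr) \qb_{\! 0} J$ obtained earlier in this section. The residual $\mathbb{T}_L$-action on $X(\psi, \pi)$ only affects the $E_{\mathcal{R}}(T^*L)$-factor (and commutes with both the $L$- and $J$-actions used to form $X(\psi,\pi)$), and likewise the residual $\mathbb{T}_K$-action only affects $E_{\mathcal{R}}(T^*K)$. Applying reduction in stages in the singular Hamiltonian setting of Sjamaar--Lerman \cite{SjL} to commute these torus reductions past the $L$- and $J$-reductions yields
\begin{equation*}
X_{\eta, \lambda}(\psi, \pi) \;=\; L _{\ 0}{\!\qlb}{} \bigl((E_{\mathcal{R}}(T^*L) \q_{-\eta} \mathbb{T}_L) \times (T^*J)^{\sc} \times (E_{\mathcal{R}}(T^*K) \q_{-\lambda} \mathbb{T}_K)\bigr) \qb_{\! 0} J.
\end{equation*}

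Finally, I would identify each of the outer reduced factors with a coadjoint orbit via the universal property of symplectic implosion from \cite{GJS}: reducing $E_{\mathcal{R}}(T^*L)$ by $\mathbb{T}_L$ at a level in the image of the $\mathbb{T}_L$-momentum map agrees, as a Hamiltonian $L$-space, with the reduction of $T^*L$ (with respect to the $\mathcal{R}$-action) by $L$ itself at the same level; and a direct computation shows $T^*L \q_{-\eta} L \cong L/L_{-\eta} \cong \mathcal{O}_{-\eta}$, with the residual $\mathcal{L}$-action of $L$ recovering the coadjoint action. Applied to both outer factors this converts the previous display into $L _{\ 0}{\!\qlb}{} (\mathcal{O}_{-\eta} \times (T^*J)^{\sc} \times \mathcal{O}_{-\lambda}) \qb_{\! 0} J$, as claimed.

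I do not expect a substantive obstacle: the proof is essentially a bookkeeping exercise building on results already established. The points needing care are the sign conventions converting GIT-linearizations into symplectic reduction levels, and checking that the residual Hamiltonian $L$- and $J$-actions on each reduced factor (the coadjoint action on $\mathcal{O}_{-\eta}$, $\mathcal{O}_{-\lambda}$, and the two intrinsic $J$-actions on $(T^*J)^{\sc}$) match on both sides of the identity so that the outstanding $L$- and $J$-reductions agree.
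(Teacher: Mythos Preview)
Your argument is correct and follows essentially the same approach as the paper: both identify each side as a reduction of the common space $E_{\mathcal{R}}(T^*L)\times (T^*J)^{\sc}\times E_{\mathcal{R}}(T^*K)$ by the product group $\mathbb{T}_L\times L\times J\times \mathbb{T}_K$, performed in different orders, and then appeal to reduction in stages (Sjamaar--Lerman) together with Kempf--Ness. The paper's proof is much terser, simply stating this; your version spells out the intermediate steps, including the identification $E_{\mathcal{R}}(T^*L)\q_{\eta}\mathbb{T}_L\cong\mathcal{O}_{-\eta}$ (which is immediate from the description of points in $E_{\mathcal{R}}(T^*L)$ and needs no detour through $T^*L\q L$, though your route also works).
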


\begin{proof}
We may view both spaces as a reduction of $E_{\mathcal{R}}(T^*L)\times (T^*J)^{\sc}\times E_{\mathcal{R}}(T^*K)$ by $\mathbb{T}_L \times L\times J\times \mathbb{T}_K$ at momentum level $(\eta, 0, 0, \lambda),$ so this is an application of the results of Sjamaar-Lerman \cite{SjL} and Kempf-Ness \cite{KN}. 
\end{proof}

We can perform the reduction of the previous proposition in stages, as a consequence the momentum image of residual $\mathbb{T}_J$ action on $X_{\eta, \lambda}(\psi, \pi)$ is the set of all triples of the form $(\eta, \beta, \lambda) \in \Delta(\psi)\times_{\Delta_J}\Delta(\pi)$, which is the fiber polytope of $\pi_L\times\pi_K$ over $(\eta, \lambda).$  

\begin{proposition}\label{projcontract}
The map $\Phi_{\pi, \psi}: X(\phi)\q_{-\lambda}\mathbb{T}_K \to X(\psi, \pi) \q_{-\lambda}\mathbb{T}_K$
is a symplectomorphism on a dense open subset.  If $\eta$ lies in the image of the $J$-principal stratum 
of  $\mathcal{O}_{-\lambda}$ under $d\pi^*$, then $$\Phi_{\pi, \psi}: X_{\eta, \lambda}(\phi) \to \big(X(\pi) \times X(\psi)\big)\qb_{\! (-\eta, 0, -\lambda)} (\mathbb{T}_L \times \mathbb{T}_J \times \mathbb{T}_K)$$ is a symplectomorphism on a dense, open subset.
\end{proposition}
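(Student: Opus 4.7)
The plan is to obtain Proposition~\ref{projcontract} from Proposition~\ref{openbranchsymplecto} by reducing the affine contraction $\Phi_{\pi,\psi}\colon X(\phi)\to X(\psi,\pi)$ in stages, first by $\mathbb{T}_K$ at level $-\lambda$ and then (for the second statement) by $\mathbb{T}_L$ at level $-\eta$. First I would verify that $\Phi_{\pi,\psi}$ is $(\mathbb{T}_L\times \mathbb{T}_K)$-equivariant and intertwines the corresponding momentum maps. This is immediate from the construction in Section~\ref{secondrefsection}: the $\mathbb{T}_L$- and $\mathbb{T}_K$-actions are the residual right actions on the factors $E_{\mathcal{R}}(T^*L)$ and $E_{\mathcal{R}}(T^*K)$, and $\Phi_{\pi,\psi}$ is the identity on these factors, touching only the $T^*J$ factor via the universal contraction $\Phi_{T^*J}$. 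Consequently $\Phi_{\pi,\psi}$ descends to a surjective, continuous, proper map between the reductions $X(\phi)\q_{(-\eta,-\lambda)}(\mathbb{T}_L\times\mathbb{T}_K)$ and $X(\psi,\pi)\q_{(-\eta,-\lambda)}(\mathbb{T}_L\times\mathbb{T}_K)$, and similarly for reduction by $\mathbb{T}_K$ alone.

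For the first assertion, note that the open subset $X_I(\phi)\subset X(\phi)$ of Proposition~\ref{openbranchsymplecto} on which $\Phi_{\pi,\psi}$ is a symplectomorphism is $(\mathbb{T}_L\times \mathbb{T}_K)$-stable, because $X_I(\phi)$ is cut out by the requirement that the $J$-principal stratum in $E_{\mathcal{R}}(T^*K)$ is hit, and this stratum is $\mathbb{T}_K$-stable (the $\mathbb{T}_K$-action commutes with the $J$-action). Therefore $X_I(\phi)\cap \mu_{\mathbb{T}_K}^{-1}(-\lambda)$ is an open subset of the level set. It is non-empty provided $-\lambda$ is hit by $\mu_{\mathbb{T}_K}$ on $X_I(\phi)$, which holds because $\mu_{\mathbb{T}_K}$ is open onto its image on the top stratum. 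Since the level sets of $\mu_{\mathbb{T}_K}^{-1}(-\lambda)$ in $X(\phi)$ are connected on the top stratum, openness forces density, and the standard fact that symplectic reduction of a momentum-preserving, equivariant symplectomorphism on a saturated dense open subset yields a symplectomorphism on a dense open subset of the reduction (see \cite{SjL}) gives the first claim.

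For the second assertion I would further reduce by $\mathbb{T}_L$ at level $-\eta$, so that by reduction in stages we obtain the map $\Phi_{\pi,\psi}\colon X_{\eta,\lambda}(\phi)\to X_{\eta,\lambda}(\psi,\pi)$. The crucial point is to verify that $X_I(\phi)$ meets the combined level set $\mu_{\mathbb{T}_L\times \mathbb{T}_K}^{-1}(-\eta,-\lambda)$ in a non-empty, hence dense, open subset of that level set. Unwinding the definitions via Proposition~\ref{affine-branching-reduction}, a point of $X(\phi)$ above $(-\eta,-\lambda)$ is represented by $\big((p,u),(k,v)\big)$ with $u\in \mathcal{O}_{-\eta}$, $v\in \mathcal{O}_{-\lambda}$, and $pu+d\phi^*(kv)=0$; lying in $X_I(\phi)$ amounts to requiring $kv$ (or rather $d\psi^*(kv)$) to land in the $J$-principal stratum, and compatibility with the $\mathbb{T}_L$-level forces $-\eta=-d\pi^*(d\psi^*(kv))$ up to the $L$-action. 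This non-emptiness condition is precisely the hypothesis that $\eta$ lies in the $d\pi^*$-image of the $J$-principal stratum of $\mathcal{O}_{-\lambda}$. With this in hand the same reduction argument as above applies.

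The main obstacle I anticipate is the careful verification of the density statement: one must check that the intersection of $X_I(\phi)$ with the level set of the reduced momentum map is not only non-empty but also dense, which requires controlling how the principal stratum interacts with the successive level sets and, for the second statement, interpreting the slightly subtle condition on $\eta$ in terms of the decomposition of $\mathcal{O}_{-\lambda}$ into $J$-strata. Once density is in place, the descent of symplectomorphisms under reduction is purely formal.
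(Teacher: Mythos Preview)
Your overall strategy—descending from the affine contraction $\Phi_{\pi,\psi}\colon X(\phi)\to X(\psi,\pi)$ by reducing in stages with respect to $\mathbb{T}_K$ and then $\mathbb{T}_L$—is sound, and the equivariance and intertwining of momentum maps are indeed immediate. However, your density argument has a genuine gap. You claim that because $X_I(\phi)\cap\mu_{\mathbb{T}_K}^{-1}(-\lambda)$ is non-empty and open in a connected level set, ``openness forces density''; but a non-empty open subset of a connected space need not be dense. Likewise, there is no ``standard fact'' in \cite{SjL} guaranteeing that reduction of a symplectomorphism defined on a dense open set yields a symplectomorphism on a dense open set of the reduction: for this one must know that the dense open set meets each level set densely, which is exactly the point at issue.

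The paper avoids this difficulty by reversing the order of operations. Rather than intersecting $X_I(\phi)$ with a level set, it first performs the $\mathbb{T}_K$-reduction, recognising that $E_{\mathcal{R}}(T^*K)\q_{-\lambda}\mathbb{T}_K\cong\mathcal{O}_{-\lambda}$, and then applies the general contraction theory (Propositions~\ref{extension} and~\ref{Xsymp}) directly to the Hamiltonian $J$-space $\mathcal{O}_{-\lambda}$. Density of the $J$-principal subspace of $\mathcal{O}_{-\lambda}$ is then immediate from the Cross Section Theorem, with no need to track how an open set upstairs intersects a level set. This is also precisely the ingredient that would repair your argument: the intersection $X_I(\phi)\cap\mu_{\mathbb{T}_K}^{-1}(-\lambda)$ is dense in the level set \emph{because} its image in $\mathcal{O}_{-\lambda}$ is the $J$-principal stratum, which is dense. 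For the second assertion the same reasoning applies, with the hypothesis on $\eta$ ensuring that the further $\mathbb{T}_L$-level set still meets the $J$-principal stratum; you identified this correctly.
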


\begin{proof}
Consider the symplectic horospherical contraction of $\mathcal{O}_{-\lambda}$ with respect to the action of $\psi: J \to K$:
\begin{equation}
\Phi_{ \eta}: \mathcal{O}_{-\lambda} \cong J \ql \big(T^*(J) \times \mathcal{O}_{-\lambda}\big) \to K \backslash \big(T^*(J)^{\sc} \times \mathcal{O}_{-\lambda}\big).
\end{equation}
This map is a symplectomorphism on the $J$-principal subspace of $\mathcal{O}_{-\lambda}.$  
\end{proof}

\subsection{The residual $\mathbb{T}_J$ action}\label{residual}

The contraction map $\Phi_{\psi, \pi}$ constructed on the affine branching variety $X(\phi)$ produces a symplectomorphism $$\Phi_{\psi, \pi}: X_I(\phi) \cong L _{\ 0}{\!\qlb}{}\big(E_{\mathcal{R}}(T^*L) \times (T^*J)_I \times E_{\mathcal{R}}(T^*K)\big)\qb_{\! 0} J.$$  The space on the right hand side carries a Hamiltonian action by $\mathbb{T}_J$, we derive a formula to compute this action on $X_I(\phi).$

The map $\Phi_{\psi, \pi}$ takes a  class $\big((p, w),(k, v)\big)$ to the equivalence class of $\big((p, w),(1, d\psi^*(k v)),(k, v)\big)$, where $d\psi^*: \mathfrak{k}^* \to \mathfrak{j}^*$ is induced by $\psi: J \to K.$  Following Section \ref{gen}, we see that an element $t \in \mathbb{T}_J$ acts on $\big((p, w),(1, d\psi^*(k v)),(k, v)\big)$ as follows: 
\begin{equation}
t \star \big((p, w),(1, d\psi^*(kv)),(k, v)\big) = \big((p, w),(h^{-1}t h,  d\psi^*(k v)),(k, v)\big).
\end{equation}
Here $h \in J$ is any element such that $h  d\psi^*(k v) \in \Delta_J.$  Pulling this back through the isomorphism $\Phi_{\psi, \pi}$, we obtain the following formula for that $\mathbb{T}_J$ action:
\begin{equation}
t \star \big((p, w),(k, v)\big) = \big((p, w),(\psi (h)^{-1}\psi(t)^{-1}\psi(h) k, v)\big).
\end{equation}

\begin{example}[Polygons, bending flows, and $\SU(2)$ diagonal branching]
We return to the case of $\SU(2)$ diagonal branching and the projective branching varieties $X_{\vec{r}}(\delta_n)$.  As a consequence  of Propositions \ref{chaincontract} and \ref{projcontract} there is a contraction map $\Phi_{\tree}: X_{\vec{r}}(\delta_n) \to X_{\vec{r}}(\tree)$ and an associated integrable system in $X_{\vec{r}}(\delta_n)$ associated to each trivalent tree $\tree$ with $n$ leaves.  As explained in \cite{HMM}, these integrable systems have a natural interpretation as operations on Euclidean polygons.  Each tree $\tree$ can be viewed as the dual complex to a triangulation of a model $n$-gon, where each edge $e \in E(\tree)$ is associated to a specific diagonal $d_e$. The action of $\U(1)$ corresponding to $e$ on a point $p$ in (a dense open subset of) $X_{\vec{r}}(\delta_n)$ can be interpreted as bending the polygon associated to $p$ along the diagonal $d_e$.  A similar integrable system is described on the Grassmannian variety in \cite{HMM}. 
\end{example}

\subsection{The branching gradient flow}

In both the projective and affine cases we have discussed the existence of an explicit branching contraction map $\Phi_{\vec{\phi}}$ associated to a chain of reductive group maps $\vec{\phi}$. Now we observe that this map is the flow at time $1$ of a gradient-Hamiltonian vector field on the flat family defined by the branching valuations  $\vec{h} \in C(\vec{\phi})$ (see Proposition \ref{chainbranch}).  

\begin{proposition}\label{branchflow}
Let $X$ be a projective or affine branching variety associated to a map $\phi: G_0 \to G_n$ of reductive groups equipped with a K\"ahler structure from an embedding in $\mathbb{P}^M$ or $\mathbb{A}^N$, respectively.   Let $\vec{\phi}$ be a chain of maps, let $\vec{h}$ be an interior point in $C(\vec{\phi})$, and finally let $E_{\vec{h}}$ be the associated $1$-parameter family.   The flow at time $1$ of the gradient Hamiltonian vector field on this family extends to give the branching contraction map $\Phi_{\vec{\phi}}.$
\end{proposition}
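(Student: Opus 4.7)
The strategy is to lift the problem from the branching variety $X$ to the ambient product of Vinberg monoids, apply the results of Section~\ref{gradientflowmonoid} and Theorem~\ref{lastref} factor by factor, and then descend the resulting flow through the GIT/symplectic quotient presentation of $X$. This mirrors, in a more elaborate setting, the deduction of Theorem~\ref{lastref} from the case of $G$ itself.

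Recall from Proposition~\ref{chainbranch} that $E(\vec{\phi})$ is constructed as $\big[G_0 \q U_0 \times \prod_{i=1}^{n-1} S_{G_i} \times G_n \q U_n\big]\q \prod_{i=1}^{n-1} G_i$, a flat family over $\prod_i \A_{G_i}$, and that $E_{\vec{h}}$ is obtained by base change along the monoid morphism $\A^1 \to \prod_i \A_{G_i}$ determined by $\vec{h}$. The K\"ahler structure on $X$, and on every fiber of $E_{\vec{h}}$, is inherited from a product matrix embedding, using the embeddings of each $S_{G_i}$ discussed in Section~\ref{gradientflowmonoid} together with the given embedding of the endpoint flag or affine variety.

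First I would lift the gradient-Hamiltonian vector field on $E_{\vec{h}}$ to a vector field on $\A^1 \times_{\prod \A_{G_i}} \big[G_0 \q U_0 \times \prod_{i=1}^{n-1} S_{G_i} \times G_n \q U_n\big]$. Because the defining function $\pi$ depends only on the $S_{G_i}$-coordinates, and thanks to the rescaling of Lemma~\ref{compvectfields}, this lift decomposes as a sum of gradient-Hamiltonian vector fields, one on each $S_{G_i}$ factor, while fixing the $G_0 \q U_0$ and $G_n \q U_n$ factors. By Corollary~\ref{flowresult}, the time-$1$ flow on the $i$-th factor extends continuously to the contraction $G_i \to \As(G_i)$, and by Lemma~\ref{2incarn} this coincides with the universal symplectic contraction $\Phi_{T^*K_i}: T^*K_i \to (T^*K_i)^{\sc}$. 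The full lifted flow is $K_0 \times \prod K_i \times K_n$-invariant by Lemma~\ref{eerste} and preserves the relevant momentum maps, so it descends to the symplectic/GIT quotient defining $E_{\vec{h}}$.

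The remaining and most delicate step, which I expect to be the main obstacle, is to identify the descended time-$1$ map on $X$ with the branching contraction $\Phi_{\vec{\phi}}$ from Proposition~\ref{chaincontract}. For this I would argue as in the proof of Theorem~\ref{lastref}: embed $X$ symplectically into the zero level set of the diagonal $\prod K_i$-moment map in the ambient space using the canonical sections $s_{T^*K_i}$ of the right $K_i$-moment maps on the $S_{G_i}$ factors. Under this embedding, the descended time-$1$ map is precisely the one induced by applying $\Phi_{T^*K_i}$ on each factor and then passing to the quotient. But $\Phi_{\vec{\phi}}$ was itself assembled in Proposition~\ref{chaincontract} by inserting exactly these symplectic contractions $\Phi_i$ factor-by-factor into the reduction presentation of $X$, so the two maps coincide, by continuity, on all of $X$. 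The projective case follows by taking the additional GIT/symplectic quotient by $T_{G_0} \times T_{G_n}$ at the character $\chi_{\eta, \lambda}$, since both the flow and its time-$1$ map are equivariant for this torus and the gradient-Hamiltonian flow commutes with reduction at non-zero levels via the shifting trick.
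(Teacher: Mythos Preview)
Your proposal is correct and follows essentially the same route as the paper's proof. Both arguments lift to the ambient space $\bar{X}\times\prod_{i} S_{G_i}$ (with $\bar{X}$ the pre-quotient space $G_0\q U_0\times G_n\q U_n$ or $\mathcal{O}_\eta\times\mathcal{O}_\lambda$), invoke the $\prod_i(K_i\times K_i)$-invariance of the gradient-Hamiltonian flow on the base-changed product of Vinberg monoids to descend through the quotient, and then identify the resulting time-$1$ map with $\Phi_{\vec{\phi}}$ via the section argument from Theorem~\ref{lastref}; the paper's proof is simply terser and treats the affine and projective cases simultaneously by varying $\bar{X}$, whereas you handle the projective case by a further torus reduction.
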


\begin{proof}
The branching variety $X$ is $G_0$ quotient of a product $G_0\q U_0\times G_n\q U_n$ or $\mathcal{O}_{\eta}\times \mathcal{O}_{\lambda}$ which we denote $\bar{X}.$  We trace the steps of Theorem \ref{lastref} to embed $\bar{X}$ in $\bar{X}\times \prod_{i =1}^n S_{G_i}$ and pass to the base change $\mathbb{A}^1\times_{\prod_{i =1}^n G_i} (\bar{X}\times \prod_{i =1}^n S_{G_i})$.  The flow of the gradient-Hamiltonian vector field on $\mathbb{A}^1\times_{\prod_{i =1}^n G_i} \prod_{i =1}^n S_{G_i}$ is invariant under the action on this family by $\prod_{i =1}^m K_i \times K_i$, where $K_i \subset G_i$ is the compact part, so this is also the case on the non-singular locus of $\mathbb{A}^1\times_{\prod_{i =1}^n G_i} (\bar{X}\times \prod_{i =1}^n S_{G_i}).$  As a consequence, this flow descends and gives a map which coincides with $\Phi_{\vec{\phi}}.$  
\end{proof}

\section{The Gel'fand-Tsetlin system}\label{GTsystem}

We reproduce the Gel'fand-Tsetlin system in a coadjoint orbit of $\U(n)$ by applying our branching construction.  What follows is an application of our symplectic contraction operation to the following chain of groups:

$$
\begin{CD}
\U(1) @>i_1>> \U(2) @>i_2>> \ldots @>i_{n-1}>> \U(n-1) @>i_{n-1}>> \U(n),\\
\end{CD}
$$

\noindent
where $i_k:\U(k) \to \U(k+1)$  is the upper left diagonal inclusion of $\U(k)$ in $\U(k+1)$ (this construction goes through without change for any chain of closed subgroups as in \cite[Section 5]{GSt}).  We choose the standard Weyl chamber $\Delta_k$ for each $\U(k)$, this is identified with the diagonal matrices in $\mathfrak{u}(k)^*$ with weakly decreasing entries down the diagonal.

A coadjoint orbit $\mathcal{O}_{\lambda} \subset \mathfrak{u}(n)^*$ can be considered as a projective branching space for the inclusion $1 \subset \U(n)$, it can be identified with the following symplectic reduction:

\begin{equation}
\big(T^*\U(1) \times \ldots \times T^*\U(n-1)\times \mathcal{O}_{\lambda}\big)\qb_{\! 0} \prod_{k = 1}^{n-1} \U(k).\\
\end{equation}

\noindent
Proposition \ref{chaincontract} then produces a contraction map:
\begin{equation}
\Phi: \mathcal{O}_{\lambda} \to \mathcal{O}_{\lambda}^{\sc} = \big((T^*\U(1))^{\sc} \times \ldots \times (T^*\U(n-1))^{\sc}\times \mathcal{O}_{\lambda}\big)\q_{\! 0}\prod_{k = 1}^{n-1} \U(k).
\end{equation}

The map $\Phi$ is a symplectomorphism on the subset $\mathcal{O}_{\lambda}^o \subset \mathcal{O}_{\lambda}$
of points $p$ such that the $\U(k)$-coadjoint orbit of $di_k^*\circ \ldots \circ di_{n-1}^*(p)$ intersects the principal face for the $\U(k)$ action on $\mathcal{O}_{\lambda}$ defined through $i_{n-1}\circ \ldots \circ i_k.$ 

By Proposition \ref{chaincontract} the space $\mathcal{O}_{\lambda}^{\sc}$ has a Hamiltonian action of $\mathbb{T} = \prod_{k = 1}^{n-1} \mathbb{T}_k.$  Any element $t \in \mathbb{T}_k \subset \mathbb{T}$ acts on $p \in \mathcal{O}_{\lambda}$ by the formula $t\star p = hth^{-1}p$, where $h \in \U(k)$ moves $di_k^*\circ \ldots \circ di_{n-1}^*(p)$ into the Weyl chamber $\Delta_k$.  For the chain of maps $i_1, \ldots, i_{n-1}$, this is successive diagonalization of each image $di_k^*\circ \ldots \circ di_{n-1}^*(p)$, and the condition that the resulting diagonal matrix be in the Weyl chamber requires that the entries are weakly decreasing down the diagonal (compare with \cite[Section 5]{GSt}).  The collection of these diagonalized elements defines the image of $p$ under the momentum map $\mu_{\mathbb{T}}: \mathcal{O}_{\lambda} \to \Delta_{\lambda}(\vec{i}) \subset \prod_{k =1}^{n-1} \Delta_k$.

\begin{definition}
A Gel'fand-Tsetlin pattern $\bold{x}$ of size $n$ is a triangular array of $\binom{n}{2}$ numbers $x_{i,j}$, $i \leq j$ such that $x_{i,j} \geq x_{i,j-1} \geq x_{i+1, j}$ for all $i, j$, see Figure \ref{GTpattern}.  If we fix a top row the set of all such patterns is called the Gel'fand-Tsetlin polytope (for that top row),
\end{definition}

\begin{figure}[htbp]
\begin{tikzpicture}
\draw [->] (-7,1.2) -- (-7.8, 0);
\draw [->] (-8, 0) -- (-8.8, 1.2);
\draw [->] (-6,2.8) -- (-6.8, 1.6);
\draw [->] (-7,1.6) -- (-7.8, 2.8);
\draw [->] (-8,2.8) -- (-8.8, 1.6);
\draw [->] (-9,1.6) -- (-9.8, 2.8);
\draw [->] (-5,4.4) -- (-5.8, 3.2);
\draw [->] (-6,3.2) -- (-6.8, 4.4);
\draw [->] (-7,4.4) -- (-7.8, 3.2);
\draw [->] (-8,3.2) -- (-8.8, 4.4);
\draw [->] (-9,4.4) -- (-9.8, 3.2);
\draw [->] (-10,3.2) -- (-10.8, 4.4);
\node at (-7.9, -.2) {$x_{1, 1}$};
\node at (-6.9, 1.4) {$x_{2, 2}$};
\node at (-8.9, 1.4) {$x_{1, 2}$};
\node at (-5.9, 3) {$x_{3, 3}$};
\node at (-7.9, 3) {$x_{2, 3}$};
\node at (-9.9, 3) {$x_{1, 3}$};
\node at (-4.9, 4.6) {$x_{4, 4}$};
\node at (-6.9, 4.6) {$x_{3, 4}$};
\node at (-8.9, 4.6) {$x_{2, 4}$};
\node at (-10.9, 4.6) {$x_{1,4}$};
\end{tikzpicture}
\caption{A Gel'fand-Tsetlin pattern of size $4$.}\label{GTpattern}
\end{figure}
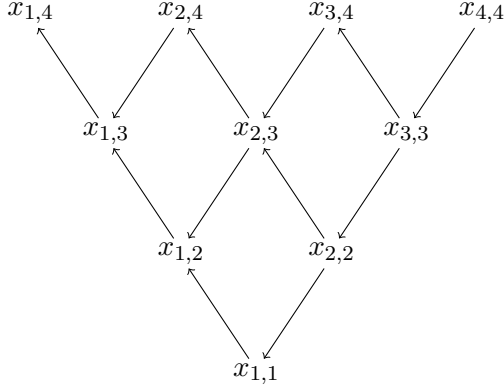

\begin{proposition}
The image $\Delta_{\lambda}(\vec{i})$ is the Gel'fand-Tsetlin polytope with top row $\lambda$.  If $\lambda$ is integral, $\mathcal{O}_{\lambda}^{\sc}$ is the projective toric variety associated to $\Delta_{\lambda}(\vec{i}).$  
\end{proposition}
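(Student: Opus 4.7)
The plan is to first identify $\Delta_\lambda(\vec{i})$ via Proposition \ref{chaincontract}(3) combined with the Pieri rule, and then to recognize $\mathcal{O}_\lambda^{\sc}$ as a toric variety by computing its homogeneous coordinate ring.

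First I apply the Pieri rule recalled in the introduction: for each inclusion $i_k: \U(k) \hookrightarrow \U(k+1)$, the multiplicity $\dim\Hom_{\U(k)}(M_\eta, M_\mu)$ equals $1$ exactly when the entries of $\eta$ and $\mu$ interlace, and $0$ otherwise. Hence $\Delta(i_k) \subset \Delta_k\times\Delta_{k+1}$ is cut out by the interlacing inequalities $\mu_1 \ge \eta_1 \ge \mu_2 \ge \cdots \ge \eta_k \ge \mu_{k+1}$. By Proposition \ref{chaincontract}(3), $\mu_\mathbb{T}(\mathcal{O}_\lambda)$ is the fiber over $\lambda\in\Delta_n$ of $\Delta(i_1)\times_{\Delta_2}\cdots\times_{\Delta_{n-1}}\Delta(i_{n-1})$, which unwinds to exactly the set of triangular arrays $(x_{i,j})_{1\le i\le j\le n}$ with $x_{i,n}=\lambda_i$ and consecutive rows interlacing -- precisely the Gel'fand-Tsetlin polytope with top row $\lambda$.

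For the toric variety assertion (with $\lambda$ integral), I will compute the homogeneous coordinate ring of $\mathcal{O}_\lambda^{\sc}=X_\lambda(\vec{i})$ from Proposition \ref{chainbranch}. Combining the decomposition of $\C[X(\vec{i})]$ into $\prod \mathbb{T}_k$-isotypical components with the $\mathbb{T}_n$-reduction at character $k\lambda$, the degree-$k$ piece is
\begin{equation*}
\bigoplus_{\mathbf{x}} \bigotimes_{j=1}^{n-1} \Hom_{\U(j)}\!\bigl(M_{(x_{1,j},\ldots,x_{j,j})},\; M_{(x_{1,j+1},\ldots,x_{j+1,j+1})}\bigr),
\end{equation*}
with $\mathbf{x}$ ranging over integer Gel'fand-Tsetlin patterns with top row $k\lambda$. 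By the Pieri rule each tensor factor is one-dimensional, so the whole graded ring is the semigroup algebra $\C[P_\lambda]$, where $P_\lambda := \bigsqcup_{k\ge 0}\{\text{integer GT patterns with top row } k\lambda\}$ is $\Z$-graded by $k$.

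The final step is to recognize $\Proj\,\C[P_\lambda]$ as the projective toric variety with polytope $\Delta_\lambda(\vec{i})$. For this I need to check that $P_\lambda$ is saturated in its ambient lattice: but $P_\lambda$ is by definition the full set of lattice points in the cone over $\{1\}\times\Delta_\lambda(\vec{i})$, since the interlacing inequalities have integer coefficients and the top-row condition $x_{i,n}=k\lambda_i$ is integer-linear. Hence $P_\lambda$ is saturated (and therefore normal), so $\Proj\,\C[P_\lambda]$ is the claimed projective toric variety; the $\mathbb{T}$-action coming from Proposition \ref{chaincontract} coincides with its standard torus action by construction. I do not expect any serious obstacle; the only mild points are keeping track of the Pieri multiplicities and the saturation check, both of which are immediate from the interlacing description.
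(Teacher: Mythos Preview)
Your proposal is correct and follows essentially the same approach as the paper: both arguments use the Pieri rule to show that the relevant multiplicity spaces are one-dimensional, identify the resulting semigroup with the (cone over the) Gel'fand--Tsetlin patterns, and invoke normality/saturation to conclude the toric structure. The only difference is organizational: the paper first works with the affine branching variety $X(\vec{i})\cong \GL_n(\C)\q U$, shows its coordinate ring is the affine semigroup algebra of the full GT cone $\Delta(\vec{i})$, and then passes to the projective quotient at level $\lambda$, whereas you work directly with the homogeneous coordinate ring of $\mathcal{O}_\lambda^{\sc}$. One small point: your citation of Proposition~\ref{chaincontract}(3) for the momentum image of $\mathcal{O}_\lambda$ is slightly off, since that proposition concerns the affine branching variety; strictly speaking you should combine it with the reduction at $\lambda$ as in Section~\ref{projcontractsection}, but the content of the argument is unaffected.
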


\begin{proof}
We will perform the contraction by the chain $\vec{i}$ on the imploded cotangent bundle $T^*\U(n)$, which we identify with $\GL_n(\C) \q U.$ Propositions
\ref{chaincontract}, \ref{chainbranch} and \ref{branchflow} imply that there is a degeneration and a contraction of $\GL_n(\C) \q U,$ which we can consider to be $X(i)$ for $i: \C^* \to \GL_n(\C)$, to the variety $X(\vec{i})$. The latter is an affine variety with the following multigraded coordinate algebra:

\begin{equation}
\C[X(\vec{i})] = \bigoplus_{\lambda_i \in \mathfrak{X}^+_{GL_i(\C)}} \Hom(M_{\lambda_1}, M_{\lambda_2}) \otimes \ldots \otimes \Hom(M_{\lambda_{n-1}}, M_{\lambda_n}).\\
\end{equation}

\noindent
Here $\lambda_i$ is an $i$-tuple of weakly decreasing integers, in particular $\lambda_1$ is a single number representing a character of $\C^*$. As a consequence of the Pieri rule, each multigraded summand of this algebra has dimension $1$ or $0$.  As it must be finitely generated, it follows that $\C[X(\vec{i})]$ is the affine semigroup algebra associated to the monoid $P(\vec{i})$.  This branching rule also implies that a tuple $\vec{\lambda}$ is in $P(\vec{i})$ if and only if its entries interlace, it follows that $\Delta(\vec{i})$ is the cone of Gel'fand-Tsetlin patterns of size $n$.  Since this cone is normal, $X(\vec{i})$ is an affine toric variety, the residual $\mathbb{T}\times \mathbb{T}_n$ action on $X(\vec{i})$ has momentum image equal to $\Delta(\vec{i})$, and the contraction map $\Phi: \GL_n(\C) \q U \to X(\vec{i})$ produces a dense open integrable system in $\GL_n(\C) \q U$ with momentum image a dense open subset of $\Delta(\vec{i}).$  Now it follows that $\Delta_{\lambda}(\vec{i})$ is the polytope of Gel'fand-Tsetlin patterns with top row equal to $\lambda$, and $\mathcal{O}_{\lambda}^{\sc}$ is the associated projective toric variety when $\lambda$ is integral. 
\end{proof}

By Proposition \ref{chainbranch}, an integral choice $\vec{h}$ from the interior of the cone of valuations $C(\vec{i})$ defines a one parameter flat degeneration $E_{\vec{h}}(\lambda)$ of $\mathcal{O}_{\lambda}$ to $\mathcal{O}_{\lambda}^{\sc}$. Finally, by Proposition \ref{branchflow} the flow at time $1$ of the gradient-Hamiltonian vector field on this family extends to give the map $\Phi.$

\appendix 
\section{K\"ahler structures on $E_{\mathcal{L}}(T^*K)$ }  \label{appendix}
In \cite[\S 6]{GJS} it was shown that, for $K$ semi-simple and simply connected, $E_{\mathcal{R}}(T^*K)$ is isomorphic (as a Hamiltonian $K$-space) to $G\q U = \Spec(G)^U$, also known as the basic affine space, or the affinization of $G/U$.  We show here that this result is in fact true for arbitrary compact connected Lie groups $K$, by essentially the same proof as in \cite{GJS} -- this appendix should therefore just be seen as a minor comment.  Of course the corresponding statement also holds for $E_{\mathcal{L}}(T^*K)$.

In order to make this comparison, one needs to endow $G\q U$ with a K\"ahler structure.  This is done by embedding it in a Hermitian vector space.  Choose a finite set $\Pi$ of generators of $\mathfrak{X}_G^+$ (in \cite{GJS} these are required to be minimal, but we will not demand this), and look at $$E=\bigoplus_{\varpi\in \Pi} M_{\varpi}.$$  By using the Borel-Weil theorem, one can think of $E^*$ as a subspace of $\mathbb{C}[G]^U$, which generates the latter as a ring, and hence one has a $G$-equivariant embedding $G\q U\hookrightarrow E$.  If one chooses  highest-weight vectors $v_{\varpi}\in M_{\varpi}$ for all elements in $\varpi\in\Pi$, this embedding is determined by sending the identity (mod $U$) to $\sum v_{\varpi}$.  One should now look at $X=\overline{T\sum v_{\varpi}}\subset E$ where $T$ acts through the weights $-\varpi$.  This is an affine toric variety (in fact contained in the invariant subspace $E^U$).

Moreover, one can equip $E$ with the unique Hermitian inner product that is $K$-invariant, satisfies $|\!| v_{\varpi} |\! |=1$, and is real-valued on all real linear combinations of the $v_{\varpi}$.  This gives $G\q U$ the structure of Hamiltonian $K$-space, and makes $X$ into a (likely singular) symplectic toric space for the compact torus $\mathbb{T}$.  In particular, it will have a momentum map onto $-\Delta^{\vee}$, and moreover, very important for us, a section of this momentum map $-\Delta^{\vee}\rightarrow X$.  We shall denote minus composed with this section as $s$ -- the minus sign comes from the use of the $\mathcal{R}$-action, we can think of this $s$ as a section of the momentum map for the action of the torus by inverses.

If $K$ is semi-simple and simply connected, and moreover the $\varpi$ are the fundamental weights (the only minimal choice  of generators for $\mathfrak{X}_G^+$), then $X=E^U$ (which is of course an affine space), and one can write down an explicit formula for this section $s$ -- this is given in \cite[Formula (6.6)]{GJS}.  But indeed such a section always exists (though writing down an explicit formula for it will be harder in general).  It is perhaps more familiar to algebraic geometers, who know its image as the \emph{non-negative part}, $X_{\geq 0}$, of $X$ (see e.g. \cite[Theorem 12.2.5 and Exercise 12.2.8]{Coxandco}).  Of relevance for us is that it is a homeomorphism onto its image, which is a diffeomorphism restricted to each face (whose target is considered as smooth in its stratum).  Moreover, with the choices we have made, its image will lie in the real linear span of the $v_{\varpi}$.

It is now a matter of observing that with this section $s$, even without a formula for it, the reasoning of \cite[\S 6]{GJS} still carries through.  

First, note that $s$ extends uniquely to a $(K\times \mathbb{T})$-equivariant map $\mathcal{F}: K\times \Delta^{\vee}\rightarrow G\q U \subset E$.

\begin{theorem}[{\cite[Proposition 6.8]{GJS}}]
Let $K$ be a compact connected Lie group.  We have \begin{enumerate}
\item \label{een} $\mathcal{F}$ induces a closed embedding $f: E_{\mathcal{R}}(T^*K)\hookrightarrow E$.
\item \label{twee} On each stratum this map is a smooth symplectomorphism.
\item \label{drie}The image is $G\q U \subset E$.
\end{enumerate}
\begin{proof}
The proof of (\ref{een}) and (\ref{drie}) carry over verbatim from \cite{GJS}.  For (\ref{twee}), the smoothness of $f$ on all strata still follows from the same property of the section $s$.  To show that it is a symplectomorphism, we still follow the reasoning of \cite{GJS}, but without invoking an explicit formula for $s$.  Indeed, the symplectic structure $\omega_E$ on $E$ can be written as $\omega_E=d\beta_E$, where $(\beta_E)_v(w)=-\frac{1}{2}\Im\langle v,w\rangle$.  On the other hand, the symplectic structure on a stratum of $E_{\mathcal{R }}(T^*K)$ corresponding to a face $\sigma$ at a point $(\overline{1},\lambda)$ is given by 
$d\beta_{\sigma}$ (\cite[Lemma 4.6]{GJS}), where $(\beta_{\sigma})_{(\overline{1},\lambda)}(\overline{\xi},\mu)=\lambda(\xi)$.  Note that by equivariance we only need to compare $\beta_{\sigma}$ and $f^*(\beta_E)$ at such $(\overline{1},\lambda)$.  

We now have $$(f^*\beta_E)_{(\overline{1},\mu)}(\overline{\xi},\mu)=(\beta_E)_{s(\lambda)}(\mathcal{F}_*(\xi,\mu)),$$ and $$\mathcal{F}_*(\xi,\mu)=\frac{d}{dt}\Big|_{t=0}\Big(\exp(t\xi)s(\lambda+t\mu)\Big)=\xi_E(s(\lambda))+ \frac{d}{dt}\Big|_{t=0}s(\lambda+t\mu).$$

We therefore have 
$$\begin{aligned}
(f^*\beta_E)_{(\overline{1},\lambda)}(\overline{\xi},\mu)&= \frac{1}{2} \Im \Big\langle \xi_E(s(\lambda))+ \frac{d}{dt}\Big|_{t=0}s(\lambda+t\mu), s(\lambda)\Big\rangle \\
&= \frac{1}{2}\Im\Big\langle \xi_E(s(\lambda)),s(\lambda)\Big\rangle + \frac{1}{2}\frac{d}{dt}\Big|_{t=0}\underbrace{\Im\Big\langle s(\lambda+t\mu),s(\lambda)\Big\rangle}_{=0\text{ as } s(.)\in X_{\geq0}}\\ &= \mu(s(\lambda))(\xi)\\
&=\lambda(\xi)
\\
&=(\beta_{\sigma})_{(\overline{1},\lambda)}(\overline{\xi},\mu),
\end{aligned}$$
and hence $f^*\omega_E$ coincides with the symplectic structure on $E_{\mathcal{R}}(T^*K)$.
This proves (\ref{twee}).
\end{proof}

\end{theorem}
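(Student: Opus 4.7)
The strategy is to mimic the argument of \cite[Proposition 6.8]{GJS}, adapted to the general compact connected $K$ where the $\varpi \in \Pi$ need not be minimal generators and no closed-form expression for the section $s: -\Delta^{\vee} \to X_{\geq 0} \subset X \subset E$ is available. The statement has three parts, but (1) and (3) are algebraic/topological and (2) is the real content.

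For items (1) and (3), one observes that by Borel--Weil each $M_{\varpi}^*$ sits inside $\mathbb{C}[G]^U = \mathbb{C}[G\q U]$, and since $\Pi$ generates the monoid $\mathfrak{X}_G^+$, the combined duals $E^*$ generate $\mathbb{C}[G\q U]$ as a ring. This gives a closed $G$-equivariant embedding $G \q U \hookrightarrow E$ whose image is exactly $\mathcal{F}(K \times \Delta^{\vee})$, since by $K$-equivariance we may reduce to the image of $\Delta^{\vee}$, which lies in $X_{\geq 0}$. Injectivity of $f$ amounts to checking that the prescription $\mathcal{F}(k,\lambda) = k\cdot s(\lambda)$ descends from $K \times \Delta^{\vee}$ to $E_{\mathcal{R}}(T^*K)$, i.e.\ that the $K$-stabilizer of $s(\lambda)$ for $\lambda$ in the open face $\Delta^{\vee}_I$ is exactly $[K_I, K_I]$; this is a standard consequence of $s(\lambda) \in X_{\geq 0} \subset E^U$ together with weight considerations. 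Properness of $f$ and hence closedness of its image then follow from the properness of implosion and a degree-of-growth comparison on $\Delta^{\vee}$.

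For item (2), smoothness on each stratum is immediate from the stratumwise smoothness of $s$ (built into the toric structure of $X_{\geq 0}$). The symplectic comparison is carried out via primitives. On the ambient Hermitian space $E$ one has $\omega_E = d\beta_E$ with $(\beta_E)_v(w) = -\tfrac{1}{2}\Im\langle v,w\rangle$, and on each stratum $\sigma$ of $E_{\mathcal{R}}(T^*K)$ the symplectic form is $d\beta_\sigma$, with $(\beta_\sigma)_{(\overline{1},\lambda)}(\overline{\xi},\mu) = \lambda(\xi)$ (\cite[Lemma 4.6]{GJS}). By $(K \times \mathbb{T})$-equivariance of $f$ and of both forms, it suffices to verify $f^*\beta_E = \beta_\sigma$ at a base point $(\overline{1},\lambda)$. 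Decomposing the differential of $\mathcal{F}$ as
\[ \mathcal{F}_*(\overline{\xi},\mu) = \xi_E(s(\lambda)) + \tfrac{d}{dt}\big|_{t=0} s(\lambda + t\mu), \]
one computes $(f^*\beta_E)_{(\overline{1},\lambda)}(\overline{\xi},\mu)$ as a sum of two terms. The first, $\tfrac{1}{2}\Im\langle \xi_E(s(\lambda)), s(\lambda)\rangle$, equals $\mu_E(s(\lambda))(\xi) = \lambda(\xi)$ since $s$ is a section of the (negated) $\mathbb{T}$-moment map and the $K$-moment map agrees with $\mu_E$ on the $\mathbb{T}$-fixed locus. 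The second term is $\tfrac{1}{2}\tfrac{d}{dt}\big|_{0} \Im\langle s(\lambda+t\mu), s(\lambda)\rangle$, and this vanishes.

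\textbf{Main obstacle.} The crux, and the only place where the absence of an explicit formula for $s$ could cause trouble, is the vanishing of this second term. The key input is that $s$ takes values in the non-negative part $X_{\geq 0}$, which by construction lies inside the real linear span of the $v_\varpi$. Consequently $\langle s(\lambda + t\mu), s(\lambda)\rangle \in \mathbb{R}$ for all $t$, so its imaginary part is identically zero and its $t$-derivative vanishes. Thus $f^*\beta_E = \beta_\sigma$ as $1$-forms on $\sigma$, and taking exterior derivatives gives $f^*\omega_E = \omega_\sigma$. The reality of $s$ is therefore doing all of the work that the explicit formula \cite[(6.6)]{GJS} does in the simply-connected semisimple case.
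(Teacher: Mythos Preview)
Your proposal is correct and follows essentially the same route as the paper's proof: for (1) and (3) you defer to the argument in \cite{GJS} (adding a few more details than the paper, which simply says these parts carry over verbatim), and for (2) you use exactly the same primitive-form computation, reducing by $K$-equivariance to points $(\overline{1},\lambda)$, splitting $\mathcal{F}_*(\overline{\xi},\mu)$ into the $K$-action piece and the $s$-derivative piece, and killing the latter via the reality of $s(\cdot)\in X_{\geq 0}$. This is precisely the paper's argument, and you have correctly identified that the reality of the section is what replaces the explicit formula from \cite{GJS}.
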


\def\cprime{$'$}

\end{document}